\newtheorem{thm}{Theorem}[section]
\newtheorem{cor}[thm]{Corollary}
\newtheorem{conj}[thm]{Conjecture}
\newtheorem{lem}[thm]{Lemma}
\newtheorem{prop}[thm]{Proposition}
\theoremstyle{definition}
\theoremstyle{remark}
\numberwithin{equation}{section}
\newtheorem*{rep@theorem}{\rep@title} \newcommand{\newreptheorem}[2]{%
\newenvironment{rep#1}[1]{%
\def\rep@title{\bf #2 \ref{##1}}%
\begin{rep@theorem} }%
{\end{rep@theorem} } }
\newcommand{\bigO}[1]{O \left( #1 \right)}
\begin{document}

\title[{ Sidon Basis in polynomial rings over finite fields }]
{Sidon Basis in polynomial rings over \\ finite fields}

\author{Wentang Kuo}
\address{Department of Pure Mathematics \\
University of Waterloo \\
Waterloo, ON\\  N2L 3G1 \\
Canada}
\email{wtkuo@uwaterloo.ca}

\author{Shuntaro Yamagishi}
\address{Department of Pure Mathematics \\
University of Waterloo \\
Waterloo, ON\\  N2L 3G1 \\
Canada}
\email{syamagis@uwaterloo.ca}
\indent

\date{Revised on \today}

\begin{abstract}
Let $\mathbb{F}_q[t]$ denote the ring of polynomials over $\mathbb{F}_q$, the finite field of $q$ elements.
Suppose the characteristic of $\mathbb{F}_q$ is not $2$ or $3$.
In this paper, we prove an $\mathbb{F}_q[t]$-analogue of
results 
related to the conjecture of Erd\H{o}s on the existence of infinite Sidon
sequence of positive integers which is an asymptotic basis of order 3. We prove that
there exists a $B_2[2]$ sequence of non-zero polynomials in $\mathbb{F}_q[t]$,
which is an asymptotic basis of order $3$. We also prove that
for any $\varepsilon> 0$, there exists a sequence of non-zero polynomials in $\mathbb{F}_q[t]$,
which is a Sidon basis of order $3 + \varepsilon$. In other words, there exists a
sequence of non-zero polynomials in $\mathbb{F}_q[t]$ such that any
$n \in \mathbb{F}_q[t]$ of sufficiently large degree can be expressed as a sum of four elements
of the sequence, where one of them has a degree less than or equal to $\varepsilon \deg n.$
\end{abstract}

\subjclass[2010]{11K31, 11B83, 11T55}

\keywords{Sison basis, probabilistic methods, polynomial rings over finite fields }

\maketitle

\section{Introduction}
A sequence of positive integers $\omega$ is called a \textit{Sidon sequence},
if all the sums $a + a' \ (a, a' \in \omega, a \leq a')$ are distinct.
We say $\omega$ is an \textit{asymototic basis of order $g$},
if any positive integer $n$ sufficiently large can be expressed as a sum of $g$ elements of $\omega$.
If $\omega$ is a Sidon sequence and also an asymototic basis of order $g$, we say $\omega$ is
a \textit{Sidon basis of order $g$.}
An introduction to the topic is given in \cite{C}, which we paraphrase here.
It is known that there can not be a Sidon basis of order $2$. The following is
a conjecture of Erd\H{o}s \cite{E1, E2, E3}.
\begin{conj} \label{erdos}
There exists a sequence of positive integers, which is a Sidon basis
of order $3$.
\end{conj}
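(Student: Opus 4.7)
Since Conjecture~\ref{erdos} is a famous open problem of Erd\H{o}s, I would attempt the natural Erd\H{o}s--R\'enyi probabilistic construction: define a random set $\omega \subseteq \mathbb{Z}_{>0}$ by including each positive integer $n$ independently with probability $p_n = C\, n^{-2/3}(\log n)^{1/3}$, for a constant $C$ to be optimized. This normalization is forced by two opposing requirements: $\omega$ must be dense enough for the expected number of ordered three-term representations $\mathbb{E}[r_3(n)]$ to grow at least like $\log n$ (so a Borel--Cantelli argument can upgrade this to $r_3(n)\geq 1$ for all large $n$ almost surely), yet sparse enough for two-term sums to collide rarely.

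The first step would be to establish the basis-of-order-$3$ property: write $r_3(n) = \sum_{a+b+c=n}\mathbf{1}_{a,b,c\in\omega}$, verify by a routine convolution computation that $\mathbb{E}[r_3(n)] \gg \log n$, and apply a Janson- or Chernoff-type concentration inequality to get $\Pr[r_3(n)=0] \leq n^{-2}$ (say), whence Borel--Cantelli yields $r_3(n)\geq 1$ eventually, almost surely. The second step would be to enforce the Sidon property by a deletion argument: let $Q$ be the number of ``collision'' quadruples $(a,b,c,d)\in\omega^4$ with $a+b=c+d$ and $\{a,b\}\neq\{c,d\}$, bound $\mathbb{E}[Q]$ by a second convolution estimate, and remove one element from each such quadruple, hoping the surviving subset of $\omega$ is simultaneously Sidon and still a basis of order $3$.

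The hard part --- and the reason the conjecture remains open --- is precisely this deletion step. At the critical density $p_n \asymp n^{-2/3}$ dictated by the basis requirement, the expected number of collision quadruples with largest element $\asymp n$ is of the same order as the number of three-term representations of $n$ itself. Consequently, deleting one element from each collision wipes out a positive proportion of the representations of many integers, and the basis property is lost. This is exactly the obstruction that the present paper circumvents in the $\mathbb{F}_q[t]$ setting by weakening the conjecture in one of two ways: relaxing ``Sidon'' to the $B_2[2]$ condition (each two-term sum occurs at most twice), which doubles the permissible collision budget; or enlarging the order from $3$ to $3+\varepsilon$, so that an extra summand of small degree can absorb the representations destroyed by the deletion. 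A direct attack on the full integer conjecture would seemingly require either a structured, partially deterministic construction, or a refined dependent-random-choice scheme that achieves much stronger concentration for $r_3(n)$ while simultaneously killing every collision; I do not see how to do either with current methods, and this is where I would expect to get stuck.
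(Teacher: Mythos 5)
You have correctly recognized that the statement is an \emph{open conjecture} of Erd\H{o}s and not a theorem the paper claims to prove; the paper itself only establishes $\mathbb{F}_q[t]$-analogues of the weaker relaxations (a $B_2[2]$ basis of order $3$, and a Sidon basis of order $3+\varepsilon$), and you have accurately diagnosed the obstruction that makes the full conjecture hard: at the critical density $p_n \asymp n^{-2/3}$ forced by the basis-of-order-$3$ requirement, the expected number of Sidon-violating quadruples anchored near $n$ is of the same order as the number of three-term representations of $n$, so the deletion step destroys the basis property. This is exactly why the paper settles for $B_2[2]$ (which doubles the collision budget and lets $\mathcal{T}_n$ require \emph{two} extra collisions rather than one, pushing its expectation to a negative power of $q^{\deg n}$) or order $3+\varepsilon$ (where the extra low-degree summand lets $\gamma$ be taken slightly above $2/3$).

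One thing worth adding to your picture: the paper \emph{does} prove an exact analogue of the conjecture in a finite setting, namely Theorem \ref{thm 2.1 in C} for $\mathbb{G}_{4M_0}$, and it does so not probabilistically but by an explicit algebraic construction. Identifying $\mathbb{G}_{4M_0}$ additively with $\mathbb{F}_{q'}\times\mathbb{F}_{q'}$, the set $S=\{(x,x^2):x\in\mathbb{F}_{q'}\}$ is Sidon because $a-b$ determines $a$ and $b$ from the pair of equations, and it is a basis of order $3$ because the conic $x^2+y^2+(a-x-y)^2=b$ has $\approx q'$ rational points by Lang--Weil (or by explicit factorization in the reducible case). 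This is the ``structured, partially deterministic construction'' you speculated might be needed; it works in $\mathbb{F}_{q'}\times\mathbb{F}_{q'}$ precisely because the parabola gives a perfect Sidon set there, and it is then fed into the probabilistic machinery as the residue-class constraint $x\equiv S\pmod{n_0}$. No analogous perfect algebraic Sidon set is available in $\mathbb{Z}$, which is why the integer conjecture remains open. Your assessment is otherwise accurate and appropriately cautious.
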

There had been progress made towards this conjecture. J.-M. Deshoulliers and A. Plagne \cite{DP}
constructed a Sidon basis of order $7$, and S. Kiss \cite{K1} proved the existence of a Sidon basis
of order $5$. S. Kiss, E. Rozgonyi and C. S\'{a}ndor \cite{KRS} proved that there exists a Sidon basis
of order $4$.

The focus of this paper is on two theorems toward this conjecture proved in \cite{C}.
We introduce some notations before we state these theorems.
A sequence of positive integers $\omega$ is a $B_2[g]$\textit{ sequence}
if any integer $n$ has at most $g$ representations of the form
$n = a + a' \ (a, a' \in \omega, a \leq a')$. Conjecture \ref{erdos}
can be restated as follows: There exists a $B_2[g]$ sequence with $g=1$,
which is an asymptotic basis of order $3$. The following theorem was
proved in \cite{C}.

\begin{thm}[Theorem 1.2, \cite{C}]
\label{C 1}
There exists a $B_2[2]$ sequence,
which is an asymptotic basis of order $3$.
\end{thm}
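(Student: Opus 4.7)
My plan is the Erd\H{o}s--R\'enyi probabilistic method combined with an alteration step. I would define a random sequence $\omega \subset \mathbb{Z}^+$ by including each integer $n \geq 2$ independently with probability $p_n \asymp n^{-2/3}(\log n)^{\alpha}$, for some fixed $\alpha > 0$ to be chosen. A direct expectation computation gives
\[
\mathbb{E}[r_3(n)] \asymp n^2 p_n^3 \asymp (\log n)^{3\alpha}, \qquad \mathbb{E}[r_2(n)] \asymp n p_n^2 \asymp n^{-1/3}(\log n)^{2\alpha},
\]
where $r_k(n)$ denotes the number of representations of $n$ as a sum of $k$ elements of $\omega$. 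Thus the typical number of pair-representations decays, while the typical number of triple-representations grows.

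A second-moment computation (or a Janson-type inequality), applied dyadically with a Borel--Cantelli step, then yields $r_3(n) \geq \tfrac{1}{2}\mathbb{E}[r_3(n)]$ almost surely for all sufficiently large $n$, so that $\omega$ is already an asymptotic basis of order $3$ with probability one. In parallel, a fourth-moment tail bound gives $r_2(n) \leq 3$ almost surely for all large $n$, while a third-moment calculation confines the bad set $S = \{n : r_2(n) \geq 3\}$ to polylogarithmic density in $[1,N]$, namely $|S \cap [1,N]| \ll (\log N)^{6\alpha+1}$. For each $n \in S$ I would delete one element of $\omega$ participating in a representation $a+b = n$, producing a sparse deletion set $D$, and then check that the pruned $\omega' := \omega \setminus D$ is $B_2[2]$ by construction.

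The main obstacle is to show that $\omega'$ still satisfies $r_3^{\omega'}(n) \geq 1$ for every large $n$. The total loss is bounded by
\[
r_3(n) - r_3^{\omega'}(n) \;\leq\; \sum_{a \in D} r_2^{\omega}(n - a),
\]
and the naive bound $|D|\cdot \max r_2^{\omega}$ is polylogarithmic in $n$, which in principle can swamp $\mathbb{E}[r_3(n)] \asymp (\log n)^{3\alpha}$ regardless of how $\alpha$ is chosen. The resolution I would pursue is to argue that the locations of deletions are essentially uncorrelated with the sumset structure of any fixed $n$, so that generically $r_2^{\omega}(n-a) = 0$ for $a \in D$. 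Quantifying this -- coupling the concentration of $r_3(n)$ in Step~2 with a local-in-$n$ analysis of $r_2^{\omega}$ near the deleted values, and choosing the deletion rule to exploit the randomness of $D$ rather than worst-case position -- is the delicate heart of the argument, and is where I would concentrate the bulk of the effort.
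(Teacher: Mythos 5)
Your framework — random inclusion with probability $p_n \asymp n^{-\gamma}$, showing triple-representations grow while pair-representations decay, then an alteration step — is the right starting point, and you have correctly put your finger on the real difficulty: bounding the loss $r_3(n)-r_3^{\omega'}(n)$ after pruning. However, your proposed resolution (``the locations of deletions are essentially uncorrelated with the sumset structure of any fixed $n$'') is vague where the argument needs to be hardest, and it is not how the proof actually proceeds. Three concrete ingredients are missing.

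First, the exponent. You take $\gamma=2/3$ with polylogarithmic corrections, so that $\mathbb{E}[r_3(n)] \asymp (\log n)^{3\alpha}$. The actual proof takes $\gamma$ strictly below the critical exponent (here $\gamma=7/11$), so that $\mathbb{E}[r_3(n)] \gg n^{1/11}$ grows like a genuine power. This extra margin is essential: the deletion step costs $O(1)$ per $n$, and a power beats a constant much more robustly than a polylog beats a polylog of possibly larger degree.

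Second, and more fundamentally, the random sequence is not unrestricted: it is supported on residue classes $x\equiv S\pmod{n_0}$, where $S$ is a Sidon set in $\mathbb{Z}/n_0\mathbb{Z}$ (in the paper, in $\mathbb{G}_N\cong \mathbb{F}_{q'}\times\mathbb{F}_{q'}$) that is also an additive basis of order $3$ there. This congruence structure plays two roles: it guarantees that for every large $n$ there exist representations $n=x_1+x_2+x_3$ with the $x_i$ in distinct residue classes (so the family $\mathcal{Q}_n$ being counted is genuinely nonempty in expectation), and, crucially, the Sidon property modulo $n_0$ rigidly constrains any configuration $x_1+x_4=x_5+x_6=x_7+x_8$: the residues of the two summands in each pair must match up, so ``bad'' tuples live in a very small and structured family $\mathcal{T}_n$. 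Without this restriction the combinatorics you need to control is far looser.

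Third, the actual mechanism to control the loss is neither a decorrelation heuristic nor a global bound on $|D|\cdot\max r_2^{\omega}$. Instead one maps each destroyed triple $\{x_1,x_2,x_3\}\in\mathcal{Q}_n(\omega)$ injectively into an $8$-tuple $\overline{x}\in\mathcal{T}_n(\omega)$ encoding the offending relation $x_1+x_4=x_5+x_6=x_7+x_8$, getting the deterministic inequality $|\mathcal{Q}_n(\omega_{B_2[2]})|\ge|\mathcal{Q}_n(\omega)|-|\mathcal{T}_n(\omega)|$, and then proves that $|\mathcal{T}_n(\omega)|$ is bounded by an absolute constant \emph{uniformly in $n$} with positive probability. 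The tool that achieves this uniformity is the vectorial sunflower lemma: one shows $\mathcal{T}_n(\omega)$ cannot contain a vectorial sunflower with many petals, case-by-case over the possible core sets, by reducing each case to showing that certain auxiliary families ($\mathcal{U}_r,\mathcal{V}_r,\mathcal{W}_r$, all of whose expectations tend to zero) almost surely contain no large disjoint collection of vectors. A union bound over $n$ and $r$, possible because the relevant expectations decay polynomially, then closes the argument. This local-in-$n$, uniform, combinatorial bound is the missing idea; the decorrelation route does not give it to you, and with your choice of $\gamma$ at the critical exponent it is not clear anything would.
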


For any $\varepsilon$ such that $0 < \varepsilon < 1$, we say that $\omega$ is an
\textit{asymptotic basis of order $g + \varepsilon$} if any positive integer $n$ sufficiently
large can be represented in the following form,
$$
n = a_1 + ... + a_{g+1} \ (a_1, ..., a_{g+1} \in \omega) \  \ \text{ and } \  \min_{1 \leq i \leq g+1} a_i \leq n^{\varepsilon}.
$$
The following theorem was also proved in \cite{C}.

\begin{thm}[Theorem 1.3, \cite{C}]
\label{C 2}
For any $\varepsilon> 0$,
there exists a Sidon basis of order $3 + \varepsilon$.
\end{thm}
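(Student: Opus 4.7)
\emph{Proof plan.} The plan is to employ the Erd\H{o}s--R\'enyi probabilistic method. I would first construct a random set $\Omega \subset \mathbb{Z}_{>0}$ by including each integer $m$ independently with probability $p_m = c\, m^{-\alpha}$, where $c > 0$ and $\alpha$ is chosen slightly larger than $2/3$, with the precise value depending on $\varepsilon$. The choice $\alpha > 2/3$ ensures that the expected number of Sidon violations (ordered quadruples $(a,b,c,d) \in \Omega^4$ with $a+b=c+d$ and $\{a,b\} \neq \{c,d\}$) in $[1,N]$ is of order $N^{3-4\alpha} = o(N^{1-\alpha})$, while linearity of expectation combined with a Chernoff bound gives $|\Omega \cap [1,N]| \sim \frac{c}{1-\alpha}\, N^{1-\alpha}$ almost surely. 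I would then delete one element from each colliding quadruple to produce a Sidon subset $\omega \subseteq \Omega$ of essentially the same density.

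The heart of the argument is to show that, with probability $1$, for every sufficiently large $n$ the representation count
$$
R(n) \ = \ \#\left\{(a_0,a_1,a_2,a_3) \in \omega^4 \ : \ a_0 + a_1 + a_2 + a_3 = n, \ a_0 \leq n^{\varepsilon}\right\}
$$
is positive. A first-moment calculation gives $\mathbb{E}[R(n)]$ of magnitude roughly $n^{\varepsilon(1-\alpha) + (2-3\alpha)}$, which can be made to grow like $n^{\delta}$ for some $\delta > 0$ by taking $\alpha$ sufficiently close to $2/3$ relative to $\varepsilon$. I would then apply Janson's inequality (or a direct second-moment / Chebyshev bound) to obtain a tail estimate of the form $\mathbb{P}(R(n)=0) \leq \exp(-n^{\delta'})$ for some $\delta' > 0$. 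A Borel--Cantelli argument then shows that almost surely only finitely many $n$ are unrepresented, and these exceptional $n$ can be accommodated by adjoining a finite set of integers, chosen to preserve the Sidon property.

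The main obstacle will be the concentration step for $R(n)$: one must control the second moment of $R(n)$ by bounding the contribution of correlated pairs of representations that share one or more summands, and the resulting tail bound must be strong enough to survive the union bound over all $n$. A secondary difficulty is verifying that the Sidon-correction does not destroy too many representations --- one would need another first-moment estimate showing that, for every $n$, only a vanishing proportion of the representations counted by $\mathbb{E}[R(n)]$ involve an element deleted in the correction step, so that $R(n)$ remains positive for the corrected set $\omega$.
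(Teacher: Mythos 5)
This statement (Theorem~\ref{C 2}) is cited from Cilleruelo~\cite{C} rather than proved in the present paper, but the paper's proof of Theorem~\ref{main 2} is its exact $\mathbb{F}_q[t]$-analogue and follows the same strategy as Cilleruelo's argument, so it is the right benchmark for your sketch. Your overall plan --- an Erd\H{o}s--R\'enyi random set with $p_m \asymp m^{-\alpha}$ for $\alpha$ slightly above $2/3$, a first-moment count of representations with one small summand, Janson's inequality plus Borel--Cantelli, and a final Sidon correction --- is indeed the right skeleton, and your exponent $\varepsilon(1-\alpha)+(2-3\alpha)$ matches the paper's $\frac{2\varepsilon^2}{9+9\varepsilon}$ when $\alpha=\gamma=\frac{2}{3}+\frac{\varepsilon}{9+9\varepsilon}$.

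However, there is a genuine gap in how you propose to finish, and it is precisely the part the paper devotes the most machinery to. You say the Sidon-correction can be handled by ``another first-moment estimate showing that only a vanishing proportion of the representations counted by $\mathbb{E}[R(n)]$ involve an element deleted.'' A first-moment bound of this kind only controls the \emph{expected} number of destroyed representations; it does not give, for a single realization $\omega$, a simultaneous bound for \emph{every} large $n$. Markov plus a union bound over $n$ fails, because the expectations $\mathbb{E}(|\mathcal{B}_n(\omega)|)$ decay like $q^{-\delta \deg n}$ while the number of $n$ of degree $d$ grows like $q^{d}$, so the sum diverges. The paper instead establishes (Proposition~\ref{upper bound B_n}) that with probability $1-o_M(1)$ the number of bad $7$-tuples $|\mathcal{B}_n(\omega)|$ is bounded by an absolute constant for \emph{all} $n$ simultaneously. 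This is proved via the vectorial sunflower lemma (Lemma~\ref{vs lem}) and Proposition~\ref{K-dsv 1}: one rules out $K$-d.s.v.'s in the auxiliary families $\mathcal{U}_r, \mathcal{U}'_r, \mathcal{V}_r, \mathcal{V}'_r$, then invokes the sunflower bound to cap $|\mathcal{B}_n(\omega)|$. Without something of this strength, the correction step does not close.

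A second, related ingredient you omit is the restriction of the random set to a residue class $x \equiv S \pmod{n_0}$ (in the integer version, $x \equiv S \pmod{M}$), where $S$ is a fixed Sidon set that is simultaneously a basis of order $4$ in the quotient (Corollary~\ref{cor to thm}). This congruence structure forces the four summands of a representation into distinct residue classes, which is what makes the case analysis in the sunflower argument tractable and guarantees, after the lifting, that the survivors really are distinct elements. A plain unrestricted Erd\H{o}s--R\'enyi random set does not automatically have this property, and the bookkeeping becomes substantially harder without it. Finally, the ``adjoin a finite set to cover the exceptional $n$'' step is unnecessary here, since asymptotic basis of order $3+\varepsilon$ only requires representation of sufficiently large $n$; but if you did need it, you would have to check that the adjoined elements preserve the Sidon property, which is not automatic.
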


Let $\mathbb{F}_q$ be the finite field of $q$ elements. In this paper, we follow the approach of \cite{C} and prove
$\mathbb{F}_q[t]$-analogue of Theorems \ref{C 1} and \ref{C 2}. Let $\omega$ be a sequence of
non-zero polynomials in $\mathbb{F}_q[t]$.
We define \textit{Sidon sequence} and \textit{$B_2[g]$ sequence}
for sequences of non-zero polynomials in $\mathbb{F}_q[t]$ in a similar manner as for sequences of positive integers.
We say $\omega$ is an \textit{asymototic basis of order $g$} if any $n \in \mathbb{F}_q[t]$ with $\deg n$ sufficiently large can be expressed as a sum of $g$ elements of $\omega$. Similarly, if $\omega$ is a Sidon sequence and also an asymototic basis of order $g$, we say $\omega$ is
a \textit{Sidon basis of order $g$.}
We prove the following theorem, which is an analogue of Theorem \ref{C 1}.
\begin{thm}
\label{main 1}
Let $p$ be a prime, $p>3$, and $q = p^h \ ( h \in \mathbb{N})$.
Then there exists a $B_2[2]$ sequence of non-zero polynomials in $\mathbb{F}_q[t]$,
which is an asymptotic basis of order $3$.
\end{thm}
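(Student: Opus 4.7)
The plan is to adapt the probabilistic method of \cite{C} to the function field setting. We shall construct a random sequence $\omega$ of non-zero polynomials by including each $a \in \mathbb{F}_q[t]$ independently with a probability $p(a)$ depending only on $\deg a$. With $q^{\deg a}$ playing the role of $|a|$, the natural choice is $p(a)$ of order $(\deg a / q^{\deg a})^{1/3}$ up to a constant, calibrated so that if $r_k(n)$ denotes the number of ordered $k$-tuples from $\omega$ summing to $n$, then $\mathbb{E}[r_3(n)] \to \infty$ while $\mathbb{E}[r_2(n)]$ stays bounded as $\deg n \to \infty$.

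First I would carry out the expectation calculations. The key input is the elementary convolution identity for $\mathbb{F}_q[t]$: the number of pairs $(a_1,a_2)$ of non-zero polynomials with $\deg a_i = k$ and $a_1+a_2 = m$ is either $0$, $q^{k-1}(q-1)$, or $q^k - q^{k-1}$, depending on $\deg m$ and the leading coefficient of $m$ relative to $k$. This makes $\mathbb{E}[r_3(n)]$ and $\mathbb{E}[r_2(n)]$ reduce to clean power sums in the degrees. Next I would establish two concentration statements: (i) for every fixed $n$ of large degree, $r_3(n)$ is concentrated around its mean with failure probability $o(q^{-\deg n})$, so that a union bound over all $n$ of each degree yields almost surely $r_3(n) \geq 1$ for all $n$ with $\deg n$ large; and (ii) almost surely every polynomial has at most two representations as $a+a'$ with $a, a' \in \omega$, $a \leq a'$ in some fixed ordering, i.e.\ $\omega$ is $B_2[2]$. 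Both statements are proved by second-moment or Janson-type arguments of the form used in \cite{C}, possibly preceded by a deterministic pruning that removes a negligible fraction of $\omega$ to enforce the $B_2[2]$ bound exactly. Combining (i) and (ii) gives Theorem \ref{main 1}.

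The main obstacle is the concentration of $r_3(n)$. The indicator variables underlying it are strongly dependent, and one needs tail bounds sharp enough that a union bound over the $q^{\deg n}$ polynomials of each degree still succeeds; this requires careful control of the variance, and especially of the cross terms coming from triples sharing a common summand. In the integer case of \cite{C} this is the heart of the proof. Here the function field setting is an advantage: convolution counts of the form $\#\{a_1+a_2 = m : \deg a_i \leq K\}$ are essentially uniform in $m$, unlike their integer analogues which fluctuate with the divisor function, so \cite{C}'s variance estimates should transfer with divisor sums replaced by exact powers of $q$. The hypothesis that the characteristic is not $2$ or $3$ enters in avoiding degeneracies when coincidences arise among the summands in a triple representation. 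Once the requisite variance bound is in place, a Borel--Cantelli argument applied degree by degree delivers the conclusion.
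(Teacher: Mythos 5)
Your overall plan (random sequence, Janson concentration, pruning to $B_2[2]$, Borel--Cantelli) is aligned with the paper's, but it has a concrete quantitative error and omits two structural ingredients that are in fact the crux of the argument.

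First, the probability exponent is wrong. You propose $p(a)$ of order $(\deg a / q^{\deg a})^{1/3}$, i.e.\ $q^{-(1/3)\deg a}$ up to polynomial factors. With $p(a)\approx q^{-\gamma\deg a}$, a triple count gives $\mathbb{E}[r_3(n)]\approx q^{(2-3\gamma)\deg n}$ and a pair count gives $\mathbb{E}[r_2(n)]\approx q^{(1-2\gamma)\deg n}$. To make $r_3$ grow while $r_2$ stays bounded one needs $\tfrac12<\gamma<\tfrac23$; the paper takes $\gamma=\tfrac{7}{11}$. With your $\gamma=\tfrac13$ the pair count grows like $q^{\deg n/3}$, so the resulting sequence is nowhere near $B_2[2]$ and no pruning of a ``negligible fraction'' can fix that --- you would be removing almost everything. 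This is not a typo you can wave away; the entire calibration of the proof depends on it.

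Second, you never introduce the congruence restriction that drives the pruning analysis. The paper does not sample from all non-zero polynomials: it first constructs (Theorem \ref{thm 2.1 in C}) a Sidon set $S\subseteq\mathbb{G}_{4M_0}$ that is also an additive basis of order $3$ in $\mathbb{G}_{4M_0}$, and then only allows $x\in\omega$ with $x\equiv S\pmod{n_0}$. This serves two purposes that your proposal does not replace. It guarantees that every $n$ actually has triples of allowable residues summing to it (so $\mathcal{Q}_n$ is non-empty), and --- crucially --- because $S$ is Sidon, any coincidence $x_1+x_4=x_5+x_6=x_7+x_8$ among sampled elements forces a rigid alignment of residues ($x_1\equiv x_5\equiv x_7$ and $x_4\equiv x_6\equiv x_8\pmod{n_0}$). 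That rigidity is what makes the family $\mathcal{T}_n$ of ``representations destroyed by the $B_2[2]$-lifting process'' small enough to control. Without this structure, the elements you prune can wipe out all triple representations of some $n$, and your sketch offers no mechanism to prevent this.

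Third, you locate the difficulty in concentrating $r_3(n)$, but that part is a standard Janson application once the first and second moments of $|\mathcal{Q}_n(\omega)|$ are in hand. The genuinely delicate step is showing that after removing elements to force $B_2[2]$, at least one representation of each large $n$ survives, which is proved by bounding $|\mathcal{T}_n(\omega)|$ uniformly (Proposition \ref{upper bound T_n}). That bound does not follow from a second-moment argument; the paper uses a vectorial sunflower lemma (Lemma \ref{vs lem}) together with a case analysis over the possible coincidence patterns of the $8$-tuple defining $\mathcal{T}_n$, and a disjoint-supports argument (Proposition \ref{K-dsv 1}) whose success again hinges on having $\gamma>\tfrac12$ so that the relevant expectations decay in $\max\{\deg n,M\}$. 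Your proposal neither identifies this step nor supplies a substitute for the sunflower machinery, so as written it does not establish the $B_2[2]$ property of the final sequence.

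Finally, a smaller point: the hypothesis $p>3$ is needed already to build the Sidon basis of order $3$ in $\mathbb{G}_{4M_0}$ (the proof of Theorem \ref{thm 2.1 in C} divides by $2$ and uses that the form $x^2+y^2+xy$ factors with distinct roots, which fails in characteristics $2$ and $3$); the technical lemmas of Section \ref{appendix} only need $p\neq2$. Your explanation (``avoiding degeneracies among the summands'') is too vague to be checked.
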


For any $\varepsilon$ such that $0 < \varepsilon < 1$, we say that $\omega$ is an
\textit{asymptotic basis of order $g + \varepsilon$} if any polynomial $n \in \mathbb{F}_q[t]$ with $\deg n$ sufficiently
large can be represented in the following form,
$$
n = a_1 + ... + a_{g+1} \ (a_1, ..., a_{g+1} \in \omega) \  \ \text{ and } \  \min_{1 \leq i \leq g+1} \deg a_i \leq \varepsilon \deg n.
$$
We also prove the following theorem, which is an analogue of Theorem \ref{C 2}.
\begin{thm}
\label{main 2}
Let $p$ be a prime, $p>3$, and $q = p^h \ ( h \in \mathbb{N})$.
For any $\varepsilon> 0$, there exists a sequence of non-zero polynomials in $\mathbb{F}_q[t]$,
which is a Sidon basis of order $3 + \varepsilon$.
\end{thm}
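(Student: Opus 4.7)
The plan is to derive Theorem \ref{main 2} from Theorem \ref{main 1} by combining a probabilistic Sidon extraction with the basis of order $3$ property. Let $\mathcal{B}$ denote a $B_2[2]$ asymptotic basis of order $3$ guaranteed by Theorem \ref{main 1}, and write $\mathcal{B}(k) = \{b \in \mathcal{B} : \deg b < k\}$. Since $\mathcal{B}$ is a basis of order $3$, a standard counting gives $|\mathcal{B}(k)| \gg q^{k/3}$, and since $\mathcal{B}$ is $B_2[2]$ each pair sum $b + b'$ has at most two unordered representations in $\mathcal{B}$, so every Sidon violation in $\mathcal{B}$ corresponds to a single quadruple $\{b_1,b_2,b_3,b_4\}$ satisfying $b_1 + b_2 = b_3 + b_4$ with $\{b_1,b_2\} \neq \{b_3,b_4\}$.

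First I would construct a random Sidon subsequence $\mathcal{S} \subseteq \mathcal{B}$ by removing, for each such collision quadruple, one of its four elements chosen independently and uniformly at random. The resulting $\mathcal{S}$ is Sidon by construction. To check that $\mathcal{S}$ is still an asymptotic basis of order $3+\varepsilon$, I would fix $n \in \mathbb{F}_q[t]$ of sufficiently large degree $k$, and for each $a \in \mathcal{B}$ with $\deg a \leq \varepsilon k$ apply the basis of order $3$ property of $\mathcal{B}$ to produce $b_1,b_2,b_3 \in \mathcal{B}$ with $n - a = b_1 + b_2 + b_3$. Summing over the $\gg q^{\varepsilon k/3}$ admissible choices of $a$ produces at least $N(n) \gg q^{\varepsilon k/3}$ quadruples $(a,b_1,b_2,b_3) \in \mathcal{B}^4$ with sum $n$, and the expected number whose entries all survive the random thinning is a positive fraction of $N(n)$.

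The main obstacle will be a concentration estimate showing that, for all but finitely many $n$, at least one such quadruple actually survives. The survival indicators are not independent, since distinct quadruples may share elements and those elements may themselves lie in several collisions whose outcomes are jointly drawn. To handle this I would combine either a second-moment estimate — bounding the number of quadruple-pairs sharing one, two, or three elements via the $B_2[2]$ hypothesis and the sum-set structure of $\mathcal{B}$ — or a Chernoff / martingale-type inequality, with a Borel--Cantelli argument summing the failure probabilities over all $n$ of each fixed degree $k$ and then over $k$. If the failure probability for a given $n$ can be pushed below $q^{-(1+\delta)k}$, then almost surely only finitely many $n$ fail, and $\mathcal{S}$ is almost surely a Sidon basis of order $3+\varepsilon$, proving the theorem. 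This is the direct $\mathbb{F}_q[t]$-analogue of the probabilistic argument used to establish Theorem \ref{C 2} in \cite{C}, and the structural facts about $\mathbb{F}_q[t]$ exploited in the proof of Theorem \ref{main 1} are precisely what is needed to make the variance bound effective here.
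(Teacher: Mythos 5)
Your proposal is a genuinely different route from the paper, and it has a gap that cannot be repaired without substantially new input.

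The paper does \emph{not} deduce Theorem~\ref{main 2} from Theorem~\ref{main 1}. It starts over with a fresh random sequence drawn from $\mathcal{S}_M(\gamma; S \bmod n_0)$ using a \emph{sparser} exponent $\gamma = \tfrac{2}{3} + \tfrac{\varepsilon}{9+9\varepsilon}$ (versus $\gamma=\tfrac{7}{11}<\tfrac{2}{3}$ in the $B_2[2]$ case) and a set $S$ satisfying Corollary~\ref{cor to thm} (order~$4$) rather than Theorem~\ref{thm 2.1 in C} (order~$3$). The Sidon property is then obtained by a \emph{deterministic} lifting process --- remove every $a$ that participates in any $a+b=c+d$ with $\{a,b\}\neq\{c,d\}$ --- and the basis property is preserved because the expected number $\mathbb{E}(|\mathcal{B}_n(\omega)|)$ of witnesses that could destroy a representation of $n$ is controlled (Lemma~\ref{exp Bn}, Proposition~\ref{upper bound B_n}) while $|\mathcal{R}_n(\omega)|\to\infty$.

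The gap in your approach is the claim that, after the random thinning of $\mathcal{B}$, ``the expected number whose entries all survive \dots is a positive fraction of $N(n)$.'' This would require that each element of $\mathcal{B}$ participates in $O(1)$ (or at least $o(\deg n)$) collision quadruples, so its survival probability is bounded below. Theorem~\ref{main 1} gives no such control. Indeed, a back-of-the-envelope computation for the density $q^{-\gamma \deg x}$ with $\gamma = 7/11$ shows that the expected number of pairs $(c,d)\in\omega^2$ with $c+d = x+b'$ for a fixed degree-$k$ element $x$, summed over $b'\in\omega$ of degree $<k$, is of order $q^{(2-3\gamma)k} = q^{k/11}$. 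That is, a typical degree-$k$ element of the $B_2[2]$ sequence sits in $\sim q^{k/11}$ distinct collision quadruples. Under your independent uniform removal, each such element survives with probability roughly $(3/4)^{q^{k/11}}$, which is super-exponentially small, and the expected count of surviving $(a,b_1,b_2,b_3)$ is nowhere near a positive fraction of $q^{\varepsilon k/3}$ --- it tends to $0$. The second-moment or martingale machinery you propose cannot rescue this, because the first moment already vanishes.

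This also explains why the paper cannot simply reuse Theorem~\ref{main 1}: the $B_2[2]$ sequence with $\gamma=\tfrac{7}{11}$ is too dense to be thinned to a Sidon set while retaining the basis property, and a sparser $\gamma > \tfrac{2}{3}$ is needed to make the number of Sidon collisions per representation bounded. With that sparser density the basis of order $3$ property is lost, which is exactly why one only obtains order $3+\varepsilon$ and why the ``small fourth summand'' (the $\deg a \leq \varepsilon \deg n$ constraint in $\mathcal{R}_n$) has to be baked into the construction from the outset rather than bolted on afterward. To salvage an argument in your spirit you would have to (i) control the number of collision quadruples through each element of the underlying sequence, which forces you back to a probabilistic construction with tailored $\gamma$, and (ii) re-derive the basis estimates at that sparser density, at which point you have essentially reproduced the paper's Section~\ref{sec proof of main 2}.
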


In \cite[Theorem 1.1]{C}, J. Cilleruelo proved Conjecture \ref{erdos} in the setting of $\mathbb{Z} / (M \mathbb{Z})$ for $M$ sufficiently large. For each $N \in \mathbb{N}$, let $\mathbb{G}_N = \{ f \in \mathbb{F}_q[t] : \deg f < N \}$. We also prove a $\mathbb{G}_N$-analogue of Conjecture \ref{erdos} when $N$ is a sufficiently large multiple of $4$, and the characteristic of $\mathbb{F}_q$ is not $2$ or $3$.

\begin{thm}
	\label{thm 2.1 in C}
	Let $p$ be a prime, $p>3$, and $q = p^h \ ( h \in \mathbb{N})$.
	Then for $M_0 \in \mathbb{N}$ sufficiently large, there exists a Sidon set $S = S(q, M_0)$ in $\mathbb{G}_{4M_0}  \subseteq \mathbb{F}_q[t]$
	such that the following holds.
	Given any $g \in \mathbb{G}_{4M_0}$, there exist
	$s_1, s_2, s_3 \in S$ with $s_i \not = s_j \ (i \not = j)$ such that
	$$
	s_1 + s_2 + s_3 = g.
	$$
\end{thm}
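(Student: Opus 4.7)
The plan is to imitate Cilleruelo's construction of a Sidon basis in $\mathbb{Z}/M\mathbb{Z}$ from Theorem~2.1 of \cite{C}, with the prime modulus replaced by a monic irreducible polynomial of degree $2M_0$ in $\mathbb{F}_q[t]$. Fix such a $P$ (which exists in every positive degree), let $\rho : \mathbb{F}_q[t] \to \mathbb{G}_{2M_0}$ denote the remainder upon division by $P$, and identify $\mathbb{G}_{2M_0}$ additively with the finite field $F := \mathbb{F}_q[t]/(P) \cong \mathbb{F}_{q^{2M_0}}$. For each $a \in \mathbb{G}_{2M_0}$, set
\[
s(a) := a + P \cdot \rho(a^{2}) \in \mathbb{G}_{4M_0},
\]
and let $S := \{s(a) : a \in \mathbb{G}_{2M_0}\}$, a set of size $q^{2M_0}$ (injectivity of $s$ is visible from the first summand).

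To check that $S$ is Sidon, observe that any equation $s(a) + s(b) = s(c) + s(d)$ separates by $P$-adic digit into $a + b = c + d$ in $\mathbb{G}_{2M_0}$ and $\rho(a^{2} + b^{2}) = \rho(c^{2} + d^{2})$, hence $a^{2} + b^{2} = c^{2} + d^{2}$ in $F$. Because $\mathrm{char}(\mathbb{F}_q) \neq 2$, the identity $2ab = (a+b)^{2} - (a^{2} + b^{2})$ then gives $ab = cd$, so $\{a, b\}$ and $\{c, d\}$ are the roots of the same monic quadratic and must coincide.

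For the three-basis property, write $g \in \mathbb{G}_{4M_0}$ as $g = g_0 + P\,g_1$ with $g_0, g_1 \in \mathbb{G}_{2M_0}$. The condition $s(a_1) + s(a_2) + s(a_3) = g$ decouples into
\[
a_1 + a_2 + a_3 = g_0 \quad \text{and} \quad a_1^{2} + a_2^{2} + a_3^{2} = g_1,
\]
where the second equation is taken in $F$. Eliminating $a_3 = g_0 - a_1 - a_2$ transforms this into an affine conic in $(a_1, a_2)$ over $F$ whose defining ternary quadratic form has discriminant proportional to $g_0^{2} - 3 g_1$. When $g_0^{2} \neq 3 g_1$ the conic is smooth; Chevalley--Warning furnishes an $F$-point, so the conic is rational, giving $q^{2M_0} + 1$ projective and hence $q^{2M_0} - 1$ affine $F$-points (both points at infinity lie in $F$ because $2M_0$ is even, forcing $3 \mid q^{2M_0} - 1$ and $X^{2} + X + 1$ to split over $F$). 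When $g_0^{2} = 3 g_1$, using $\mathrm{char}(\mathbb{F}_q) \neq 3$ a shift transforms the conic into $u^{2} + uv + v^{2} = 0$, which splits into two lines over $F$ for the same cube-root-of-unity reason and contains $2q^{2M_0} - 1$ points. Removing the $O(1)$ points that violate any diagonal constraint $a_i = a_j$ still leaves $\geq q^{2M_0} - O(1)$ admissible ordered triples, producing a valid representation once $M_0$ is large enough.

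The main technical step is the conic analysis of the previous paragraph, which splits into the smooth and degenerate cases and in each relies on point-counting for conics over a finite field together with the cube-root-of-unity structure in $F$. The hypothesis $\mathrm{char}(\mathbb{F}_q) > 3$ enters precisely at these points: characteristic not $2$ lets us recover elementary symmetric functions from power sums in the Sidon argument, and characteristic not $3$ both controls the discriminant $g_0^{2} - 3 g_1$ and guarantees that the primitive cube roots of unity lie in $F$.
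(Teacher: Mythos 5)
Your proof is correct, and while the overall strategy is the same as the paper's (realize $\mathbb{G}_{4M_0}$ additively as $F\times F$ for a finite field $F$ of order $p^{2hM_0}$, take the parabola $\{(x,x^2)\}$, check the Sidon property via elementary symmetric functions, and count solutions of $a_1+a_2+a_3=g_0$, $a_1^2+a_2^2+a_3^2=g_1$), the key point-counting step is handled differently. The paper first argues that if the homogenized conic $F(x,y,z)$ factors over $\overline{F}$ then it already factors over $F$, and in the irreducible case invokes the Lang--Weil theorem to get $|F|+O(1)$ rational points. You instead compute the discriminant of the ternary form explicitly as (a multiple of) $g_0^2-3g_1$; in the smooth case you produce a rational point via Chevalley--Warning and use the standard conic parametrization to get \emph{exactly} $|F|+1$ projective points, and in the degenerate case $g_0^2=3g_1$ you complete the square to rewrite the form as $u^2+uv+v^2$ and exhibit two rational lines (using $3\mid |F|-1$, which holds since $p\neq 3$ and $|F|$ is an even power of $p$). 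The same divisibility fact controls the two points at infinity, giving the exact affine count. Your route is both more elementary (avoiding Lang--Weil in favor of textbook conic geometry and Chevalley--Warning) and sharper (exact counts rather than $O(1)$ error terms), at the cost of a slightly longer case analysis; the paper's appeal to Lang--Weil is more hands-off but weaker. Your $P$-adic decomposition of $\mathbb{G}_{4M_0}$ via a monic irreducible of degree $2M_0$ is also a pleasantly intrinsic alternative to the paper's passage through $(\mathbb{Z}/p\mathbb{Z})^{4hM_0}$, though the resulting field $F$ is of course isomorphic in both treatments.
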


The organization of the paper is as follows.
In Section \ref{second sec}, we prove Theorem \ref{thm 2.1 in C} and its corollary, which become useful in the proof of Theorems \ref{main 1} and \ref{main 2}.
We introduce notation and results from probabilistic methods in Section \ref{prelim}.
We then prove our main results Theorems \ref{main 1} and \ref{main 2} in Sections \ref{sec proof of main 1} and \ref{sec proof of main 2},
respectively. We obtain several bounds in Section \ref{appendix}. These bounds are used in Sections \ref{calculations 1} and \ref{calculations 2}, where we present calculations of estimates used in Sections \ref{sec proof of main 1} and \ref{sec proof of main 2}, respectively. We note that this paper
involves a considerable amount of computation, some of which is similar to that of \cite{C}.
In an effort to keep the paper concise,
we omitted the details of some calculations, most notably in
Sections \ref{sec proof of main 2} and \ref{calculations 2} as they are similar to that of Sections \ref{sec proof of main 1} and \ref{calculations 1},
respectively. Also, we assume that the characteristic of $\mathbb{F}_q$ is not $2$ or $3$ for the remainder of the paper, unless
it is explicitly stated otherwise.



\section{Proof of Conjecture \ref{erdos} for $\mathbb{G}_{N}$}
\label{second sec}

Let $G$ be an abelian group. For any subset $A \subseteq G$ and $x \in G$,
we denote $r_{A-A}(x)$ to be the number of representations of the form $x = a - a' \  (a, a' \in A)$.
We say that a set $A \subseteq G$ is a \textit{Sidon set} if $r_{A-A}(x) \leq 1$
whenever $x \not = 0$. This condition is equivalent to saying that the representation of
elements of $G$ as a sum of two elements of $A$ is unique if it exists. In other words,
if for some $a,b,c,d \in A$ we have $a+b = c+d$, then either we have
$a=c$, $b=d$ or $a=d$, $b=c$.
Recall from above that we defined  $\mathbb{G}_N = \{ f \in \mathbb{F}_q[t] : \deg f < N \}$, which is a group under addition.
In this section, we prove Theorem \ref{thm 2.1 in C} and Corollary \ref{cor to thm}, which
are $\mathbb{F}_q[t]$-analogue of \cite[Theorem 2.1]{C} and \cite[Corollary 2.1]{C}, respectively.

We recall the statement of Theorem \ref{thm 2.1 in C}, which confirms Conjecture \ref{erdos} for $\mathbb{G}_N$
when $N$ is a sufficiently large multiple of $4$, and the characteristic of $\mathbb{F}_q$ is not $2$ or $3$.
\begin{reptheorem}{thm 2.1 in C}
Let $p$ be a prime, $p>3$, and $q = p^h \ ( h \in \mathbb{N})$.
Then for $M_0 \in \mathbb{N}$ sufficiently large, there exists a Sidon set $S = S(q, M_0)$ in $\mathbb{G}_{4M_0}  \subseteq \mathbb{F}_q[t]$
such that the following holds.
Given any $g \in \mathbb{G}_{4M_0}$, there exist
$s_1, s_2, s_3 \in S$ with $s_i \not = s_j \ (i \not = j)$ such that
$$
s_1 + s_2 + s_3 = g.
$$
\end{reptheorem}
\begin{proof}
We have the following group isomorphisms when we only consider the additive properties,
$$
\mathbb{G}_{4M_0} \cong (\mathbb{F}_q)^{4M_0} \cong (\mathbb{Z} / p \mathbb{Z})^{4hM_0}
\cong \mathbb{F}_{q'} \times \mathbb{F}_{q'},
$$
where $q' = p^{2hM_0}$. Therefore, if we can find a Sidon basis of order $3$ in
$\mathbb{F}_{q'} \times \mathbb{F}_{q'}$, then we are done.

Let $S = \{(x,x^2) : x \in \mathbb{F}_{q'} \}$. Then, by \cite{C1} we know that $S$ is a Sidon set in $\mathbb{F}_{q'} \times \mathbb{F}_{q'}$.
For the sake of completeness, we present the proof from \cite{C1} here.
We have to check that given $(0,0) \not = (e_1, e_2) \in \mathbb{F}_{q'} \times \mathbb{F}_{q'}$, the equation $(x_1, x_1^2) - (x_2, x_2^2) = (e_1, e_2)$
uniquely determines $x_1$ and $x_2$ in $\mathbb{F}_{q'}$, or that it has no solution.
If $e_1 =0$, then it is clear that there do not exist $x_1$ and $x_2$ in  $\mathbb{F}_{q'}$ that satisfy the equation.
On the other hand, suppose $e_1 \not = 0$.
Since $x_1 = e_1 + x_2$, we have
$e_2 = (x_2 + e_1)^2 - x_2^2 =  2 e_1 x_2 + e_1^2$, which uniquely determines
$x_2$. Once $x_2$ is determined, there is only one choice for $x_1$.
Therefore, we have shown that $r_{S-S}( (e_1, e_2)  ) \leq 1$, and hence $S$ is a Sidon set.

Now we show $S$ is an additive basis of order $3$. This is equivalent to showing that
for any $(a,b) \in \mathbb{F}_{q'} \times \mathbb{F}_{q'}$,
the system
\begin{equation}
\label{system}
x+y+t = a \ \text{  and  } \ x^2 +y^2+ t^2 = b,
\end{equation}
has a solution in $\mathbb{F}_{q'} \times \mathbb{F}_{q'} \times \mathbb{F}_{q'}$.

We consider the polynomial
$$
f(x,y) = x^2 +y^2+ (x+y-a)^2 - b = 2 (x^2 + y^2 + xy - a x - a y ) + a^2 - b
$$
constructed from ~(\ref{system}), and its homogenization
$$
F(x,y,z) = 2 (x^2 + y^2 + x y - a x z - a y z ) + (a^2 - b)z^2.
$$

Suppose $F$ is reducible over $\overline{ \mathbb{F} }_{q'}$, where $\overline{\mathbb{F} }_{q'}$
is the algebraic closure of $\mathbb{F}_{q'}$, in which case
$F$ decomposes into two lines $L_1$ and $L_2$
with coefficients in $\overline{\mathbb{F} }_{q'}$.
Without loss of generality, let
$$
F(x,y,z) = 2(x + \alpha_1  y + \beta_1 z)(x + \alpha_2 y + \beta_2 z),
$$
where $\alpha_1, \beta_1, \alpha_2, \beta_2 \in \overline{\mathbb{F} }_{q'}$.
By expanding out the factors, we see from the coefficients of
$y^2, xy, xz$, and $yz$
that
$\alpha_1\alpha_2 =1, \alpha_1 + \alpha_2 = 1$,
$\beta_1 + \beta_2 = -a$, and $\alpha_1\beta_2 + \alpha_2\beta_1 = -a$, respectively.
Since $q' = p^{2hM_0}$ and $2 | (2hM_0)$, we have $\mathbb{F}_{p^2} \subseteq \mathbb{F}_{q'}.$
From the first and the second equation, we obtain that $\alpha_1$ and $\alpha_2$ are non-zero, and
$$
\alpha_1, \alpha_2 \in \mathbb{F}_{p^2} \subseteq \mathbb{F}_{q'}.
$$
Since the characteristic of $\mathbb{F}_{q'}$ is not $3$, we also obtain $\alpha_1 \not = \alpha_2.$
Then from the third and the forth equation, we can deduce that
$$
\beta_1, \beta_2 \in \mathbb{F}_{q'}.
$$
Therefore, $F$ is in fact reducible over $\mathbb{F}_{q'}$, and hence
$f$ decomposes into two linear factors over $\mathbb{F}_{q'}$ as follows
$$
f(x,y) = F(x,y,1) = 2(x + \alpha_1  y + \beta_1)(x + \alpha_2 y + \beta_2).
$$
Thus, we see that
~(\ref{system}) has at least $q'$ solutions in $\mathbb{F}_{q'} \times \mathbb{F}_{q'} \times \mathbb{F}_{q'}$
in this case.

On the other hand, suppose $F$ is irreducible over $\overline{\mathbb{F}}_{q'}$. 
Let $V(F)$ be the hypersurface in $\mathbb{P}^2_{ \mathbb{F}_{q'}}$ defined by $F$.
In this case, we may invoke a theorem by S. Lang and A. Weil \cite{LW}, and obtain that $V(F)$ has $q' + O(1)$ rational points
over $\mathbb{F}_{q'}$. We know that $F(x,y,0) = 2 (x^2 + y^2 + x y)$
decomposes into two linear factors over $\overline{\mathbb{F} }_{q'}$, because it is a quadratic form in two variables. Then we can verify that
$F(x,y,0)$ has at most $O(1)$ solutions in $\mathbb{P}^1_{ \mathbb{F}_{q'}}$. Therefore,
it follows that $V(F)$ contains $q' + O(1)$ points
of the form $[x_0:y_0:1]$ from which we deduce
~(\ref{system}) has $q' + O(1)$ solutions in $\mathbb{F}_{q'} \times \mathbb{F}_{q'} \times \mathbb{F}_{q'}$.

In both cases, we have that ~(\ref{system})
has at least $q' + O(1)$ solutions. Suppose $(x_1, x_2, x_3)$ is a solution to ~(\ref{system})
such that $x_i = x_j$ for some $i \not = j$, without loss of generality let $i=1$ and $j=2$. Then, the number of such solutions is equal to the
number of solutions to
\begin{equation}
\label{system 2}
x+x+y = a \ \text{  and  } \ x^2 +x^2+ y^2 = b.
\end{equation}
Since the equation $2 x^2 + (a - 2x)^2 = b$ has at most
$2$ solutions in $\mathbb{F}_{q'}$, we have that
~(\ref{system 2}) has at most $2$ solutions.
Hence, the number of solutions $(x_1, x_2, x_3)$ to ~(\ref{system})
such that $x_i = x_j$ for some $i \not = j$ is $O(1)$.
Therefore, for each $(a,b) \in \mathbb{F}_{q'} \times \mathbb{F}_{q'}$ we can find a solution $(x_1, x_2, x_3)$ to ~(\ref{system})
such that $x_i \not = x_j \ (i \not = j)$, provided $q'$ is sufficiently large.
\end{proof}

\begin{cor}
\label{cor to thm}
Let $p$ be an odd prime, and $q = p^h \ ( h \in \mathbb{N})$.
Then for $M_0 \in \mathbb{N}$ sufficiently large, there exists a Sidon set $S = S(q, M_0)$ in $\mathbb{G}_{4M_0}\subseteq \mathbb{F}_q[t]$
such that the following holds.
Given any $g \in \mathbb{G}_{4M_0}$, there exist
$s_1, s_2, s_3, s_4 \in S$ with $s_i \not = s_j \ (i \not = j)$ such that
$$
s_1 + s_2 + s_3 + s_4 = g.
$$
\end{cor}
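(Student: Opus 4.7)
The plan is to mirror the proof of Theorem~\ref{thm 2.1 in C}: via the isomorphism $\mathbb{G}_{4M_0} \cong \mathbb{F}_{q'} \times \mathbb{F}_{q'}$ with $q' = p^{2hM_0}$, use the same Sidon set $S = \{(x,x^2) : x \in \mathbb{F}_{q'}\}$. The verification that $S$ is a Sidon set in the proof of Theorem~\ref{thm 2.1 in C} only uses that $p$ is odd (it amounts to solving one linear equation), so it remains valid under the weaker hypothesis of the corollary. The task then reduces to showing that for every $(a,b) \in \mathbb{F}_{q'} \times \mathbb{F}_{q'}$, the system
\[
x_1 + x_2 + x_3 + x_4 = a, \qquad x_1^2 + x_2^2 + x_3^2 + x_4^2 = b,
\]
has a solution with pairwise distinct $x_i \in \mathbb{F}_{q'}$ provided $q'$ is sufficiently large.

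Having one more variable than in Theorem~\ref{thm 2.1 in C} allows a direct counting argument that bypasses the delicate case analysis with the characteristic (and thus works in characteristic $3$). Since $p$ is odd, the substitution $y_1 = x_1 + x_2$, $u = x_1 - x_2$, $v = x_3 - x_4$ (with $x_3 + x_4 = a - y_1$ forced) is a bijection between $4$-tuples satisfying the linear constraint and triples $(y_1,u,v) \in \mathbb{F}_{q'}^3$, and the quadratic constraint becomes
\[
u^2 + v^2 \;=\; 2b - y_1^2 - (a - y_1)^2 \;=:\; c(y_1).
\]
For each $y_1 \in \mathbb{F}_{q'}$ with $c(y_1) \neq 0$, the affine conic $u^2 + v^2 = c(y_1)$ has either $q'-1$ or $q'+1$ solutions over $\mathbb{F}_{q'}$ according to whether $-1$ is a square (by factoring $u^2+v^2 = (u+iv)(u-iv)$ when $-1$ is a square, and by the universality of $2$-dimensional non-degenerate quadratic forms over finite fields in the anisotropic case). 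As $c(y_1)$ is a nonconstant quadratic in $y_1$ (its leading coefficient is $-2 \neq 0$), it vanishes for at most two values of $y_1$, so we obtain at least $(q'-2)(q'-1)$ admissible triples $(y_1,u,v)$, hence at least that many $4$-tuples $(x_1,x_2,x_3,x_4)$.

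It remains to bound the number of bad tuples where $x_i = x_j$ for some $i \neq j$. The coincidences $x_1=x_2$ and $x_3=x_4$ correspond respectively to $u=0$ and $v=0$; in each case the surviving equation $v^2 = c(y_1)$ (resp.\ $u^2 = c(y_1)$) has at most $2q'$ solutions. Each of the four remaining coincidences (such as $x_1 = x_3$, which reads $v = 2y_1 + u - a$) imposes a linear relation among $(y_1,u,v)$; substituting into $u^2+v^2 = c(y_1)$ yields an affine plane curve of bounded degree, with only $O(q')$ points. Summing over all six cases, the number of bad tuples is $O(q')$, which is dominated by the main term $(q'-2)(q'-1) = q'^2 + O(q')$ once $q'$ is large. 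The main (but routine) obstacle is the enumeration of these coincidence cases and verifying that each contributes only $O(q')$ solutions; once this is done, good $4$-tuples exist for $M_0$ sufficiently large, proving the corollary.
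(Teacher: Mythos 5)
Your argument is correct, but it takes a genuinely different route from the paper's. The paper proves Corollary~\ref{cor to thm} by recycling the conclusion of Theorem~\ref{thm 2.1 in C}: it observes that the system $x_1+x_2+x_3+0=a$, $x_1^2+x_2^2+x_3^2+0^2=b$ already has $q'+O(1)$ solutions with distinct $x_1,x_2,x_3$ (from the three-variable analysis via Lang--Weil), and then checks that only $O(1)$ of these have some $x_i=0$, so one can append $0\in\mathbb{F}_{q'}$ (i.e.\ $(0,0)\in S$) as the fourth summand. You instead attack the four-variable system head-on, linearizing via $y_1=x_1+x_2$, $u=x_1-x_2$, $v=x_3-x_4$ to reduce to a one-parameter family of affine conics $u^2+v^2=c(y_1)$, which you count exactly ($q'\pm1$ points each for the $q'-2$ values of $y_1$ with $c(y_1)\neq 0$), and then discard the $O(q')$ degenerate tuples coming from the six coincidence conditions $x_i=x_j$. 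Your approach buys two things: it is entirely elementary (no appeal to Lang--Weil or to the reducibility case analysis in Theorem~\ref{thm 2.1 in C}), and, because the extra variable removes the dependence on the characteristic-$\neq 3$ argument, it genuinely establishes the corollary for all odd $p$, which is what the corollary's hypothesis asserts. The paper's proof, by contrast, silently inherits the $p>3$ hypothesis through its reliance on Theorem~\ref{thm 2.1 in C}, so your version is actually the more faithful proof of the stated result. The paper's approach, in return, is shorter on the page and avoids re-deriving a point count, at the modest cost of producing only $q'+O(1)$ rather than $q'^2+O(q')$ good $4$-tuples.
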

\begin{proof}
Let $\mathbb{F}_{q'}$ and $S \subseteq \mathbb{F}_{q'} \times \mathbb{F}_{q'}$ be as in the proof of Theorem \ref{thm 2.1 in C}.
We show that $S$ satisfies the required conditions. From the proof of Theorem
\ref{thm 2.1 in C}, we know that
for any $(a,b) \in \mathbb{F}_{q'} \times \mathbb{F}_{q'}$, the system
\begin{equation}
\label{system 3}
x + y + t + 0 = a \ \text{  and  } \ x^2 +y^2+ t^2 + 0^2= b,
\end{equation}
has at least $q' + \bigO{1}$ solutions
of the form $(x_1, x_2, x_3) \in \mathbb{F}_{q'} \times \mathbb{F}_{q'} \times \mathbb{F}_{q'}$, where
$x_i \not = x_j \ (i \not = j)$.
We observe that all of these solutions except those with at least one of $x_1, x_2, x_3$ being $0$ satisfy the conditions.
Without loss of generality, suppose $x_3=0$, then ~(\ref{system 3}) reduces to solving
\begin{equation}
x + y = a \ \text{  and  } \ x^2 +y^2 = b,
\end{equation}
which further reduces to solving a quadratic equation.
Thus, it follows that the number of solutions $(x_1, x_2, x_3)$ to ~(\ref{system 3})
with at least one of $x_1, x_2, x_3$ being $0$ is $\bigO{1}$.
Therefore, it follows that there exist at least $q'+ \bigO{1}$ solutions in $\mathbb{F}_{q'} \times \mathbb{F}_{q'} \times \mathbb{F}_{q'}$,
which satisfy the desired conditions.
\end{proof}

\section{Preliminaries}
\label{prelim}
We begin this section by introducing a result that is useful to us.
The following result is known as the Borel-Cantelli lemma, which plays a crucial role
in probability theory \cite{HR}.
\begin{thm}[The Borel-Cantelli lemma]
\label{Borel-C}
Suppose we have a probability space $(\Omega, \mathcal{M}, \mathbb{P})$.
Let $\{ E_j \}_{j \geq 1}$ be a sequence of measurable sets.
If
$$
\sum_{j=1}^{\infty} \mathbb{P}(E_j) < \infty,
$$
then we have
$$
\mathbb{P} \left(  \cap_{i=1}^{\infty}  \cup_{j=i}^{\infty} E_j  \right) = 0.
$$
\end{thm}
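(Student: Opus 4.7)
The plan is a direct application of two standard properties of any probability measure: countable subadditivity and monotonicity. The only real content is to combine them, using the convergence of the series $\sum_j \mathbb{P}(E_j)$ to force the tails to vanish. I expect no genuine obstacle; the argument is entirely routine and can be presented in a few lines.

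First I would fix $i \geq 1$ and use countable subadditivity on the countable union $\bigcup_{j=i}^{\infty} E_j$ (which is measurable, since each $E_j$ is) to obtain
$$
\mathbb{P}\left(\bigcup_{j=i}^{\infty} E_j\right) \leq \sum_{j=i}^{\infty} \mathbb{P}(E_j).
$$
Because $\sum_{j=1}^{\infty} \mathbb{P}(E_j) < \infty$ by hypothesis, the right-hand side is the tail of a convergent series and therefore tends to $0$ as $i \to \infty$.

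Next, observe that for every $i \geq 1$ we have the trivial containment
$$
\bigcap_{k=1}^{\infty} \bigcup_{j=k}^{\infty} E_j \ \subseteq \ \bigcup_{j=i}^{\infty} E_j,
$$
so monotonicity of $\mathbb{P}$ yields
$$
\mathbb{P}\left(\bigcap_{k=1}^{\infty} \bigcup_{j=k}^{\infty} E_j\right) \leq \mathbb{P}\left(\bigcup_{j=i}^{\infty} E_j\right) \leq \sum_{j=i}^{\infty} \mathbb{P}(E_j).
$$
The left-hand side does not depend on $i$, while the right-hand side tends to $0$ as $i \to \infty$, which forces the left-hand side to equal $0$, as required.

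An equivalent and slightly slicker formulation uses continuity from above: the sets $A_i = \bigcup_{j=i}^{\infty} E_j$ form a decreasing sequence of measurable sets with $\mathbb{P}(A_1) \leq \sum_{j=1}^{\infty} \mathbb{P}(E_j) < \infty$, so $\mathbb{P}\bigl(\bigcap_{i=1}^{\infty} A_i\bigr) = \lim_{i \to \infty} \mathbb{P}(A_i) = 0$. Either presentation yields the conclusion; there is nothing subtle beyond the standard measure-theoretic setup that is implicit in the statement.
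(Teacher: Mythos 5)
Your proof is correct and is the standard argument (subadditivity plus monotonicity, or equivalently continuity from above applied to the decreasing tails $A_i = \bigcup_{j\geq i} E_j$). The paper does not actually prove this lemma; it only states it and cites Halberstam--Roth for background, so there is no in-paper proof to compare against. Your argument would serve as a perfectly adequate proof if one were to be included.
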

In other words, the Borel-Cantelli lemma states that if
$\sum_{j=1}^{\infty} \mathbb{P}(E_j) < \infty$, then with probability $1$ at most a finite
number of the events $E_j$ can occur.

Throughout the paper we fix $N$ to be a sufficiently large
positive integer, and we let $S$ be a non-empty subset of $\mathbb{G}_N$. Furthermore, we
choose $S$ to satisfy the conditions of Theorem \ref{thm 2.1 in C} in Section \ref{sec proof of main 1},
and we
choose $S$ to satisfy the conditions of Corollary \ref{cor to thm} in Section \ref{sec proof of main 2}.
Let $\Omega$ be the space of all sequences of polynomials in $\mathbb{F}_q[t]$.
Let
$n_0 = t^N \in \mathbb{F}_q[t]$, and by $x \equiv S (\text{mod }n_0)$, we mean $x \equiv s  (\text{mod }n_0)$
for some $s \in S$.
For each $\gamma < 1$ and $M \in \mathbb{N}$, we define the probability space $\mathcal{S}_M( \gamma ; S \ \text{mod } n_0 )$ in the following manner. We let $\mathcal{S}_M( \gamma ; S \ \text{mod } n_0 ) = (\Omega, \mathcal{M}, \mathbb{P})$ to be the probability space
of all sequences of polynomials $\omega$, where $\mathcal{M}$ is the appropriate $\sigma$-algebra, such that all the events $x \in \omega$ are independent, and
\begin{eqnarray}
\mathbb{P}(  \{ x \in \omega \})
=
\begin{cases}
q^{- \gamma (\deg x) }, &\mbox{if } x \equiv S (\text{mod }n_0) \text{ and } \deg x > M, \\
0, & \mbox{otherwise. }
\end{cases}
\end{eqnarray}
We refer the reader to \cite{HR} for the details on construction of such probability spaces.
For simplicity, we let $\mathbb{P}(  \{  0 \in \omega \} ) = 0$.
From here on whenever we refer to $\mathbb{P}$ we mean this probability measure.
Let $f$ be a function from $\mathbb{R}$ to $\mathbb{R}$.
By $f = o_M(1)$, we mean that $|f(M)| \rightarrow 0$ as $M \rightarrow \infty.$

The following result is known as Janson's inequality, see for example
\cite{C, J, JLR}.
\begin{thm}[Janson's inequality]
\label{Janson}
Let $\mathcal{F}$ be a family of sets, and let $\omega$ be a random subset.
Let $Y(\omega) = | \{ \theta \in \mathcal{F} : \theta \subseteq \omega \} |$
with finite expected value $\mu = \mathbb{E}(Y(\omega))$. Then, for $0 \leq \varepsilon \leq 1$, we have that
$$
\mathbb{P}( \{ \omega \in \Omega : Y(\omega) \leq (1 - \varepsilon ) \mu \} ) \leq \exp( - \varepsilon^2 \mu^2/ (2 \mu  + 2  \Delta(\mathcal{F})   ) )  ,
$$
where
\begin{equation}
\label{def Delta}
\Delta(\mathcal{F}) = \sum_{\stackrel{\theta, \theta' \in \mathcal{F} }{\theta \sim \theta'}} \mathbb{P}( \{ \omega \in \Omega:  \theta, \theta' \subseteq \omega \}),
\end{equation}
and $\theta \sim \theta'$ means $\theta \cap \theta' \not = \emptyset$
and $\theta \not = \theta'$. In particular, if $\Delta(\mathcal{F})< \mu$ we have
$$
\mathbb{P}(  \{ \omega \in \Omega : Y(\omega) \leq \mu/2 \} ) \leq \exp( - \mu/ 16 ).
$$
\end{thm}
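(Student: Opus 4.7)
The plan is to combine a Chernoff-type exponential moment bound with a covariance estimate exploiting the monotone structure of $Y$. Set $\phi_\theta = \mathbf{1}_{\theta \subseteq \omega}$ so that $Y = \sum_{\theta \in \mathcal{F}} \phi_\theta$. For any $t \geq 0$, Markov's inequality applied to $e^{-tY}$ yields
$$
\mathbb{P}(Y \leq (1-\varepsilon)\mu) \leq e^{t(1-\varepsilon)\mu}\,\mathbb{E}\bigl(e^{-tY}\bigr),
$$
and the task reduces to bounding the moment generating function $M(t) := \mathbb{E}(e^{-tY})$ from above.

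I would study $f(t) := \log M(t)$. Direct differentiation under the expectation gives $f(0)=0$, $f'(0) = -\mu$, and $f''(t) = \mathrm{Var}_{Q_t}(Y)$, where $Q_t$ denotes the probability measure with density $e^{-tY}/M(t)$ relative to $\mathbb{P}$. Expanding the variance as $\sum_{\theta,\theta'} \mathrm{Cov}_{Q_t}(\phi_\theta,\phi_{\theta'})$ and decomposing by intersection pattern: the diagonal $\theta = \theta'$ contributes $\sum_\theta \mathrm{Var}_{Q_t}(\phi_\theta) \leq \sum_\theta \mathbb{E}_{Q_t}(\phi_\theta) \leq \mu$; the intersecting pairs $\theta \sim \theta'$ contribute at most $\sum_{\theta \sim \theta'} \mathbb{E}_{Q_t}(\phi_\theta\phi_{\theta'}) \leq \Delta(\mathcal{F})$; and the disjoint pairs contribute non-positively because under $Q_t$ monotone functionals of disjoint coordinates remain negatively correlated (a Harris--FKG-type fact, since the density $e^{-tY}$ is a decreasing function of the underlying independent indicators). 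Together these yield
$$
f''(t) \leq \mu + \Delta(\mathcal{F}) \quad \text{for all } t \geq 0.
$$

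Integrating twice from $0$ gives $f(t) \leq -\mu t + (\mu + \Delta) t^2/2$, whence
$$
\mathbb{P}(Y \leq (1-\varepsilon)\mu) \leq \exp\Bigl(-\varepsilon\mu t + (\mu+\Delta)t^2/2\Bigr),
$$
and choosing the minimizer $t = \varepsilon\mu/(\mu+\Delta)$ produces the stated inequality $\exp\bigl(-\varepsilon^2 \mu^2/(2\mu+2\Delta)\bigr)$. The specialization $\varepsilon = 1/2$ together with $\Delta < \mu$ then recovers the displayed bound $\exp(-\mu/16)$. The main obstacle is the second-derivative estimate $f''(t) \leq \mu+\Delta$: while the diagonal and intersecting contributions are simple size bounds, verifying that disjoint pairs contribute non-positively under the tilted measure $Q_t$ requires a careful correlation argument for monotone events under a product measure perturbed by a decreasing density. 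This is the technical heart of Janson's original proof and is where essentially all of the work lies; the rest of the argument is a routine Chernoff optimization.
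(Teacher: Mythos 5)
The paper does not actually prove Janson's inequality; Theorem~\ref{Janson} is stated with citations to \cite{C, J, JLR}, so there is no internal proof to compare against --- only the standard literature argument.

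Your Chernoff framework is the right one, and the reduction to showing $f(t) := \log\mathbb{E}(e^{-tY}) \leq -\mu t + (\mu+\Delta)t^2/2$ for $t\ge 0$, followed by optimization at $t=\varepsilon\mu/(\mu+\Delta)$, is correct. The gap is in the claimed second-derivative bound $f''(t)\le\mu+\Delta$, and specifically in the assertion that disjoint pairs contribute non-positively to $\mathrm{Var}_{Q_t}(Y)$. That ``Harris--FKG-type fact'' is false. The tilting density $e^{-tY}$ is a \emph{log-submodular} (not log-supermodular) perturbation of the product measure --- since $Y=\sum_\theta\phi_\theta$ with each $\phi_\theta(\omega)=\mathbf{1}_{\theta\subseteq\omega}$ supermodular --- so $Q_t$ does not satisfy the FKG lattice condition and is not negatively associated in general. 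A concrete counterexample: take three independent Bernoulli$(p)$ coordinates $X_1,X_2,X_3$ and $\mathcal{F}=\{\{1\},\{2\},\{1,3\},\{2,3\}\}$, so $Y=X_1+X_2+X_1X_3+X_2X_3$, and consider the disjoint pair $\theta=\{1\}$, $\theta'=\{2\}$ with $\phi_\theta=X_1$, $\phi_{\theta'}=X_2$. Under $Q_t$ one has $X_1\perp X_2\mid X_3$ (the tilt factors given $X_3$), and both $h_i(x_3):=\mathbb{E}_{Q_t}(X_i\mid X_3=x_3)$ are strictly decreasing in $x_3$ for $t>0$, $p\in(0,1)$. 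By the law of total covariance, $\mathrm{Cov}_{Q_t}(X_1,X_2)=\mathrm{Cov}_{Q_t}\bigl(h_1(X_3),h_2(X_3)\bigr)>0$. So your decomposition does not establish $f''(t)\le\mu+\Delta$.

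The standard proof (Janson \cite{J}; Janson--\L{}uczak--Ruci\'nski \cite{JLR}; also Alon--Spencer) avoids the tilted measure entirely when applying correlation inequalities: it bounds $-f'(t)=\sum_\theta\mathbb{E}(\phi_\theta e^{-tY})/\mathbb{E}(e^{-tY})$ directly by, for each fixed $\theta$, decomposing $Y=Z_\theta+W_\theta$ with $Z_\theta=\phi_\theta+\sum_{\theta'\sim\theta}\phi_{\theta'}$ and $W_\theta=\sum_{\theta'\cap\theta=\emptyset}\phi_{\theta'}$ (independent of $\phi_\theta$), then using Harris's inequality \emph{under the original product measure} $\mathbb{P}$ to show $\mathbb{E}(\phi_\theta e^{-tY})\ge \mathbb{E}(\phi_\theta)\,\mathbb{E}(e^{-tZ_\theta}\mid\phi_\theta=1)\,\mathbb{E}(e^{-tW_\theta})$, combined with $\mathbb{E}(e^{-tW_\theta})\ge\mathbb{E}(e^{-tY})$ and Jensen on $e^{-tZ_\theta}$. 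This yields $-f'(t)\ge\mu-t(\mu+\Delta)$, which integrates to the same bound on $f$ without ever needing a pointwise control on $f''$. If you want to salvage your write-up, the fix is to replace the tilted-measure variance computation with this per-$\theta$ decomposition and apply FKG only under $\mathbb{P}$, where it is valid.
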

Note in order to avoid clutter in the exposition, whenever we have a subset of $\Omega$ of the form
$\{ \omega \in \Omega :  \omega \text{ satisfies  ... }   \}$ we simply denote it by
$\{  \omega \text{ satisfies  ... }  \}$.

For a given vector $\overline{y} = (y_1, ..., y_H)$, we define
Set$(\overline{y}) = \{y_1, ..., y_H\}$. We say that a collection of $K$ distinct
vectors $\overline{x}_j \ (1 \leq  j \leq K) $ form a \textit{disjoint set of $K$
vectors} ($K$-d.s.v. for short) if
Set$(\overline{x}_j) \bigcap$ Set$(\overline{x}_{l}) = \emptyset$
for any $j \not = l, 1 \leq j, l \leq K$.
We say that $K$ distinct vectors with $H$ coordinates form a \textit{vectorial sunflower
of $K$ petals}, if for some $I \subseteq \{ 1, ..., H \}$ the following two
conditions are satisfied:

i) For all $i \in I$, all the vectors have the same $i$-th coordinate.

ii) The set of vectors obtained by removing the $i$-th coordinates, for
all $i \in I$, form a $K$-d.s.v.

Following the terminology of \cite{C}, we say $\mathcal{F}$ is a family of vectors of $H$ coordinates if $\mathcal{F}$ is a subset of $(\mathbb{F}_q[t])^H$. We have the following lemma.
\begin{lem}[Vectorial sunflower lemma]
\label{vs lem}
Let $\mathcal{F}$ be a family of vectors of $H$ coordinates.
If $\mathcal{F}$ does not contain a vectorial sunflower of $K$ petals,
then
$$
|\mathcal{F}| \leq H! ((H^2 - H +1) K)^H.
$$
\end{lem}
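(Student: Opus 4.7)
The plan is to prove this by induction on the number of coordinates $H$, following the standard template for the Erd\H{o}s--Rado sunflower lemma adapted to the vectorial setting. The base case $H = 1$ is essentially immediate: any two distinct $1$-vectors automatically have disjoint one-element coordinate sets, so a family of $K$ distinct $1$-vectors is itself a $K$-d.s.v., which is a vectorial sunflower with $I = \emptyset$. Hence if no such sunflower exists, $|\mathcal{F}| \leq K-1$, comfortably within the claimed bound $K$.

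For the inductive step, assume the bound for families of $(H-1)$-coordinate vectors and let $\mathcal{F}$ avoid a vectorial sunflower of $K$ petals. First I pick a maximal disjoint subfamily $\mathcal{D} \subseteq \mathcal{F}$ (a maximal $k$-d.s.v.\ sitting inside $\mathcal{F}$). By hypothesis $|\mathcal{D}| \leq K-1$, so $U := \bigcup_{\overline{y} \in \mathcal{D}} \text{Set}(\overline{y})$ satisfies $|U| \leq (K-1)H$. Maximality forces every $\overline{x} \in \mathcal{F}$ to share at least one coordinate value with some vector of $\mathcal{D}$, i.e.\ some entry of $\overline{x}$ lies in $U$. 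There are at most $H \cdot |U| \leq (K-1)H^2$ (position, value) pairs with the value in $U$, so by the pigeonhole principle there exist a coordinate index $i$ and a value $v \in U$ with
$$
|\mathcal{F}'| \ := \ \bigl|\{\overline{x} \in \mathcal{F} : x_i = v\}\bigr| \ \geq \ \frac{|\mathcal{F}|}{(K-1)H^2}.
$$

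Now I delete the $i$-th coordinate from every vector in $\mathcal{F}'$ to produce a family $\mathcal{F}''$ of $(H-1)$-vectors of the same cardinality. The point is that any vectorial sunflower with $K$ petals inside $\mathcal{F}''$, with index set $I' \subseteq \{1,\dots,H\}\setminus\{i\}$, lifts back to a vectorial sunflower of $K$ petals in $\mathcal{F}$ with index set $I' \cup \{i\}$ (the common coordinates are preserved and the $i$-th coordinate is $v$ throughout). Since $\mathcal{F}$ contains no such sunflower, neither does $\mathcal{F}''$, and the inductive hypothesis gives $|\mathcal{F}''| \leq (H-1)!\,((H^2-3H+3)K)^{H-1}$. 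Combining,
$$
|\mathcal{F}| \ \leq \ (K-1)H^2 \cdot (H-1)!\,((H^2-3H+3)K)^{H-1}.
$$

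The last step is the routine arithmetic check that this quantity is bounded by $H!\,((H^2-H+1)K)^H$. After cancelling the common factors of $K^{H-1}(H-1)!$, this reduces to verifying $(H^2-H+1)^H \geq H\,(H^2-3H+3)^{H-1}$, which is easy since $H^2-H+1 \geq H^2-3H+3$ and $H^2-H+1 \geq H$ for $H \geq 1$. The only genuinely delicate point in the whole argument is making sure the pigeonhole constants line up with the claimed $(H^2-H+1)K$ factor; this is exactly why one uses the bound $|U|\leq(K-1)H$ together with $H$ coordinate positions, yielding the crucial $H^2$ (rather than a cruder factor) in the denominator.
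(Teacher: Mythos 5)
Your inductive proof is correct: the base case, the choice of a maximal pairwise-disjoint subfamily $\mathcal{D}$ with $|\mathcal{D}| \leq K-1$, the pigeonhole over $(\text{position}, \text{value})$ pairs drawn from $U = \bigcup_{\overline{y}\in\mathcal{D}}\text{Set}(\overline{y})$, the deletion of the frozen $i$-th coordinate (which is injective because $x_i = v$ is fixed on $\mathcal{F}'$), the lifting of any $K$-petal vectorial sunflower in $\mathcal{F}''$ back to one in $\mathcal{F}$ with index set $I'\cup\{i\}$, and the closing inequality $H\,(H^2-3H+3)^{H-1} \leq (H^2-H+1)^H$ all check out. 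However, your route is genuinely different from the one the paper has in mind. The paper cites \cite[Lemma 3.2]{C}, and the (commented-out) draft in the source shows the intended argument: map each $\overline{x} = (x_1,\dots,x_H)$ to the genuinely $H$-element set $\{t^H x_1 + t,\ t^H x_2 + t^2,\ \dots,\ t^H x_H + t^H\}$ so that position information is encoded in the marker $t^i$, invoke the \emph{classical} Erd\H{o}s--Rado sunflower lemma as a black box to obtain a large classical sunflower, and then greedily thin it to a vectorial sunflower of $K$ petals, observing that each chosen vector can clash (via a coincidence $(x_1)_i = (x_j)_{i'}$ with $i \ne i'$ outside the core) with at most $H(H-1)$ others. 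Your proof instead reproves the Erd\H{o}s--Rado induction from scratch inside the vectorial setting rather than reducing to the set case; this gives a self-contained argument that does not presuppose the classical lemma, at the cost of carrying the $(H^2-3H+3)$-versus-$(H^2-H+1)$ arithmetic by hand. One small presentational point worth adding: you should dispose of the degenerate cases $\mathcal{F} = \emptyset$ and $K = 1$ up front, since in those cases $U$ may be empty or the pigeonhole denominator $(K-1)H^2$ vanishes; both are of course trivial.
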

\begin{proof}
This is obtained by slightly modifying the proof of \cite[Lemma 3.2]{C}.
\end{proof}

Given $\mathcal{F}$, a family of vectors of $H$ coordinates,
we define
$$
\mathcal{F}(\omega) = \{ \overline{x} \in \mathcal{F} : \text{Set}(\overline{x}) \subseteq \omega \}.
$$
The following is an immediate consequence of Lemma \ref{vs lem}.

\begin{cor}
\label{cor sun and bound}
Let $\{ \mathcal{F}_n \}_{n \in \mathbb{F}_q[t]}$ be a sequence of family of vectors of $H$ coordinates.
Suppose for $\Omega(K) = \{ \omega \in \Omega : \mathcal{F}_n(\omega) \text{ does not contain vectorial sunflowers of }
K \text{ petals for any } n \in \mathbb{F}_q[t] \}$,
we have
$$
\mathbb{P}(\Omega(K) ) = 1 - o_M(1).
$$
Then, we have
$$
\mathbb{P}(\{  |\mathcal{F}_n(\omega)| \leq H! ((H^2 - H +1) K)^H \text{ for all } n \in \mathbb{F}_q[t] \}) \geq 1 - o_M(1).
$$
\end{cor}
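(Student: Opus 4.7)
The plan is to observe that Corollary \ref{cor sun and bound} is an immediate pointwise consequence of the Vectorial Sunflower Lemma (Lemma \ref{vs lem}). First I would fix an arbitrary $\omega \in \Omega(K)$. By the definition of $\Omega(K)$, for every $n \in \mathbb{F}_q[t]$ the family $\mathcal{F}_n(\omega) \subseteq \mathcal{F}_n$ is itself a family of vectors of $H$ coordinates that contains no vectorial sunflower of $K$ petals. Applying Lemma \ref{vs lem} directly to $\mathcal{F}_n(\omega)$ then yields
$$|\mathcal{F}_n(\omega)| \leq H!((H^2-H+1)K)^H$$
simultaneously for all $n \in \mathbb{F}_q[t]$, and this holds for every $\omega \in \Omega(K)$.

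Therefore we obtain the inclusion of events
$$\Omega(K) \subseteq \{\omega \in \Omega : |\mathcal{F}_n(\omega)| \leq H!((H^2 - H + 1)K)^H \text{ for all } n \in \mathbb{F}_q[t]\},$$
and the corollary follows from monotonicity of the probability measure $\mathbb{P}$ together with the hypothesis $\mathbb{P}(\Omega(K)) = 1 - o_M(1)$. There is no genuine obstacle in this argument, since the statement is really just a reformulation of Lemma \ref{vs lem} in probabilistic language uniform in $n$; no further analysis of the dependence structure of the random set $\omega$ is required beyond what is already encoded in the hypothesis on $\Omega(K)$.
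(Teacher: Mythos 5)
Your argument is correct and is precisely the reasoning the paper leaves implicit when it says the corollary is ``an immediate consequence of Lemma \ref{vs lem}'': apply the lemma pointwise to each $\mathcal{F}_n(\omega)$ for $\omega \in \Omega(K)$, obtain the event inclusion, and conclude by monotonicity of $\mathbb{P}$.
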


We also make use of the following proposition.
\begin{prop}
\label{K-dsv 1}
Let $\{ \mathcal{F}_n \}_{n \in \mathbb{F}_q[t]}$ be a sequence of family of vectors of $H$ coordinates, and $\{ \mathcal{F}_n(\omega) \}_{n \in \mathbb{F}_q[t]}$ the corresponding random family, where $\omega$ is a random sequence in $\mathcal{S}_M( \gamma ; S \ \text{mod } n_0 )$.
Suppose
there is $\delta > 0$ such that $\mathbb{E} (|\mathcal{F}_n(\omega)|) \ll q^{- \delta \max\{ \deg n, M \} }$ for all $n \in \mathbb{F}_q[t]$.
If $K > 1 / \delta$, then
$$
\mathbb{P}( \{  \mathcal{F}_n(\omega) \text{ contains a $K$-d.s.v. for some } n \in \mathbb{F}_q[t] 
\} ) = o_M(1).
$$
\end{prop}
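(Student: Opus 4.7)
The plan is to apply a first moment argument. Let $N_K^{(n)}(\omega)$ denote the number of ordered $K$-tuples of elements of $\mathcal{F}_n(\omega)$ that form a $K$-d.s.v. By a union bound over $n \in \mathbb{F}_q[t]$ and Markov's inequality applied to each $N_K^{(n)}$,
$$
\mathbb{P}( \{  \mathcal{F}_n(\omega) \text{ contains a $K$-d.s.v.\ for some } n \in \mathbb{F}_q[t] \} ) \leq \sum_{n \in \mathbb{F}_q[t]} \mathbb{E}(N_K^{(n)}(\omega)),
$$
so it is enough to show that the right-hand side is $o_M(1)$.

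The key step is to establish $\mathbb{E}(N_K^{(n)}(\omega)) \leq \mathbb{E}(|\mathcal{F}_n(\omega)|)^K$. If $(\overline{x}_1, \ldots, \overline{x}_K)$ forms a $K$-d.s.v., then by definition $\text{Set}(\overline{x}_1), \ldots, \text{Set}(\overline{x}_K)$ are pairwise disjoint. Since the membership events $\{x \in \omega\}$ for distinct $x$ are independent in the probability space $\mathcal{S}_M(\gamma; S \ \text{mod } n_0)$,
$$
\mathbb{P}\bigl( \text{Set}(\overline{x}_i) \subseteq \omega \text{ for all } 1 \leq i \leq K \bigr) = \prod_{i=1}^K \mathbb{P}(\text{Set}(\overline{x}_i) \subseteq \omega).
$$
Summing these products over all $K$-d.s.v.'s in $\mathcal{F}_n$, and then extending the sum to all ordered $K$-tuples in $\mathcal{F}_n$ (which can only enlarge it), yields the stated inequality. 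Invoking the hypothesis $\mathbb{E}(|\mathcal{F}_n(\omega)|) \ll q^{-\delta \max\{\deg n, M\}}$ then gives
$$
\mathbb{E}(N_K^{(n)}(\omega)) \ll q^{-\delta K \max\{ \deg n, M \}}.
$$

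It remains to carry out the sum over $n$. Since the number of $n \in \mathbb{F}_q[t]$ of any fixed degree $d$ is at most $q^{d+1}$, splitting according to whether $\deg n < M$ or $\deg n \geq M$ gives
$$
\sum_{n \in \mathbb{F}_q[t]} q^{-\delta K \max\{ \deg n, M \}} \ll q^M \cdot q^{-\delta K M} + \sum_{d \geq M} q^{d+1} \cdot q^{-\delta K d} \ll q^{-(\delta K - 1) M},
$$
where the geometric series over $d \geq M$ converges precisely because the hypothesis $K > 1/\delta$ ensures $\delta K - 1 > 0$. The resulting bound is $o_M(1)$, completing the proof.

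The only genuinely nontrivial step is the factorization of the joint probability in the second paragraph, which relies on combining the disjointness built into the $K$-d.s.v.\ definition with the independence of the membership events in $\mathcal{S}_M(\gamma; S \ \text{mod } n_0)$. Everything else is routine counting, with the threshold $K > 1/\delta$ appearing exactly where it is needed to force the geometric series over degrees to converge.
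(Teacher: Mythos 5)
Your proof is correct and follows essentially the same first-moment argument as the paper: factor the joint probability using the disjointness built into the $K$-d.s.v.\ definition together with independence of membership events, bound the result by a power of $\mathbb{E}(|\mathcal{F}_n(\omega)|)$, and then sum over $n$ using $K > 1/\delta$ to force convergence. The only cosmetic difference is that you count ordered $K$-tuples (so you do not pick up the $1/K!$ factor the paper gets from working with unordered collections), which does not affect the conclusion.
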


\begin{proof}
By unraveling the definitions, we have the following sequence of inequalities
\begin{eqnarray}
&&\mathbb{P}( \{  \mathcal{F}_n(\omega) \text{ contains a $K$-d.s.v.}\} )
\notag
\\
&\ll&
\sum_{ \stackrel{\overline{x}_1, ..., \overline{x}_K \in \mathcal{F}_n  }{ \text{form a $K$-d.s.v. } } }
\mathbb{P}( \{  \text{Set }(\overline{x}_1), ..., \text{Set }(\overline{x}_K) \subseteq \omega \})
\notag
\\
&=&
\sum_{ \stackrel{\overline{x}_1, ..., \overline{x}_K \in \mathcal{F}_n  }{ \text{form a $K$-d.s.v. } } }
\mathbb{P}( \{  \text{Set }(\overline{x}_1) \subseteq \omega \}) ... \ \mathbb{P}( \{  \text{Set }(\overline{x}_K) \subseteq \omega \})
\notag
\\
&\leq&
\frac{1}{K!} \left( \sum_{ \overline{x} \in \mathcal{F}_n  } \mathbb{P}( \{  \text{Set }(\overline{x}) \subseteq \omega \} ) \right)^K
\notag
\\
&=&
\frac{ \mathbb{E} (|\mathcal{F}_n(\omega)|)^K }{K!}
\notag
\\
&\ll&
\frac{ q^{-\delta K \max\{ \deg n, M \}} }{K!}.
\notag
\end{eqnarray}
Since $(1 - \delta K)< 0$, we obtain that
\begin{eqnarray}
&&\mathbb{P}( \{  \mathcal{F}_n(\omega) \text{ contains a $K$- d.s.v. for some } n \in \mathbb{F}_q[t] \} 
)
\notag
\\
&\ll&
\sum_{ \deg n \leq M } \mathbb{P}( \{ \mathcal{F}_n(\omega) \text{ contains a $K$- d.s.v.}\} ) + \sum_{ \deg n > M } \mathbb{P}( \{ \mathcal{F}_n(\omega) \text{ contains a $K$- d.s.v.}\} )
\notag
\\
&\ll&
q^{-\delta K \max\{ \deg 0, M \}} + \sum_{j \leq M} (q^{j+1} - q^j) {q^{-\delta K M}} + \sum_{j > M} (q^{j+1} - q^j) {q^{-\delta K j}}
\notag
\\
&\ll&
q^{-\delta K M} + {q^{-\delta K M}} \sum_{j \leq M} q^j + \sum_{j > M} {q^{(1- \delta K)j}}
\notag
\\
&=& O(q^{(1 - \delta K) M})
\notag
\\
&=& o_M(1).
\notag
\end{eqnarray}
\end{proof}

\section{Proof of Theorem \ref{main 1} }
\label{sec proof of main 1}

In this section, we consider the probability space
$\mathcal{S}_M( \gamma ; S \ \text{mod } n_0 )$, where
we let $\gamma = \frac{7}{11}$, and let
$S$ to be a non-empty subset of $\mathbb{G}_N$
satisfying the conditions of Theorem \ref{thm 2.1 in C}.
The basic strategy is as follows.
We use the Borell-Cantelli Lemma (Theorem \ref{Borel-C}) to show that in the probability space $\mathcal{S}_M( \gamma ; S \ \text{mod } n_0 )$,
``most'' of the sequences, in other words with probability $1$, has ``many'' representations of $n$ as a sum of three of its elements
for all $n \in \mathbb{F}_q[t]$ with $\deg n $ sufficiently large. We then show that out of these sequences, there exists
a sequence such that even after removing some of its elements to make it $B_2[2]$, it still has at least one
representation of $n$ as a sum of three of its elements for each $n \in \mathbb{F}_q[t]$ with $\deg n $ sufficiently large.

For each $n \in \mathbb{F}_q[t]$, we consider the following collection of sets
$$
\mathcal{Q}_n = \{ \theta = \{ x_1, x_2, x_3 \} \subseteq \mathbb{F}_q[t] : x_1 + x_2 + x_3 = n, \ x_i \not \equiv x_j (\text{mod }n_0) \text{ for } \ i \not = j  \}.
$$
Given a sequence of polynomials $\omega$, we let
$$
\mathcal{Q}_n(\omega) = \{ \theta \in \mathcal{Q}_n : \theta \subseteq \omega \}.
$$

We define
$$
\mathcal{T}_n = \{ \overline{x} = (x_1, x_2, x_3, x_4, x_5, x_6, x_7,x_8) : \overline{x} \text{ satisfies Cond} (\mathcal{T}_n)  \},
$$
where
\begin{eqnarray}
\phantom{1122}
\text{Cond} (\mathcal{T}_n)
=
\begin{cases}
\{ x_1, x_2, x_3 \} \in \mathcal{Q}_n, \\
x_1 + x_4 = x_5 + x_6 = x_7 + x_8, \ \ \  \{x_1, x_4\} \not =  \{x_5, x_6 \} \not =  \{x_7, x_8 \},\\
x_1 \equiv x_5 \equiv x_7 \ (\text{mod }n_0), \ x_4 \equiv x_6\equiv x_8 \ (\text{mod }n_0).
\end{cases}
\end{eqnarray}
We also let
$$
\mathcal{T}_n(\omega) = \{ \overline{x} \in \mathcal{T}_n : \text{Set}(\overline{x}) \subseteq \omega \}.
$$

The \textit{$B_2[2]$-lifting process} of a sequence $\omega$ consists of removing from
$\omega$ those elements $a_1 \in \omega$ such that there exist $a_2, a_3, a_4, a_5, a_6 \in \omega$
with $a_1 + a_2 = a_3 + a_4 =  a_5 + a_6 $ and $\{a_1, a_2\} \not =  \{a_3, a_4 \} \not =  \{a_5, a_6 \}$.
We denote by $\omega_{B_2[2]}$ the resulting $B_2[2]$ sequence obtained by applying this process to $\omega$.

The quantity $|\mathcal{T}_n(\omega)|$ provides an upper bound for the number of representations of $n$ counted in $\mathcal{Q}_n(\omega)$
that are destroyed in the $B_2[2]$-lifting process of $\omega$ for the following reason.
Suppose that $\theta = \{x_1, x_2, x_3 \} \in \mathcal{Q}_n(\omega)$ contains an element,
say $x_1$, which is removed in the $B_2[2]$-lifting process.
Then, there exist $x_4, x_5, x_6, x_7, x_8 \in \omega$, which satisfy
$x_1 + x_4 = x_5 + x_6 = x_7 + x_8$ and $\{ x_1, x_4 \} \not = \{ x_5, x_6 \} \not =  \{x_7, x_8 \}$.
Since all $x_i \equiv S (\text{mod }n_0)$ and
$S$ is a Sidon set in $\mathbb{G}_N$, interchanging $x_5$ with $x_6$, and $x_7$ with $x_8$
if necessary, we have $x_1 \equiv x_4 \equiv x_7 \ (\text{mod }n_0)$
and $x_5 \equiv x_6 \equiv x_8 \ (\text{mod }n_0)$.
Thus, we have a map from the set of $\theta \in \mathcal{Q}_n(\omega)$ destroyed in
the $B_2[2]$-lifting process to $\mathcal{T}_n(\omega)$, and it is easy to see that this map is injective.
Consequently, we have
\begin{equation}
\label{ineq 1}
|\mathcal{Q}_n( \omega_{B_2[2]} )| \geq |\mathcal{Q}_n( \omega )| - |\mathcal{T}_n( \omega )|.
\end{equation}
Therefore, Theorem \ref{main 1} is established if we can prove that there exists a sequence $\omega_0$
such that
for any $n \in \mathbb{F}_q[t] \backslash \{ 0 \}$ with sufficiently
large degree, we have $| \mathcal{Q}_n( \omega_0 )| \gg q^{\delta \deg n} $ for some $\delta > 0$,
and $| \mathcal{T}_n(\omega_0)| \ll 1$.
We show that in some sense there are many sequences satisfying the former
condition, and then we show it is also the case for the latter condition.
These tasks are accomplished in Propositions
\ref{lower bound Q} and \ref{upper bound T_n}.
We then prove that there exist sequences
satisfying both conditions.
Before we get into the proofs of these propositions,
we list three useful estimates. However, we postpone their proofs to Section \ref{calculations 1}.
\begin{lem}
\label{E Q_n}
We have that
$$
\mathbb{E}(| \mathcal{Q}_n(\omega) |) \gg q^{(1/11) \deg n},
$$
for $n \in \mathbb{F}_q[t] \backslash \{ 0 \}$ with $\deg n$ sufficiently large.
\end{lem}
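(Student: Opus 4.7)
The plan is to expand the expectation as a sum over triples, restrict to a convenient subfamily of triples constructed from Theorem~\ref{thm 2.1 in C}, count them, and multiply by the per-triple probability.

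First, by linearity and the independence of the events $\{x \in \omega\}$,
\begin{equation*}
\mathbb{E}(|\mathcal{Q}_n(\omega)|) \;=\; \sum_{\{x_1,x_2,x_3\} \in \mathcal{Q}_n} \mathbb{P}(x_1 \in \omega)\,\mathbb{P}(x_2 \in \omega)\,\mathbb{P}(x_3 \in \omega).
\end{equation*}
I keep only triples whose coordinates have degrees at most $D := \deg n$, so each factor is at least $q^{-\gamma D}$ and the product is at least $q^{-3\gamma D} = q^{-21D/11}$. It is then enough to produce $\gg q^{2(D-N)}$ such unordered triples.

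To produce them, apply Theorem~\ref{thm 2.1 in C} to the reduction of $n$ modulo $n_0 = t^N$: since $\mathbb{G}_N = \mathbb{G}_{4M_0}$, there exist distinct $s_1, s_2, s_3 \in S$ with $s_1 + s_2 + s_3 \equiv n \pmod{n_0}$. Fix such a representation and write every candidate triple in the form $x_i = s_i + n_0 y_i$, so the constraint $x_1 + x_2 + x_3 = n$ becomes
\begin{equation*}
y_1 + y_2 + y_3 = m, \qquad m := \frac{n - s_1 - s_2 - s_3}{n_0},
\end{equation*}
an identity in $\mathbb{F}_q[t]$ with $\deg m \leq D - N$. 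Letting $y_1, y_2$ range independently over polynomials of degree strictly less than $D - N$ and defining $y_3 := m - y_1 - y_2$, each $y_i$ has $\deg y_i \leq D - N$, hence $\deg x_i \leq D$. This yields $q^{2(D-N)}$ ordered triples, so $\gg q^{2(D-N)}$ unordered triples $\{x_1, x_2, x_3\}$; and since the $s_i$ are pairwise distinct modulo $n_0$, so are the $x_i$, confirming membership in $\mathcal{Q}_n$.

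I still have to verify that each such triple contributes a non-zero probability factor, i.e.\ that every $x_i$ is non-zero, has $\deg x_i > M$, and is congruent to an element of $S$ modulo $n_0$. The third is automatic from $x_i \equiv s_i \pmod{n_0}$. For the first two, the only triples to discard are those where some $y_i = 0$ and simultaneously $s_i$ is small or zero; the number of such exceptional triples is $O(q^{D-N})$, which is negligible compared with $q^{2(D-N)}$ once $D$ is large. Combining the count with the per-triple probability bound gives
\begin{equation*}
\mathbb{E}(|\mathcal{Q}_n(\omega)|) \;\gg\; q^{2(D-N)} \cdot q^{-3\gamma D} \;=\; q^{(2 - 3\gamma)D - 2N} \;=\; q^{D/11 - 2N} \;\gg\; q^{D/11},
\end{equation*}
since $N$ is a fixed constant. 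The only delicate point is the bookkeeping on the degree ranges and the exclusion of the negligible exceptional triples; everything else is routine once the parametrization $x_i = s_i + n_0 y_i$ is set up.
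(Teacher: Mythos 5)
Your argument follows essentially the same route as the paper's: expand $\mathbb{E}(|\mathcal{Q}_n(\omega)|)$ as a sum over triples, use Theorem \ref{thm 2.1 in C} to find distinct $s_1, s_2, s_3 \in S$ with $s_1 + s_2 + s_3 \equiv n \pmod{n_0}$, parametrize a subfamily via $x_i = s_i + n_0 y_i$, count $\gg q^{2(\deg n - N)}$ valid triples, and multiply by the per-triple probability $\geq q^{-3\gamma\deg n}$. The only structural difference is that the paper restricts to $\deg y_i = \deg n - N$ \emph{exactly} (so $\deg x_i = \deg n > M$ is automatic once $\deg n > M$, and no exceptional-set estimate is needed), counting solutions coordinatewise, whereas you allow $\deg y_i \leq \deg n - N$ and then discard degenerate triples.

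The one imprecision is in that discarding step. You assert that the only triples to remove are those with some $y_i = 0$ ``and simultaneously $s_i$ small or zero,'' but the actual constraint for $\mathbb{P}(x_i \in \omega)$ to be nonzero is $\deg x_i > M$. Since $\deg s_i < N$ for every $s_i \in S$ and $\deg x_i = N + \deg y_i$ when $y_i \neq 0$, you must discard \emph{all} triples with some $\deg y_i \leq M - N$ once $M \geq N$ --- which is the regime eventually used in Section \ref{sec proof of main 1}, where $M$ is taken large. The count of such triples is $O\!\left(q^{\deg n + M - 2N + 1}\right)$, not $O\!\left(q^{\deg n - N}\right)$ as you wrote. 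With $M$ and $N$ fixed this is still $o\!\left(q^{2(\deg n - N)}\right)$ as $\deg n \to \infty$, so your conclusion survives, but the exceptional set you identified is too small and your stated bound on it is incorrect. The paper's choice of exact degree for the $y_i$ sidesteps this bookkeeping entirely.
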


Recall from ~(\ref{def Delta}), the definition of $\Delta (\cdot)$.
\begin{prop}
\label{Delta Q_n}
We have that
$$
\Delta( \mathcal{Q}_n) \ll q^{-\frac{2}{11} \deg n},
$$
for $n \in \mathbb{F}_q[t] \backslash \{ 0 \}$ with $\deg n$ sufficiently large.
\end{prop}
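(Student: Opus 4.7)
The plan is to reduce $\Delta(\mathcal{Q}_n)$ to a weighted sum over $5$-tuples and then estimate it by a layer analysis. Since both $\theta$ and $\theta'$ sum to $n$, if $|\theta \cap \theta'| = 2$ the third element is forced to coincide and $\theta = \theta'$, contradicting $\theta \sim \theta'$; hence $|\theta \cap \theta'| = 1$, and any such pair can be parametrized (up to the symmetry of swapping $\theta$ and $\theta'$ and relabelling) as $\theta = \{x_1, x_2, x_3\}$ and $\theta' = \{x_1, y_2, y_3\}$ with $x_2 + x_3 = y_2 + y_3 = n - x_1$ and $\{x_2, x_3\} \neq \{y_2, y_3\}$. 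Using the independence of the events $\{x \in \omega\}$ under the probability measure of $\mathcal{S}_M(\gamma; S \text{ mod } n_0)$ with $\gamma = 7/11$, this yields
\[
\Delta(\mathcal{Q}_n) \ll \sum q^{-\gamma(\deg x_1 + \deg x_2 + \deg x_3 + \deg y_2 + \deg y_3)},
\]
where the sum ranges over admissible $5$-tuples (each coordinate non-zero, of degree $> M$, congruent to $S \pmod{n_0}$, and satisfying the displayed distinctness conditions).

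Next, I would fix $x_1$, set $m := n - x_1$, and bound the remaining sum by $\Sigma(m)^2$, where
\[
\Sigma(m) := \sum_{\substack{a + b = m \\ a, b \equiv S\,(\mathrm{mod}\, n_0) \\ \deg a, \deg b > M}} q^{-\gamma(\deg a + \deg b)}.
\]
Because $S$ is a Sidon set in $\mathbb{G}_N$, only $O(1)$ choices of residues $(s_2, s_3) \in S^2$ with $s_2 + s_3 \equiv m \pmod{n_0}$ arise, and for each one $a$ varies in a single coset modulo $t^N$ with $b = m - a$ determined. Splitting the sum into layers $k = \deg a$, the coset contains $\ll q^{\max(k-N,0)}$ polynomials, and $\deg b = \max(k, \deg m)$ outside of lower-order cancellations. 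One obtains two geometric series with common ratios $q^{1-2\gamma} = q^{-3/11}$ (for $k > \deg m$) and $q^{1-\gamma} = q^{4/11}$ (for $N \leq k \leq \deg m$); both are dominated by $k \approx \deg m$ and give
\[
\Sigma(m) \ll q^{(1-2\gamma)\deg m - N} = q^{-\tfrac{3}{11}\deg m - N}.
\]

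Finally, squaring and summing in $x_1$ yields
\[
\Delta(\mathcal{Q}_n) \ll \sum_{x_1} q^{-\gamma \deg x_1} \Sigma(n - x_1)^2 \ll \sum_{x_1} q^{-\gamma \deg x_1 - \tfrac{6}{11}\deg(n - x_1) - 2N},
\]
and since $\deg(n - x_1) = \max(\deg n, \deg x_1)$ generically, one more layer analysis (in which the exponent $(1-\gamma) - \tfrac{6}{11} = -\tfrac{2}{11}$ is the decisive numerical coincidence) shows that the dominant contribution comes from $\deg x_1 \approx \deg n$, delivering the bound $\Delta(\mathcal{Q}_n) \ll |S|\, q^{-3N}\, q^{-\tfrac{2}{11}\deg n} \ll q^{-\tfrac{2}{11}\deg n}$, as desired. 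The principal obstacles are the careful bookkeeping of residue restrictions mod $t^N$ in each layer (so that the counts $q^{k-N}$ are tracked correctly) and the verification that the ``diagonal'' configurations with $\{x_2, x_3\} = \{y_2, y_3\}$ contribute a strictly lower order, which legitimizes bounding the inner sum by the full square $\Sigma(m)^2$.
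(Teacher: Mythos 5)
Your plan is correct and follows essentially the same route as the paper: reduce to $|\theta \cap \theta'|=1$, factor out the common element, bound the inner sum $\sum_{x_2+x_3=n-x_1}q^{-\gamma(\deg x_2+\deg x_3)}$ by $\ll q^{(1-2\gamma)\deg(n-x_1)}$, square, and then estimate the outer sum over $x_1$; the paper simply packages both of these estimates as invocations of Lemma~\ref{basic 1} rather than redoing the layer analysis inline, and it does not need the Sidon/residue refinement that earns you the extra $q^{-N}$ factors. Two small points to tidy in a full write-up: your stated bound $\Sigma(m)\ll q^{(1-2\gamma)\deg m - N}$ should read $q^{(1-2\gamma)\max\{\deg m, M\}-N}$ (as in Lemma~\ref{basic 1}) to cover $\deg m \leq M$, and likewise the edge layers $\deg a = \deg m$ and $\deg x_1 = \deg n$ need the explicit treatment given in the proof of Lemma~\ref{basic 1}; also, the closing remark about the ``diagonal'' configurations $\{x_2,x_3\}=\{y_2,y_3\}$ is unnecessary, since bounding the inner sum by the full square $\Sigma(m)^2$ is already an overcount and requires no verification that the diagonal is lower order.
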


\begin{lem}
\label{exp Tn}
We have that
$$
\mathbb{E}(| \mathcal{T}_n(\omega)|) \ll q^{-\frac{1}{11} \max \{ \deg n, M \} }.
$$
\end{lem}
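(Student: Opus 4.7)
The plan is a direct expansion of the expectation using linearity combined with the independence of the events $\{x \in \omega\}$. By the definition of the probability space,
$$
\mathbb{E}(|\mathcal{T}_n(\omega)|) = \sum_{\overline{x} \in \mathcal{T}_n} \prod_{y \in \mathrm{Set}(\overline{x})} \mathbb{P}(\{y \in \omega\}),
$$
and only 8-tuples with $\deg x_i > M$ and $x_i \equiv S \pmod{n_0}$ at every coordinate contribute; for such tuples each factor equals $q^{-\gamma \deg x_i} = q^{-(7/11) \deg x_i}$. The three linear constraints defining $\mathcal{T}_n$, namely $x_1 + x_2 + x_3 = n$, $x_1 + x_4 = x_5 + x_6$ and $x_1 + x_4 = x_7 + x_8$, leave five degrees of freedom; I would take $x_1, x_2, x_4, x_5, x_7$ as free parameters and view $x_3 = n - x_1 - x_2$, $x_6 = x_1 + x_4 - x_5$ and $x_8 = x_1 + x_4 - x_7$ as determined.

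Next I would stratify the sum by the residue choices in $S$ modulo $n_0$ and by the degree tuple $(D_1, D_2, D_4, D_5, D_7)$. The congruences in $\mathrm{Cond}(\mathcal{T}_n)$ force $s_1 = s_5 = s_7$ and $s_4 = s_6 = s_8$, and the linear relations determine the residues of $x_3$, $x_6$, $x_8$ in terms of the rest. For a fixed residue and a fixed degree $D_i > M$, the number of admissible values of a free coordinate $x_i$ is $\ll q^{D_i - N}$, and its probability weight is $q^{-\gamma D_i}$, yielding per free variable a factor of $q^{(4/11) D_i - N}$. Each determined coordinate $x_j$ contributes only its probability weight $q^{-\gamma D_j}$, with $D_j$ controlled by the corresponding linear relation, so that for instance $D_3 \leq \max\{\deg n, D_1, D_2\}$. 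Since $x_1 + x_2 + x_3 = n$ forces $\max\{D_1, D_2, D_3\} \geq \deg n$, combined with $D_i > M$ for every $i$, at least one coordinate has degree at least $A := \max\{\deg n, M\}$.

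In the dominant regime, where all eight coordinates are distinct and no anomalous cancellation occurs among $x_3, x_6, x_8$, setting all eight degrees equal to a common value $D \geq A$ gives a contribution of $q^{5(D-N) - 8\gamma D} = q^{-D/11 - 5N}$, since $5 \cdot \frac{4}{11} - 3 \cdot \frac{7}{11} = -\frac{1}{11}$; summing the geometric series over $D \geq A$ then yields $\ll q^{-A/11}$. The specific value $\gamma = 7/11$ is calibrated precisely for this balance. The main obstacle is the accompanying case analysis: handling the coincidence cases in which some of $x_1, \ldots, x_8$ collide (reducing $|\mathrm{Set}(\overline{x})|$ and altering the counting), the anomalous cancellation cases in which $D_3$, $D_6$ or $D_8$ drops well below the generic value (so that the free coordinates are less constrained by the linear relations), and controlling the residue multiplicities uniformly over $S$. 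These subcases give strictly smaller contributions but require careful bookkeeping of the same flavor as the auxiliary estimates in Section \ref{appendix}, and I would defer the detailed computation to Section \ref{calculations 1}.
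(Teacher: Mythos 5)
Your exponent calculus in the dominant regime is correct and matches the paper's: with $\gamma = 7/11$, five ``free'' coordinates each contributing $q^{(1-\gamma)D}$ and three determined coordinates each contributing $q^{-\gamma D}$ yields $q^{(5-8\gamma)D} = q^{-D/11}$, and summing geometrically over $D \geq \max\{\deg n, M\}$ gives the claimed bound. However, the proposal has a real gap where you write that the coincidence and cancellation subcases ``give strictly smaller contributions.'' That is precisely the hard part, and it is not obvious. In the cancellation regime (say $\deg(n - x_1 - x_2)$ small while $\deg x_1, \deg x_2$ are large), the probability weight $q^{-\gamma \deg x_3}$ becomes \emph{larger}, not smaller; the bound only survives because the set of pairs $(x_1,x_2)$ realizing such cancellation is correspondingly thinner, and quantifying this tradeoff is exactly what Lemma \ref{basic 1} and its relatives do. Likewise, the coincidence cases (e.g.\ $x_1 = x_4$ or $x_6 = x_2$) genuinely increase the exponent, since a repeated element contributes its factor $q^{-\gamma \deg}$ only once, and they also reduce the degrees of freedom; these two effects compete and must be balanced case by case. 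The paper devotes most of the proof to precisely this bookkeeping: it first classifies which repetitions are logically possible given the congruence constraints, obtaining exactly six sums $S_1,\dots,S_6$, and then bounds each with the $\sigma_{\alpha,\beta}$ machinery of Section \ref{appendix}, with $S_6$ explicitly flagged as the most delicate.

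One further difference worth noting: the paper's $S_i$ drop the residue constraints $x_i \equiv S \pmod{n_0}$ entirely, which only increases the sum and so is harmless for an upper bound; it also removes any need to track residue multiplicities or the factors of $q^{-N}$ and $|S|^3$ appearing in your stratification. Your plan of stratifying by degree tuple and residue class re-derives, in a more manual and error-prone way, what Lemma \ref{basic 1} already packages cleanly. So the approach is the same in spirit --- expand the expectation, classify coincidences, bound degree-weighted sums --- but the proposal stops short of the part that carries the proof, and the assertion that the subcases are strictly smaller must be replaced by an actual estimate.
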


We now prove the following proposition.
\begin{prop}
\label{lower bound Q}
We have that
$$
\mathbb{P} ( \{ |\mathcal{Q}_n(\omega)| \gg q^{\frac{1}{11} \deg n} \} ) =1,
$$
for $n \in \mathbb{F}_q[t] \backslash \{ 0 \}$ with $\deg n$ sufficiently large.
\end{prop}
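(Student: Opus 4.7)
The plan is to apply Janson's inequality (Theorem \ref{Janson}) to the family $\mathcal{Q}_n$ in the probability space $\mathcal{S}_M(\gamma; S\ \text{mod } n_0)$, and then combine the resulting tail bound with the Borel--Cantelli lemma (Theorem \ref{Borel-C}) to conclude that the lower bound $|\mathcal{Q}_n(\omega)| \gg q^{\frac{1}{11}\deg n}$ fails for only finitely many $n$ with probability $1$.

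First I would set $\mu = \mathbb{E}(|\mathcal{Q}_n(\omega)|)$. By Lemma \ref{E Q_n}, we have $\mu \gg q^{\frac{1}{11}\deg n}$ once $\deg n$ is sufficiently large, while by Proposition \ref{Delta Q_n} we have $\Delta(\mathcal{Q}_n) \ll q^{-\frac{2}{11}\deg n}$. In particular $\Delta(\mathcal{Q}_n) < \mu$ once $\deg n$ is large enough, so the second form of Janson's inequality applies and yields
\[
\mathbb{P}\bigl(\{|\mathcal{Q}_n(\omega)| \leq \mu/2\}\bigr) \leq \exp(-\mu/16) \leq \exp\bigl(-c\, q^{\frac{1}{11}\deg n}\bigr)
\]
for some absolute constant $c>0$, uniformly in $n$ with $\deg n$ sufficiently large.

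Next I would set $E_n = \{\omega \in \Omega : |\mathcal{Q}_n(\omega)| \leq \mu/2\}$ and estimate $\sum_n \mathbb{P}(E_n)$. Since there are at most $q^{j+1}$ polynomials $n$ with $\deg n = j$, we obtain
\[
\sum_{n \in \mathbb{F}_q[t]\setminus\{0\}} \mathbb{P}(E_n) \ll \sum_{j \geq 0} q^{j+1} \exp\bigl(-c\, q^{j/11}\bigr) < \infty,
\]
since the doubly-exponential decay in $j$ dominates the factor $q^{j+1}$. Therefore the hypothesis of the Borel--Cantelli lemma is satisfied, and with probability $1$ only finitely many events $E_n$ occur. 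Equivalently, with probability $1$ there exists $D = D(\omega)$ such that $|\mathcal{Q}_n(\omega)| > \mu/2 \gg q^{\frac{1}{11}\deg n}$ for every $n \in \mathbb{F}_q[t]\setminus\{0\}$ with $\deg n \geq D$, which is the claim.

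No step is a genuine obstacle given the lemmas stated earlier; the only substantive checks are that $\mu$ dominates $\Delta(\mathcal{Q}_n)$ for large $\deg n$ (so that the strong form of Janson applies) and that the resulting probabilities are summable over $n$, both of which follow immediately from the estimates in Lemma \ref{E Q_n} and Proposition \ref{Delta Q_n}. The real work is deferred to the proofs of those two estimates in Section \ref{calculations 1}.
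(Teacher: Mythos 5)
Your proposal is correct and follows exactly the argument in the paper: apply the strong form of Janson's inequality to $\mathcal{Q}_n$ using the bounds $\mu_n \gg q^{\frac{1}{11}\deg n}$ from Lemma \ref{E Q_n} and $\Delta(\mathcal{Q}_n) \ll q^{-\frac{2}{11}\deg n}$ from Proposition \ref{Delta Q_n}, sum the resulting tail bounds over degree levels, and conclude via Borel--Cantelli. The only cosmetic difference is that the paper writes the count of degree-$j$ polynomials as $q^{j+1}-q^j$ while you use the upper bound $q^{j+1}$, which changes nothing.
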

\begin{proof}
We apply Theorem \ref{Janson} with $\mathcal{F} = \mathcal{Q}_n$ and
$Y(\omega) = |\mathcal{Q}_n(\omega)| = |\{ \theta \in \mathcal{Q}_n : \theta \subseteq \omega \}|$,
where $\omega$ is a random sequence in $\mathcal{S}_M( 7/11 ; S \ \text{mod } n_0 )$.
We proved that $\mu = \mu_n = \mathbb{E}(\mathcal{Q}_n( \omega )) \gg q^{\frac{1}{11}\deg n}$ in Lemma \ref{E Q_n},
and $\Delta( \mathcal{Q}_n) \ll q^{-\frac{2}{11}\deg n}$ in Proposition \ref{Delta Q_n}.
Hence for $\deg n$ sufficiently large, we have $\Delta( \mathcal{Q}_n) < \mu_n$. Then, Theorem \ref{Janson} implies that
$$
\mathbb{P}( \{ |\mathcal{Q}_n(\omega)| \leq \mu_n/2 \}) \leq \exp( - \mu_n/ 16 ).
$$
Therefore, we obtain that for some $C, C'>0$ and $T \in \mathbb{N}$, we can write
\begin{eqnarray}
\sum_{n \in \mathbb{F}_q[t]} \mathbb{P}( \{ | \mathcal{Q}_n(\omega)| \leq \mu_n/2 \} )
&<& C' + \sum_{j = T}^{\infty} (q^{j+1}-q^j) \exp(- C q^{\frac{1}{11} j})
\notag
\\
&<& C' + (q - 1) \sum_{j = T}^{\infty} q^j \exp(- C q^{\frac{1}{11} j})
\notag
\\
&<& \infty.
\notag
\end{eqnarray}
Thus, Theorem \ref{Borel-C} implies that with probability $1$, we have
$|\mathcal{Q}_n(\omega)| > \mu_n/2 \gg q^{\frac{1}{11} \deg n}$ for all $n \in \mathbb{F}_q[t] \backslash \{ 0 \}$ with $\deg n$ sufficiently large.
\end{proof}

For each $r \in \mathbb{F}_q[t]$, we define the following families of vectors, whose expected values are
bounded in Lemma \ref{family bound}:
\begin{eqnarray}
\label{family set}
\phantom{12345} \mathcal{U}_{r} &=& \{ \overline{x} = (x_1, x_2) : x_1 + x_2 = r, x_1 \not = x_2 \},
\\
\mathcal{V}_{r} &=& \{ \overline{x} = (x_1, x_2) : x_1 - x_2 = r, x_1 \not = x_2 \},
\notag
\\
\mathcal{W}_r &=& \{ \overline{x} = (x_4, x_5, x_6, x_7, x_8) : x_5 + x_6 - x_4 = x_7 + x_8 - x_4 = r, x_i \not = x_j \ (i \not =j) \}.
\notag
\end{eqnarray}

We prove the following lemma in Section \ref{calculations 1}.
\begin{lem}
\label{family bound}
We have the following bounds on the expectations.
\newline
\newline
i) $\mathbb{E}(|  \mathcal{U}_{r}(\omega)|) \ll q^{-\frac{3}{11} \max \{ \deg r, M \} }$.
\newline
ii) $\mathbb{E}(| \mathcal{V}_{r}(\omega)|) \ll q^{-\frac{3}{11} \max \{ \deg r, M \} }$.
\newline
iii) $\mathbb{E}(| \mathcal{W}_{r}(\omega)|) \ll q^{ - \frac{2}{11} \max \{ \deg r, M \} }$. 
\end{lem}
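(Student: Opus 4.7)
The plan is to compute each expectation directly by expanding into a sum over tuples, bounding probabilities via $\mathbb{P}(\{x \in \omega\}) = q^{-(7/11)\deg x}$ when applicable, and exploiting the Sidon property of $S$ to restrict the residue classes mod $n_0$ that can contribute. Throughout I write $D = \max\{\deg r, M\}$. Part (iii) will be reduced to part (i) by conditioning on $x_4$.

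For part (i), independence of the events $\{x \in \omega\}$ gives
\[
\mathbb{E}(|\mathcal{U}_r(\omega)|) = \sum_{\substack{x_1 + x_2 = r \\ x_1 \neq x_2}} \mathbb{P}(\{x_1 \in \omega\})\,\mathbb{P}(\{x_2 \in \omega\}),
\]
and the summand is nonzero only when both $x_1, x_2$ have degree $> M$ and their residues mod $n_0$ lie in $S$. Since $S$ is a Sidon set in $\mathbb{G}_N$, one has $r_{S+S}(y) \leq 2$ for every $y$, so the residue of $x_1$ mod $n_0$ is forced into at most $O(1)$ classes determined by $r \bmod n_0$. I then partition the sum according to the pair $(\deg x_1, \deg x_2)$: because $\deg(x_1 + x_2) \leq \max\{\deg x_1, \deg x_2\}$, the larger of the two degrees is at least $\deg r$, and both are at least $M+1$; in particular $\max\{\deg x_1, \deg x_2\} \geq D$. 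Counting polynomials of each fixed degree in a prescribed residue class modulo $n_0 = t^N$ and summing the resulting geometric series in the three subcases $\deg x_1 \gtrless \deg x_2$ and $\deg x_1 = \deg x_2$ yields $\mathbb{E}(|\mathcal{U}_r(\omega)|) \ll q^{-(3/11)D}$. Part (ii) is handled by the same argument, using the stronger Sidon bound $r_{S-S}(y) \leq 1$ for $y \neq 0$ and the same degree inequality $\max\{\deg x_1, \deg x_2\} \geq \deg r$ for $x_1 - x_2 = r$.

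For part (iii), the trick is to fix $x_4$ and set $r' = x_4 + r$; the remaining constraints then say that $(x_5, x_6)$ and $(x_7, x_8)$ are each ordered pairs from $\mathcal{U}_{r'}$. Dropping the global distinctness requirement (which only enlarges the count) and invoking independence gives
\[
\mathbb{E}(|\mathcal{W}_r(\omega)|) \leq \sum_{x_4} \mathbb{P}(\{x_4 \in \omega\})\, \mathbb{E}(|\mathcal{U}_{x_4 + r}(\omega)|)^2,
\]
and applying (i) bounds the inner factor by $q^{-(6/11)\max\{\deg(x_4 + r),\, M\}}$. I then split the outer sum at $\deg x_4 = \deg r$: when $\deg x_4 \leq \deg r$ the inner factor equals $q^{-(6/11)D}$ (assuming $\deg r \geq M$, with the other case being similar) and I sum $|S|\,q^{d - N}\,q^{-(7/11)d}$ over $M < d \leq \deg r$; when $\deg x_4 > \deg r$ the inner exponent combines with $q^{-(7/11)d}$ to give $q^{-(13/11)d}$, summed against the polynomial count $|S|\,q^{d - N}$ over $d > D$. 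Both ranges contribute $\ll q^{-(2/11)D}$ after absorbing the prefactor $|S|\,q^{-N} \asymp q^{-N/2}$ into the implied constant (recall $|S| \asymp q^{N/2}$ from the construction in Theorem \ref{thm 2.1 in C}).

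The main obstacle I expect lies in the boundary cases of the degree decomposition: in (i), the subcase $\deg x_1 = \deg x_2$ permits cancellation of leading coefficients and must be shown not to weaken the $-3/11$ exponent, and in (iii) the split at $\deg x_4 = \deg r$ must be balanced so that neither range dominates the target $-2/11$. Once these boundaries are dealt with carefully, each geometric series telescopes cleanly and the stated bounds follow.
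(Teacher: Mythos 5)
Your overall approach matches the paper's: parts (i) and (ii) are the two-variable degree-weighted sum $\sigma_{\gamma,\gamma}(r;M)$ with $\gamma = 7/11$, and part (iii) is handled by conditioning on $x_4$, applying (i) to the pair sums, and then evaluating the resulting one-variable sum by a degree decomposition. The paper packages exactly these two computations into Lemma \ref{basic 1} (which it applies to get (i), (ii), and the inner square in (iii)) and Lemma \ref{basic 3} (for the outer $x_4$-sum). Two comments on your execution.

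First, a minor point: the Sidon-set reduction you invoke in (i)/(ii), where you use $r_{S+S}(y)\leq 2$ and $r_{S-S}(y)\leq 1$ to pin down residue classes, is unnecessary. The paper simply drops the congruence constraints and bounds $\mathbb{E}(|\mathcal{U}_r(\omega)|) \leq \sum_{\deg x_1,\deg x_2>M,\;x_1+x_2=r} q^{-\gamma\deg x_1}q^{-\gamma\deg x_2}$, i.e., sums over all polynomials. Restricting to residue classes only tightens the constant, not the exponent; it does no harm but buys you nothing.

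Second, and more substantively, your treatment of the outer sum in (iii) has a gap at the boundary $\deg x_4 = \deg r$ that you flag but do not resolve. You assert that ``when $\deg x_4 \leq \deg r$ the inner factor equals $q^{-(6/11)D}$,'' but this is false when $\deg x_4 = \deg r$ and the leading coefficient of $x_4$ equals $-\mathrm{lead}[r]$: in that case $\deg(x_4+r)$ ranges over all values below $\deg r$, and the factor $q^{-(6/11)\max\{\deg(x_4+r),\,M\}}$ is correspondingly much larger than $q^{-(6/11)D}$. If one carries out this subcase (writing $x_4 = -\mathrm{lead}[r]\,t^{\deg r} + y$ with $y \in \mathbb{G}_{\deg r}$ and summing $q^{-(6/11)\max\{\deg z, M\}}$ over $z = x_4 + r$ of degree $< \deg r$ using the paper's estimate (\ref{sum bound 2})), the contribution is $q^{-(7/11)\deg r}\cdot q^{(5/11)\deg r} = q^{-(2/11)\deg r}$ — the same order as the non-cancellation ranges, so the final bound survives. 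But this is a computation, not an afterthought, and your argument as written would silently drop a contribution of the dominant order. This is exactly the content of the $\deg x = \deg r$ case in the paper's proof of Lemma \ref{basic 3} (see the display (\ref{W sum 2'})), which separates the leading coefficients $c\neq -\mathrm{lead}[r]$ from $c = -\mathrm{lead}[r]$ and treats the latter by reindexing and applying (\ref{sum bound 2}). The analogous cancellation issue in (i)/(ii), which you do flag, is handled the same way in the proof of Lemma \ref{basic 1}.
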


\begin{lem}
\label{family}
Let $\mathcal{F}_r$ be any of the three families in ~(\ref{family set}), then we have
$$
\mathbb{P}( \{ \mathcal{F}_r(\omega) \text{ contains a $12$-d.s.v. for some } r \in \mathbb{F}_q[t] \})
= o_M(1).
$$
\end{lem}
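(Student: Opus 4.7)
The plan is to apply Proposition \ref{K-dsv 1} directly to each of the three families $\mathcal{U}_r$, $\mathcal{V}_r$, $\mathcal{W}_r$ in turn, using the exponential decay bounds on their expectations provided by Lemma \ref{family bound}. Proposition \ref{K-dsv 1} requires only two inputs: an estimate of the shape $\mathbb{E}(|\mathcal{F}_r(\omega)|) \ll q^{-\delta \max\{\deg r, M\}}$ for some $\delta > 0$, and the choice of a $K$ satisfying $K > 1/\delta$. Both are available to us here.

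First I would handle $\mathcal{U}_r$. Lemma \ref{family bound} (i) gives decay with $\delta = 3/11$, so the condition $K > 1/\delta = 11/3$ is required. Since $12 > 11/3$, Proposition \ref{K-dsv 1} applied with $\mathcal{F}_r = \mathcal{U}_r$ and $K = 12$ yields
\[
\mathbb{P}(\{ \mathcal{U}_r(\omega) \text{ contains a } 12\text{-d.s.v.\ for some } r \in \mathbb{F}_q[t] \}) = o_M(1).
\]
The same argument applies verbatim to $\mathcal{V}_r$, using Lemma \ref{family bound} (ii), which again gives $\delta = 3/11 < 3/12$, so $K = 12$ is again admissible.

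For $\mathcal{W}_r$, the bound from Lemma \ref{family bound} (iii) is weaker: $\delta = 2/11$, so the threshold becomes $K > 11/2 = 5.5$. Again $12 > 11/2$, so Proposition \ref{K-dsv 1} applies with $K = 12$ to yield the same conclusion for $\mathcal{W}_r$. Combining the three cases gives the statement of the lemma. The main (and essentially only) conceptual point is to check that the single universal value $K = 12$ lies above each of the thresholds $1/\delta$ arising from the three families; after that verification, the conclusion is a direct invocation of Proposition \ref{K-dsv 1}, and there is no further calculation to perform.
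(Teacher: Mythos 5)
Your proof is correct and follows the same route as the paper: bound the expectation of $|\mathcal{F}_r(\omega)|$ by $q^{-\delta\max\{\deg r, M\}}$ via Lemma \ref{family bound}, then invoke Proposition \ref{K-dsv 1} with $K=12 > 1/\delta$. The only difference is cosmetic: the paper observes that all three families satisfy the common weakest bound $\mathbb{E}(|\mathcal{F}_r(\omega)|) \ll q^{-\frac{2}{11}\max\{\deg r, M\}}$ and applies the proposition once, whereas you verify the threshold $K > 1/\delta$ family by family; both give the same conclusion. (One small slip in your write-up: for $\mathcal{V}_r$ you wrote ``$\delta = 3/11 < 3/12$,'' which is false as an inequality between those two fractions; you clearly meant $1/\delta = 11/3 < 12$, which is what the argument requires.)
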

\begin{proof}
For any of the three choices of $\mathcal{F}_r$, Lemma \ref{family bound} shows that 
$\mathbb{E}(|\mathcal{F}_r(\omega)|) \ll q^{-\frac{2}{11} \max\{ \deg r, M \} }$. Thus, the result
follows by Proposition \ref{K-dsv 1}.
\end{proof}

We have the following proposition, which is one of the main ingredients to
prove Theorem \ref{main 1}.
\begin{prop}
\label{upper bound T_n}
We have that
$$
\mathbb{P}(\{| \mathcal{T}_n(\omega)| \leq 10^{28} \text{ for all } n \in \mathbb{F}_q[t] \}) \geq 1 - o_M(1).
$$
\end{prop}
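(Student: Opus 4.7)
The plan is to apply Corollary \ref{cor sun and bound} with $H = 8$ (the number of coordinates in each vector of $\mathcal{T}_n$) and $K = 12$. One checks that $8! \cdot ((8^2 - 8 + 1) \cdot 12)^8 = 40320 \cdot 684^8 < 10^{28}$, so it suffices to prove that with probability $1 - o_M(1)$, no $\mathcal{T}_n(\omega)$ contains a vectorial sunflower of $12$ petals for any $n \in \mathbb{F}_q[t]$.

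To establish this, I would argue by contradiction. Suppose some $\mathcal{T}_n(\omega)$ contains a vectorial sunflower of $12$ petals with core-index set $I \subseteq \{1, \dots, 8\}$ and varying-index set $V = \{1, \dots, 8\} \setminus I$, with fixed values $c_i$ for $i \in I$. By definition of vectorial sunflower, the $V$-projections of the $12$ petals form a $12$-d.s.v. I would then perform a case analysis on $I$. Using the defining equations $x_1 + x_2 + x_3 = n$ and $x_1 + x_4 = x_5 + x_6 = x_7 + x_8$, each case falls into one of three scenarios: (i) the fixed values $c_i$ uniquely determine some $x_j$ with $j \in V$, which then must be the same across all petals, contradicting pairwise disjointness of the $V$-projections; (ii) two varying coordinates are linked by an equation of the form $x + y = r$ or $x - y = r$, with $r = r(n, \overline{c})$ fixed, so that a further projection of the $V$-projection yields a $12$-d.s.v. in $\mathcal{U}_r$ or $\mathcal{V}_r$; or (iii), when $1 \in I$ and $\{4, 5, 6, 7, 8\} \subseteq V$, the constraints $x_5 + x_6 - x_4 = x_7 + x_8 - x_4 = c_1$ place the $5$-tuple $(x_4, x_5, x_6, x_7, x_8)$ into $\mathcal{W}_{c_1}$ and produce a $12$-d.s.v.\ there. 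Scenarios (ii) and (iii) contradict Lemma \ref{family} outside an $o_M(1)$ event, while scenario (i) contradicts the sunflower assumption directly.

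The main obstacle is the exhaustive enumeration of the $2^8 = 256$ subsets $I$. Fortunately, the defining relations possess substantial symmetry---the three pairs $(x_1, x_4)$, $(x_5, x_6)$, $(x_7, x_8)$ are interchangeable in the second equation and $(x_2, x_3)$ are symmetric in the first---so the $256$ possibilities collapse to a small number of essentially distinct cases. Summing the resulting $o_M(1)$ bounds over this finite list of cases completes the verification of the hypothesis of Corollary \ref{cor sun and bound}, and the proposition follows.
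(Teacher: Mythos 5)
Your overall strategy matches the paper's: apply Corollary \ref{cor sun and bound} with $H=8$, $K=12$ (the numerical check $8!\cdot 684^8 < 10^{28}$ is correct), reduce to ruling out vectorial sunflowers of $12$ petals, and handle the various core sets $I$ by projecting to the helper families $\mathcal{U}_r$, $\mathcal{V}_r$, $\mathcal{W}_r$ and invoking Lemma \ref{family}.

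However, your taxonomy of scenarios (i)--(iii) has a genuine gap: it does not cover the case $I = \emptyset$. When $I=\emptyset$ there are no fixed coordinates, so scenario (i) is vacuous; the only available constraints, $x_1+x_2+x_3=n$ and $x_1+x_4=x_5+x_6=x_7+x_8$, each involve at least three varying coordinates, so no two varying coordinates are linked by an equation of the form $x+y=r$ or $x-y=r$ with fixed right-hand side, and scenario (ii) does not apply; and scenario (iii) requires $1\in I$, which fails. In this case the $12$ petals themselves form a $12$-d.s.v.\ inside $\mathcal{T}_n$, and none of $\mathcal{U}_r$, $\mathcal{V}_r$, $\mathcal{W}_r$ captures it. The paper handles $I=\emptyset$ by a fourth mechanism: the expectation bound $\mathbb{E}(|\mathcal{T}_n(\omega)|) \ll q^{-\frac{1}{11}\max\{\deg n, M\}}$ from Lemma \ref{exp Tn}, combined with Proposition \ref{K-dsv 1} (here $\delta = 1/11$ and $K=12 > 1/\delta$). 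You would need to add this scenario explicitly; without it, the claim that ``each case falls into one of three scenarios'' is false and the argument does not close.

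A minor related point: you should also explicitly dispose of $I=\{1,\dots,8\}$ (all coordinates fixed), where there is no $j\in V$ to appeal to in scenario (i); this case is trivial since distinct petals cannot agree in every coordinate, but it sits outside your three scenarios as literally stated.
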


\begin{proof}
We claim the following statement:

\textbf{Claim.} With probability $1 - o_M(1)$, $\mathcal{T}_n(\omega)$ does not contain vectorial sunflowers of $12$ petals for
any $n \in \mathbb{F}_q[t]$.

Assuming the claim holds, we can apply Corollary \ref{cor sun and bound} to obtain
$$
\mathbb{P}(\{ |\mathcal{T}_n(\omega)| \leq 8! ((8^2 - 8 +1) 12)^8 \text{ for all } n \in \mathbb{F}_q[t] \}) \geq 1 - o_M(1).
$$
Thus, we see that proving the above claim is sufficient to obtain our result.
We prove it for distinct possible types $I \subseteq \{ 1, 2, 3, 4, 5, 6, 7, 8 \}$
of vectorial sunflowers in $\mathcal{T}_n(\omega)$. We consider various cases in a similar manner as in \cite[Proposition 4.2]{C}.

We let Case 1 be when $I = \emptyset$.
If two of the entries of the equation $x_1 + x_2 + x_3 =n$ is chosen, then the third
is uniquely determined.
Thus there can not be a vectorial sunflower of type $I$, where  $| I \cap \{ 1, 2, 3 \} | = 2$.
Let Case 2 be when $| I \cap \{ 1, 2, 3 \} | = 1$.

Let us assume $| I \cap \{ 1, 2, 3 \} | = 0$ or $3$, for otherwise
it is taken care of in Case 2.
We split into further cases. Suppose $I$ contains at least one of the
pairs $\{1,4\}$, $\{5,6\}$ or $\{7,8\}$. Then we can deduce that
$|I \cap \{ 1,4, 5, 6, 7, 8 \}| = 2, 4$ or $6$, because of the equation
$x_1 + x_4 = x_5 + x_6 = x_7 + x_8$. For example, if $\{1,4\} \subseteq I$ and $5 \in I$,
then this forces $6 \in I$. Suppose $|I \cap \{ 1, 4, 5, 6, 7, 8 \}| = 6$. Since $1 \in I$,
we have $| I \cap \{ 1, 2, 3 \} | = 3$, and hence, $I = \{ 1, 2, 3, 4, 5, 6, 7, 8 \}$.
Thus, we consider the following cases,
Case 3 when $|I \cap \{ 1, 4, 5, 6 \}| = 2$
or  $|I \cap \{ 5, 6, 7, 8 \}| = 2$ or  $|I \cap \{ 1, 4, 7, 8 \}| = 2$,
and Case 4 when $I = \{ 1, 2, 3, 4, 5, 6, 7, 8 \}$.
We see that the possibilities considered in this paragraph are all contained in either Case 3 or Case 4.

Suppose $I$ does not contain any of the pairs $\{1,4\}$, $\{5,6\}$ or $\{7,8\}$.
In this case, we have $| I \cap \{ 1,4,5,6,7,8 \} | \in \{ 0, 1, 2, 3 \}$.
If $| I \cap \{ 1,4,5,6,7,8 \} | = 0$, then $1 \not \in I$ and hence $| I \cap \{ 1, 2, 3 \} | = 0$.
Therefore, $I = \emptyset$ and this is taken care of in Case 1.
We let Case 5 be when $| I \cap \{ 1,4,5,6,7,8 \} | = 1.$
If $| I \cap \{ 1,4,5,6,7,8 \} | = 2$ or $3$, then it is easy to see
that these possibilities are taken care of in Case 3.
Therefore, it is sufficient to only consider the above five cases of distinct types of $I$.

Case 1. $I = \emptyset$.
By Lemma \ref{exp Tn}, we know that $\mathbb{E}(|\mathcal{T}_n(\omega)| ) \ll q^{(-1/11)  \max \{ \deg n, M \} }$.
It then follows from Proposition \ref{K-dsv 1} that
$$
\mathbb{P}( \{ \mathcal{T}_n(\omega) \text{ contains a $12$-d.s.v. for some } n \in \mathbb{F}_q[t] \} ) = o_M(1).
$$
Therefore, our claim holds for vectorial sunflowers of this type.

Case 2. $|I \cap \{ 1, 2, 3 \}| = 1$. Without loss of generality, assume
$I \cap \{ 1, 2, 3 \} = \{1\}$. Let $l_1$ denote the common first coordinate.
If $\mathcal{T}_n(\omega)$ contains a vectorial sunflower of $12$ petals of type $I$ for some $n$, then there is a $12$-d.s.v.
$\{ \overline{x_j} \}_{1 \leq j \leq 12}$, where for each $j$ we have
$\overline{x_j} = (x_{2j},x_{3j})$, $Set(\overline{x_j}) \subseteq \omega$,  and $x_{2j} + x_{3j}= n- l_1$. Let $r = n - l_1$. Then
$\mathcal{U}_{r}(\omega)$ contains a $12$-d.s.v. and we obtain via Lemma \ref{family} our claim for
vectorial sunflowers of this type.

Case 3. $|I \cap \{ 1, 4, 5, 6 \}| = 2$ or  $| I \cap \{ 5, 6, 7, 8 \} | = 2$ or  $ | I \cap \{ 1, 4, 7, 8 \} | = 2$.
Suppose $|I \cap \{ 1, 4, 5, 6 \} | = 2$ as the other two cases are similar.
We consider the following two essentially distinct subcases separately.

$i$) Suppose $I \cap \{ 1, 4, 5, 6 \} = \{ 1, 4\}$.
Let $l_1$ and $l_4$ denote the common first and forth coordinates, respectively.
If $\mathcal{T}_n(\omega)$ contains a vectorial sunflower of $12$ petals of type $I$ for some $n$,
then there
is a $12$-d.s.v.
$\{ \overline{x}_j \}_{1 \leq j \leq 12}$, where for each $j$ we have
$\overline{x}_j = (x_{5j},x_{6j})$, $Set(\overline{x}_j) \subseteq \omega$,  and
$l_1 + l_4 = x_{5j} + x_{6j}$. Thus, for $r = l_1 + l_4$,
$\mathcal{U}_{r}(\omega)$ contains a $12$-d.s.v. and
we obtain via Lemma \ref{family}
our claim for vectorial sunflowers of this type. We can argue in a similar manner
if $I \cap \{ 1, 4, 5, 6 \} = \{ 5, 6\}$.

$ii$) Suppose $I \cap \{ 1, 4, 5, 6 \} = \{ 1, 5\}$.
Let $l_1$ and $l_5$ denote the common first and fifth coordinates, respectively.
If $\mathcal{T}_n(\omega)$ contains a vectorial sunflower of $12$ petals of type $I$ for some $n$, then
there is a $12$-d.s.v.
$\{ \overline{x}_j \}_{1 \leq j \leq 12}$, where for each $j$ we have
$\overline{x}_j = (x_{4j},x_{6j})$, $Set(\overline{x}_j) \subseteq \omega$,  and
$l_1 + x_{4j} = l_5 + x_{6j}$. Let $r = l_5 - l_1 = x_{4j} - x_{6j}$.
Note we have $r \not =
0$, because if $l_1 = l_5$, then the equation
$x_1 + x_4 = x_5 + x_6$ forces $\{x_1, x_4 \} = \{x_5, x_6 \}$, which is a contradiction. Thus,
$\mathcal{V}_{r}(\omega)$ contains a $12$-d.s.v. and we obtain via Lemma \ref{family}
our claim for vectorial sunflowers of this type. The remaining cases of $|I \cap \{ 1, 4, 5, 6 \}| = 2$
can be treated in a similar manner.

Case 4. $I = \{ 1, 2, 3, 4, 5, 6, 7, 8 \}$.
This is the trivial case. If two vectors have the same
$i$-th coordinate for all $i \in I$, then they are
the same vector. Thus, in particular $\mathcal{T}_n(\omega)$ does not
contain vectorial sunflowers of $12$ petals of this type.

Case 5. $| I \cap \{ 1,4,5,6,7,8 \} | = 1$. Without loss of generality suppose that
$ I \cap \{ 1,4,5,6,7,8 \}  = 1$.
Let $l_1$ denote the common first coordinate.
If $\mathcal{T}_n(\omega)$ contains a vectorial sunflower of $12$ petals of type $I$ for some $n$,
then there is a $12$-d.s.v.
$\{ \overline{x}_j \}_{1 \leq j \leq 12}$, where for each $j$ we have
$\overline{x}_j = (x_{4j},x_{5j}, x_{6j}, x_{7j}, x_{8j})$ , $Set(\overline{x}_j) \subseteq \omega$, and
$x_{5j} + x_{6j} = x_{7j} + x_{8j} = l_1 + x_{4j} $. Let $r = l_1$. Then
$\mathcal{W}_{r}(A)$ contains a $12$-d.s.v. and we obtain via Lemma \ref{family} our claim for
vectorial sunflowers of this type.
\end{proof}

We remark that in order for our argument to prove Propositions \ref{lower bound Q} and \ref{upper bound T_n} to work,
we needed the expectation of $\mathcal{Q}_n$ to go to infinity as $\deg n \rightarrow \infty$, while $\Delta(\mathcal{Q}_n)$
and the expectation of $\mathcal{T}_n$ to tend to $0$, and also the expectations of $\mathcal{U}_r$,
$\mathcal{V}_r$, and $\mathcal{W}_r$ to tend to $0$ as $\deg r \rightarrow \infty$. Our value of $\gamma = \frac{7}{11}$, similarly as in
\cite{C}, was chosen because it satisfies all of these conditions, and it also simplifies certain calculations. We note that it is
certainly not the only possible value for the method to prove Theorem \ref{main 1} to work.

\begin{proof}[Proof of Theorem \ref{main 1}]
By Proposition \ref{lower bound Q}, we have for $\omega \in \Omega$  with probability $1$  that
$$
|\mathcal{Q}_n(\omega)| \gg q^{\frac{1}{11} \deg n}
$$
for all $n \in \mathbb{F}_q[t] \backslash \{ 0 \}$ with $\deg n$ sufficiently large.
By Proposition \ref{upper bound T_n}, we know there exists $M$ sufficiently large
such that
$$
\mathbb{P}(\{ |\mathcal{T}_n(\omega)| \leq 10^{28}
\text{ for all } n \in \mathbb{F}_q[t] \}) \geq 1/2.
$$
Therefore, we deduce that there exists some $\omega_0 \in \Omega$ such that
$$
|\mathcal{Q}_n(\omega_0)| \gg q^{\frac{1}{11} \deg n }
$$
and
$$
|\mathcal{T}_n(\omega_0)| \leq 10^{28}
$$
for all $n \in \mathbb{F}_q[t] \backslash \{ 0 \}$ with $\deg n$ sufficiently large.
Thus we obtain our result by the argument given in the paragraph after ~(\ref{ineq 1}).
\end{proof}

\section{Proof of Theorem \ref{main 2}}
\label{sec proof of main 2}
Let $\varepsilon > 0$ be sufficiently small.
In this section, we consider the probability space \linebreak
$\mathcal{S}_M( \gamma ; S \ \text{mod } n_0 )$, where we let
$$
\gamma = \frac{2}{3} + \frac{\varepsilon}{9 + 9 \varepsilon},
$$
and let $S$ to be a non-empty subset of $\mathbb{G}_N$
satisfying the conditions of Corollary \ref{cor to thm}.
The basic strategy is as follows.
We use the Borell-Cantelli Lemma (Theorem \ref{Borel-C}) to show that in the probability space
$\mathcal{S}_M( \gamma ; S \ \text{mod } n_0 )$,
``most'' of the sequences, in other words with probability $1$, has ``many'' representations of $n$ as a sum of four of its elements,
where one of the four elements has degree less than or equal to $(\varepsilon \deg n)$,
for all $n \in \mathbb{F}_q[t]$ with $\deg n $ sufficiently large. We then show that out of these sequences, there exists
a sequence such that even after removing some of its elements to make it a Sidon sequence, it still has at least one
of the representations of $n$ left for each $n \in \mathbb{F}_q[t]$ with $\deg n $ sufficiently large.

For each $n \in \mathbb{F}_q[t]$, we consider the following collection of sets
$$
\mathcal{R}_n = \{ \theta = \{ x_1, x_2, x_3, x_4 \} : \theta \mbox{ satisfies Cond}(\mathcal{R}_n)  \},
$$
where
\begin{eqnarray}
\text{Cond} (\mathcal{R}_n)
=
\begin{cases}
x_1 + x_2 +  x_3 + x_4 = n, \\
\min \{ \deg x_1, \deg x_2, \deg x_3, \deg x_4 \} \leq \varepsilon \deg n,\\
x_i \not \equiv x_j \ (\text{mod }n_0) \mbox{ for } 1 \leq i < j \leq 4.
\end{cases}
\end{eqnarray}
Given a sequence of polynomials $\omega$, we let
$$
\mathcal{R}_n(\omega) = \{ \theta \in \mathcal{R}_n : \theta \subseteq \omega \}.
$$

We define
$$
\mathcal{B}_n = \{ \overline{x} = (x_1, x_2, x_3, x_4, x_5, x_6, x_7) : \overline{x} \text{ satisfies Cond} (\mathcal{B}_n)  \},
$$
where
\begin{eqnarray}
\text{Cond} (\mathcal{B}_n)
=
\begin{cases}
\{ x_1, x_2, x_3, x_4 \} \in \mathcal{R}_n, \\
x_1 + x_5 = x_6 + x_7, \ \ \  \{x_1, x_5\} \not =  \{x_6, x_7 \}, \\
x_1 \equiv x_6 \ (\text{mod }n_0), \ x_5 \equiv x_7 \ (\text{mod }n_0).
\end{cases}
\end{eqnarray}
We also let
$$
\mathcal{B}_n(\omega) = \{ \overline{x} \in B_n : \text{Set}(\overline{x}) \subseteq \omega \}.
$$

The \textit{Sidon lifting process} of a sequence $\omega$ consists of removing from
$\omega$, those elements $a \in \omega$ such that there exist $b,c,d \in \omega$
with $a + b = c + d$ and $\{a, b\} \not =  \{c , d \}$.
We denote by $\omega_{Sidon}$ the resulting Sidon sequence obtained by applying this process to $\omega$.

By a similar argument as in the paragraph before ~(\ref{ineq 1}), we see that
$|\mathcal{B}_n(\omega)|$ is an upper bound for the number of representations counted in $\mathcal{R}_n(\omega)$
that are destroyed in the Sidon lifting process of $\omega$. Thus, we obtain
\begin{equation}
\label{ineq 2}
|\mathcal{R}_n(\omega_{Sidon})| \geq |\mathcal{R}_n(\omega)| - |\mathcal{B}_n(\omega)|.
\end{equation}
Therefore, Theorem \ref{main 2} is established if we can prove that there
exists a sequence $\omega_0$ such that for any $n \in \mathbb{F}_q[t] \backslash \{ 0 \}$ with
$\deg n$ sufficiently large, we have $|\mathcal{R}_n(\omega_0)| \gg q^{\delta \deg n}$
for some $\delta > 0$, and $|\mathcal{B}_n(\omega_0)| \ll 1$.
We show that in some sense there are many sequences satisfying the former
condition, and then we show it is also the case for the latter condition. These tasks are accomplished in Propositions
\ref{lower bound R} and \ref{upper bound B_n}. We then prove that there exist sequences
satisfying both conditions.
Before we get into the proofs of these propositions,
we list three useful estimates. However, we postpone their proofs to Section \ref{calculations 2}.

\begin{lem}
\label{E R_n}
We have that
$$
\mathbb{E}(| \mathcal{R}_n(\omega)|) \gg q^{ \frac{2 \varepsilon^2}{9 + 9 \varepsilon} \deg n},
$$
for $n \in \mathbb{F}_q[t] \backslash \{ 0 \}$ with $\deg n$ sufficiently large.
\end{lem}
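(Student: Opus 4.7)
The plan is to lower-bound $\mathbb{E}(|\mathcal{R}_n(\omega)|) = \sum_{\theta \in \mathcal{R}_n}\prod_{x \in \theta} \mathbb{P}(\{x \in \omega\})$ by restricting the sum to a convenient subfamily of $\mathcal{R}_n$ and then optimizing the resulting weight. Writing $D = \deg n$, I will consider only quadruples $\{x_1, x_2, x_3, x_4\}$ in which $x_i \equiv S \pmod{n_0}$ and $\deg x_i = D$ for $i = 1, 2, 3$, while $\deg x_4 = d$ is a parameter ranging over $N \leq d \leq \lfloor \varepsilon D \rfloor$; I will optimize over $d$ at the end.

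The first step is to parametrize by residues modulo $n_0$. Writing $x_i = y_i n_0 + s_i$ with $s_i \in S$, the equation $\sum_{i=1}^4 x_i = n$ is equivalent to $\sum_{i=1}^4 s_i \equiv n \pmod{n_0}$ together with $\sum_{i=1}^4 y_i = \tilde n$, where $\tilde n := (n - \sum_i s_i)/n_0$ is a polynomial of degree $D - N$. I will invoke Corollary \ref{cor to thm} and the Lang--Weil count underlying its proof: for each residue class $\overline n \in \mathbb{G}_{4M_0}$, the variety of ordered $(s_1, s_2, s_3, s_4) \in S^4$ with distinct entries and $\sum s_i = \overline n$ has $\gg q^N$ $\mathbb{F}_q$-points, with the diagonal locus contributing only $O(q^{3N/4})$.

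The second step is to count the polynomial tuples $(y_1, y_2, y_3, y_4)$ for each fixed residue tuple. The linear equation $\sum_i y_i = \tilde n$, together with the degree prescriptions $\deg y_i = D - N$ for $i = 1, 2, 3$ and $\deg y_4 = d - N$, is a standard degrees-of-freedom problem over $\mathbb{F}_q$: the leading-coefficient equation $a_1 + a_2 + a_3 = a_0$ in $\mathbb{F}_q^*$ has $\Theta(q^2)$ solutions because $p > 3$, while after fixing $y_4$ the lower coefficients of $y_1, y_2$ can be chosen freely, contributing $q^{2(D - N)}$. Multiplying with the $\gg q^N$ admissible residue tuples yields $\gg q^{2D + d - 2N}$ ordered quadruples at degree $d$, up to bounded constants.

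Finally, each such quadruple contributes weight $q^{-\gamma(3D + d)}$, so the contribution at degree $d$ is $\gg q^{(1 - \gamma) d + (2 - 3\gamma) D - 2N}$. Since $1 - \gamma > 0$, the exponent is maximized at $d = \lfloor \varepsilon D \rfloor$, and substituting $\gamma = \tfrac{2}{3} + \tfrac{\varepsilon}{9(1 + \varepsilon)}$ produces the identity $(1 - \gamma)\varepsilon + (2 - 3\gamma) = \tfrac{2\varepsilon^2}{9(1 + \varepsilon)}$, giving $\mathbb{E}(|\mathcal{R}_n(\omega)|) \gg q^{\frac{2\varepsilon^2}{9 + 9\varepsilon} D}$ (the factor $q^{-2N}$ is absorbed into the implicit constant since $N$ is fixed). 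The hard part will be the degree bookkeeping in the second step: one must verify that the leading-coefficient equation has $\Theta(q^2)$ solutions (which needs $p > 3$) and that the Lang--Weil count for the residue variety survives imposing distinctness of the $s_i$, both of which follow from the analysis in Section \ref{second sec}.
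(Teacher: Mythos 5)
Your argument follows essentially the same route as the paper: restrict to quadruples where $x_1, x_2, x_3$ have degree $\deg n$ and $x_4$ has degree about $\varepsilon \deg n$, quotient out $n_0$ to reduce to counting $(y_1,\ldots,y_4)$ with $\sum y_i = l$, and do the leading-coefficient count, with your optimization over $d$ landing at $d = \lfloor \varepsilon \deg n\rfloor$, which is exactly the case the paper uses. One small caveat: Corollary \ref{cor to thm} and the Lang--Weil count in its proof give $q' + O(1) = q^{N/2} + O(1)$ admissible residue quadruples (the solutions are parametrized by a single variable over $\mathbb{F}_{q'}$ with $q' = q^{N/2}$), not $\gg q^{N}$ as you assert; this is harmless here since, as in the paper, a single admissible quadruple $(s_1,\ldots,s_4)$ already suffices and the $N$-dependent factors are absorbed into the implicit constant.
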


Recall from ~(\ref{def Delta}), the definition of $\Delta (\cdot)$.
\begin{prop}
\label{Delta R_n}
We have that
$$
\Delta( \mathcal{R}_n) \ll q^{\frac{- 3 \varepsilon + 2 \varepsilon^2}{9 + 9 \varepsilon} \deg n},
$$
for $n \in \mathbb{F}_q[t] \backslash \{ 0 \}$ with $\deg n$ sufficiently large.
\end{prop}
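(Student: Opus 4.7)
The plan is to bound $\Delta(\mathcal{R}_n)$ by decomposing the defining sum according to the overlap $k = |\theta \cap \theta'| \in \{1,2,3\}$ of pairs $\theta \sim \theta'$ in $\mathcal{R}_n \times \mathcal{R}_n$, and estimating each piece. First, I would rule out $k = 3$: if $\theta = \{x_1,x_2,x_3,x_4\}$ and $\theta' = \{x_1,x_2,x_3,y_4\}$, the sum constraint $x_1+x_2+x_3+x_4 = n = x_1+x_2+x_3+y_4$ forces $y_4 = x_4$, contradicting $\theta \ne \theta'$. So only $k \in \{1,2\}$ contribute.

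The central building block for the remaining cases is a bound of the form
$$
\sum_{\substack{y_1,\ldots,y_m \in \mathbb{F}_q[t] \\ y_1 + \cdots + y_m = s}} q^{-\gamma(\deg y_1 + \cdots + \deg y_m)} \ll q^{(m - 1 - m\gamma)\deg s},
$$
proved by splitting according to which $y_i$ carries the largest degree and summing geometric series (Section \ref{appendix} is presumably where such estimates live). Since $\gamma > 1/2$, both the $m=2$ and $m=3$ versions yield negative exponents in $\deg s$. For $k = 2$, parametrize $\theta = \{x_1,x_2,x_3,x_4\}$ and $\theta' = \{x_1,x_2,y_3,y_4\}$ with $x_3+x_4 = y_3+y_4 = n - x_1 - x_2$; the probability sum factors as $\sum_{x_1,x_2} q^{-\gamma(\deg x_1+\deg x_2)} \bigl(\sum_{x_3+x_4 = n-x_1-x_2} q^{-\gamma(\deg x_3+\deg x_4)}\bigr)^2$, and splitting by which coordinate realizes the min-degree constraint for each of $\theta$ and $\theta'$ gives, via the $m=2$ estimate, a bound comfortably smaller than the target.

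The decisive case is $k = 1$. Writing $x_1$ for the shared element, the sum factors as
$$
\sum_{x_1} q^{-\gamma \deg x_1} \left(\sum_{x_2+x_3+x_4 = n-x_1} q^{-\gamma(\deg x_2+\deg x_3+\deg x_4)}\right)^2.
$$
The dominant contribution arises when $\deg x_1 \leq \varepsilon \deg n$, because then the shared element $x_1$ realizes the min-degree constraint for both $\theta$ and $\theta'$ simultaneously. Applying the $m=3$ estimate to the inner sums and using $\sum_{\deg x_1 \leq \varepsilon \deg n} q^{(1-\gamma)\deg x_1} \ll q^{(1-\gamma)\varepsilon \deg n}$, the resulting exponent is $(1-\gamma)\varepsilon + 2(2-3\gamma)$, which on substituting $\gamma = \frac{2}{3} + \frac{\varepsilon}{9(1+\varepsilon)}$ simplifies exactly to $\frac{-3\varepsilon + 2\varepsilon^2}{9(1+\varepsilon)}$, matching the claim.

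The main obstacle is the complementary $k = 1$ regime with $\deg x_1 > \varepsilon \deg n$: here the min-degree constraint must be realized by a non-shared coordinate on each side, forcing an additional small variable in $\theta$ and in $\theta'$. Carrying out this bookkeeping gives exponent $(1-\gamma)(1+2\varepsilon) + 2(1-2\gamma) = \frac{-3 - 2\varepsilon + 4\varepsilon^2}{9(1+\varepsilon)}$, which is strictly more negative than the target for $\varepsilon$ small and is therefore absorbed. Combining the $k=2$ bound with both $k=1$ subcases establishes the proposition.
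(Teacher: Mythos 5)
Your proposal follows essentially the same route as the paper: you decompose $\Delta(\mathcal{R}_n)$ by the overlap size $|\theta\cap\theta'|\in\{1,2\}$ and by whether the shared element carries the min-degree constraint, apply iterated convolution estimates (the paper's Lemmas~\ref{basic 1} and~\ref{basic 3}), and correctly identify the dominant contribution as the $|\theta\cap\theta'|=1$, $\deg x_1\leq\varepsilon\deg n$ case with exponent $(4-6\gamma)+(1-\gamma)\varepsilon=\frac{-3\varepsilon+2\varepsilon^2}{9+9\varepsilon}$, matching the paper's $L_1$. One minor slip: the negativity of the $m=3$ exponent $2-3\gamma$ requires $\gamma>2/3$, not merely $\gamma>1/2$; this holds here since $\gamma=\frac{2}{3}+\frac{\varepsilon}{9+9\varepsilon}$, but the justification as written is insufficient.
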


\begin{lem}
\label{exp Bn}
We have that
$$
\mathbb{E}(| \mathcal{B}_n(\omega)|) \ll q^{ - \frac{ \varepsilon^2}{18} \max \{ \deg n, M \} }.
$$
\end{lem}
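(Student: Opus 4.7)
The plan is to expand the expectation as a sum of probability products over tuples in $\mathcal{B}_n$, reduce to a sum over free variables using the two linear constraints, and then estimate the resulting sum by a double estimate: an inner sum over the ``correction'' variables $(x_5,x_6,x_7)$ for fixed $x_1$, and an outer sum over $\mathcal{R}_n$. Explicitly, since the congruence and distinctness conditions defining $\mathcal{B}_n$ force the seven coordinates of a generic $\overline{x}\in\mathcal{B}_n$ to be pairwise distinct (exceptional coincidences contribute negligibly), we have
$$
\mathbb{E}(|\mathcal{B}_n(\omega)|)\;=\;\sum_{\overline{x}\in\mathcal{B}_n}\prod_{i=1}^{7}q^{-\gamma\deg x_i}.
$$
Using the two relations $x_4=n-x_1-x_2-x_3$ and $x_7=x_1+x_5-x_6$ to parametrise $\mathcal{B}_n$ by the free quintuple $(x_1,x_2,x_3,x_5,x_6)$, together with the congruence conditions mod $n_0$, reduces the problem to estimating a five-fold sum.

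Next I would bound the inner sum. For fixed $x_1$ I would estimate
$$
\Sigma(x_1)\;=\;\sum_{x_5,x_6}q^{-\gamma(\deg x_5+\deg x_6+\deg(x_1+x_5-x_6))},
$$
where $x_5\equiv s\pmod{n_0}$ for some $s\in S$ and $x_6\equiv x_1\pmod{n_0}$, with $\deg x_5,\deg x_6>M$. Splitting into cases according to whether $\deg x_5$ and $\deg x_6$ exceed $\max\{\deg x_1,M\}$, and using the estimates from Section~\ref{appendix} to control the possible cancellation in $\deg(x_1+x_5-x_6)$, one shows that in each case the contribution is bounded by $q^{-2N+(2-3\gamma)\max\{\deg x_1,M\}}$. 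Since $2-3\gamma=-\varepsilon/(3+3\varepsilon)<0$, this gives a genuine gain exponential in $\max\{\deg x_1,M\}$.

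Finally I would insert this back into the outer sum:
$$
\mathbb{E}(|\mathcal{B}_n(\omega)|)\;\ll\;q^{-2N}\sum_{(x_1,x_2,x_3,x_4)\in\mathcal{R}_n}q^{-\gamma\sum_{i=1}^4\deg x_i}\,q^{-\frac{\varepsilon}{3+3\varepsilon}\max\{\deg x_1,M\}}.
$$
I would then split the outer sum according to which of $x_1,x_2,x_3,x_4$ realises $\min_i\deg x_i\leq\varepsilon\deg n$. When the small variable is one of $x_2,x_3,x_4$, the factor $\max\{\deg x_1,M\}$ is of order $\deg n$, so the inner gain $q^{-\frac{\varepsilon}{3+3\varepsilon}\deg n}$ already overwhelms any polynomial-in-$\deg n$ blow-up of the outer sum, giving a bound far stronger than required. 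When $x_1$ itself is the small variable, $\max\{\deg x_1,M\}$ is only $\max\{\varepsilon\deg n,M\}$; here I combine the weaker inner gain with the upper-bound analogue of Lemma~\ref{E R_n} (namely $\ll q^{\frac{2\varepsilon^2}{9+9\varepsilon}\deg n}$ for the corresponding outer sum), and a short arithmetic check with $\gamma=\tfrac23+\tfrac{\varepsilon}{9+9\varepsilon}$ shows the net exponent is at most $-\frac{\varepsilon^2}{18}\max\{\deg n,M\}$.

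The main obstacle is the case analysis in the inner sum: one must control $\deg(x_1+x_5-x_6)$ in every regime of relative sizes of $\deg x_1,\deg x_5,\deg x_6$, including the degenerate regimes where leading terms cancel, and ensure that the congruence constraints $x_6\equiv x_1$, $x_5\equiv x_7\pmod{n_0}$ produce exactly the right counting factors. A secondary obstacle is the book-keeping in the outer sum: one needs the gain $q^{-\frac{\varepsilon}{3+3\varepsilon}\max\{\deg x_1,M\}}$ to dominate the outer Lemma~\ref{E R_n}-type factor in the delicate case where $x_1$ is the variable forced to have small degree, which is precisely where the specific choice of $\gamma$ pays off.
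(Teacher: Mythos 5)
The inner/outer decomposition you propose (first estimating the $(x_5,x_6,x_7)$-sum for fixed $x_1$ via Lemmas \ref{basic 1} and \ref{basic 3}, then carrying the resulting gain factor $q^{(2-3\gamma)\max\{\deg x_1,M\}}$ through an outer $\mathcal{R}_n$-sum) is a genuinely different organization from the paper's proof, which follows the template of Lemma \ref{exp Tn}: classify the tuples of $\mathcal{B}_n$ by their pattern of repeated coordinates, write out one sum $S_i$ per pattern, and bound each separately. Your way is tidier for the generic case, and the arithmetic does close: with $\gamma = \tfrac{2}{3}+\tfrac{\varepsilon}{9+9\varepsilon}$ one has $2-3\gamma=-\tfrac{\varepsilon}{3+3\varepsilon}$, and summing $q^{(2-4\gamma)\deg x_1}$ over $M<\deg x_1\le\varepsilon\deg n$ against the Lemma \ref{basic 1}-type bound $q^{(2-3\gamma)\deg n}$ for the remaining coordinates gives exponent $-\tfrac{\varepsilon^2}{9+9\varepsilon}\deg n\le -\tfrac{\varepsilon^2}{18}\deg n$.

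However, there is a genuine gap right at the start. You assert that the defining conditions of $\mathcal{B}_n$ ``force the seven coordinates of a generic $\overline{x}$ to be pairwise distinct'' and that ``exceptional coincidences contribute negligibly,'' and then write $\mathbb{E}(|\mathcal{B}_n(\omega)|)=\sum_{\overline{x}\in\mathcal{B}_n}\prod_{i=1}^{7}q^{-\gamma\deg x_i}$. This is the wrong direction. By definition $\mathbb{E}(|\mathcal{B}_n(\omega)|)=\sum_{\overline{x}\in\mathcal{B}_n}\prod_{y\in\mathrm{Set}(\overline{x})}q^{-\gamma\deg y}$, and if $\overline{x}$ has a repeated coordinate (the conditions permit e.g.\ $x_6=x_7$, $x_5=x_1$, or $x_5\in\{x_2,x_3,x_4\}$, which is precisely why the paper needs Cases 2--5 and sums $S_2,\ldots,S_5$), then the product over $\mathrm{Set}(\overline{x})$ has \emph{fewer} factors $q^{-\gamma\deg y}$ and is therefore \emph{larger} than $\prod_{i=1}^{7}q^{-\gamma\deg x_i}$. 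Your formula underestimates the expectation, so it cannot be used as the starting point for an upper bound. These coincidence classes are not negligible by size of probability; they only become controllable because each repetition carries an extra linear constraint (e.g.\ $x_1+x_5=2x_6$), which is what the paper's case analysis actually exploits. You need to enumerate the possible repetition patterns (using the congruences to rule most of them out, exactly as in the Lemma \ref{exp Tn} proof) and bound each resulting sum with its correct number of probability factors.

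Two smaller issues. First, your justification for the sub-case where one of $x_2,x_3,x_4$ is the small variable --- that ``$\max\{\deg x_1,M\}$ is of order $\deg n$'' --- is not true: when $x_2$ realizes the minimum, $x_1$ ranges freely over all degrees $>M$. The estimate still works, but via the weighted sum $\sum_{\deg x_1>M}q^{(2-4\gamma)\deg x_1}q^{(1-2\gamma)\max\{\deg(n-x_1-x_2),M\}}$ and another application of Lemma \ref{basic 3}, not by pretending $\deg x_1\asymp\deg n$. Similarly, in the $x_1$-small sub-case you cannot simply multiply a worst-case inner gain by the Lemma \ref{E R_n}-type upper bound $q^{\frac{2\varepsilon^2}{9+9\varepsilon}\deg n}$ for the outer sum: the gain factor $q^{(2-3\gamma)\deg x_1}$ depends on $x_1$ and must be carried inside the $x_1$-sum. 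Second, you never address the regime $\varepsilon\deg n\le M$. In that regime $\mathcal{B}_n(\omega)=\emptyset$ identically, because every $x_i\in\omega$ has $\deg x_i>M\ge\varepsilon\deg n$, contradicting the degree constraint in $\mathcal{R}_n$; the paper records this explicitly, and you need it to phrase the final bound in terms of $\max\{\deg n,M\}$ rather than $\deg n$. (The factor $q^{-2N}$ in your inner-sum estimate is harmless since $N$ is fixed, but it is also unjustified; the appendix lemmas simply drop the congruence restrictions and give the bound without it.)
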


We now prove the following proposition.
\begin{prop}
\label{lower bound R}
We have that
$$
\mathbb{P} ( \{ |\mathcal{R}_n(\omega)| \gg q^{\frac{2 \varepsilon^2}{9 + 9 \varepsilon} \deg n} \} ) = 1
$$
for $n \in \mathbb{F}_q[t] \backslash \{ 0 \}$ with $\deg n$ sufficiently large.
\end{prop}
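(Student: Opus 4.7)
The plan is to mirror closely the proof of Proposition \ref{lower bound Q}. I apply Janson's inequality (Theorem \ref{Janson}) to the family $\mathcal{F} = \mathcal{R}_n$ with $Y(\omega) = |\mathcal{R}_n(\omega)|$ on the probability space $\mathcal{S}_M(\gamma; S \text{ mod } n_0)$, where $\gamma = \frac{2}{3} + \frac{\varepsilon}{9 + 9\varepsilon}$ as fixed at the start of this section. From Lemma \ref{E R_n} I have $\mu_n := \mathbb{E}(|\mathcal{R}_n(\omega)|) \gg q^{\frac{2\varepsilon^2}{9 + 9\varepsilon} \deg n}$, and from Proposition \ref{Delta R_n} I have $\Delta(\mathcal{R}_n) \ll q^{\frac{-3\varepsilon + 2\varepsilon^2}{9 + 9\varepsilon} \deg n}$.

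The key arithmetic check is that the exponent in the bound on $\Delta(\mathcal{R}_n)$ is strictly smaller than the exponent in the lower bound on $\mu_n$, since the difference is $\frac{-3\varepsilon}{9 + 9\varepsilon} < 0$. Hence for all $n$ with $\deg n$ sufficiently large we have $\Delta(\mathcal{R}_n) < \mu_n$, so the strong conclusion of Janson's inequality gives
$$\mathbb{P}(\{ |\mathcal{R}_n(\omega)| \leq \mu_n / 2 \}) \leq \exp(-\mu_n / 16).$$

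Next I would verify summability over $n \in \mathbb{F}_q[t] \setminus \{0\}$: since the number of $n$ with $\deg n = j$ is $(q-1)q^j$ and $\mu_n \geq C q^{\frac{2\varepsilon^2}{9 + 9 \varepsilon} j}$ for some $C > 0$ depending on $q, \varepsilon$, one has
$$\sum_{n \in \mathbb{F}_q[t] \setminus \{0\}} \mathbb{P}(\{ |\mathcal{R}_n(\omega)| \leq \mu_n / 2 \}) \ll \sum_{j \geq T} q^{j+1} \exp\bigl(-C q^{\frac{2\varepsilon^2}{9 + 9 \varepsilon} j}\bigr) < \infty$$
for any threshold $T$ beyond which the Janson bound applies, since the double-exponential decay dominates the single-exponential growth. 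The Borel--Cantelli lemma (Theorem \ref{Borel-C}) then guarantees that, with probability $1$, only finitely many of the events $\{ |\mathcal{R}_n(\omega)| \leq \mu_n/2 \}$ occur, so that for all $n \in \mathbb{F}_q[t] \setminus \{0\}$ with $\deg n$ sufficiently large we have $|\mathcal{R}_n(\omega)| > \mu_n/2 \gg q^{\frac{2\varepsilon^2}{9 + 9\varepsilon} \deg n}$, which is the conclusion.

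There is no genuine obstacle here beyond invoking the packaged estimates: the hard work is hidden in Lemma \ref{E R_n} and Proposition \ref{Delta R_n}, whose proofs are deferred to Section \ref{calculations 2}. The only conceptual point worth flagging is that the specific value of $\gamma$ was chosen precisely so that the gap $\mu_n / \Delta(\mathcal{R}_n) \gg q^{\frac{3\varepsilon}{9 + 9\varepsilon} \deg n}$ tends to infinity, which is exactly what makes Janson's inequality give an exponentially small failure probability and hence makes the Borel--Cantelli sum converge.
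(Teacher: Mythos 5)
Your proposal is correct and follows essentially the same route as the paper: apply Janson's inequality to $\mathcal{F}=\mathcal{R}_n$, invoke Lemma \ref{E R_n} and Proposition \ref{Delta R_n} to verify $\Delta(\mathcal{R}_n) < \mu_n$ for $\deg n$ large, then sum the resulting double-exponentially small failure probabilities over $n$ by degree and finish with Borel--Cantelli. The explicit arithmetic check that the exponent gap is $\frac{-3\varepsilon}{9+9\varepsilon}<0$ is a small clarifying addition but the argument is the same.
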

\begin{proof}
We apply Theorem \ref{Janson} to $\mathcal{F} = \mathcal{R}_n$
and $Y = |\mathcal{R}_n(\omega)| = | \{ \theta \in \mathcal{R}_n : \theta \subseteq  \omega \}|$,
where $\omega$ is a random sequence in $\mathcal{S}_M( \gamma ; S \ \text{mod } n_0 )$.
We proved that $\mu = \mu_n = \mathbb{E}(\mathcal{R}_n(\omega)) \gg q^{\frac{ 2\varepsilon^2}{9 + 9 \varepsilon}}$ in Lemma \ref{E R_n},
and
$\Delta(\mathcal{R}_n) \ll q^{\frac{- 3 \varepsilon + 2\varepsilon^2}{9 + 9 \varepsilon}}$ in Proposition \ref{Delta R_n}.
Hence for $\deg n $ sufficiently large, we have $\Delta (R_n) < \mu_n$. Then, Theorem \ref{Janson} implies that
$$
\mathbb{P}( |\mathcal{R}_n(\omega)| \leq \mu_n/2) \leq \exp (- \mu_n/16).
$$
Therefore, we obtain that for some $C, C'>0$ and $T\in \mathbb{N}$, we can write
\begin{eqnarray}
\sum_{n \in \mathbb{F}_q[t]} \mathbb{P}(|R_n(\omega)| \leq \mu_n/2)
&<& C' + \sum_{j = T}^{\infty} (q^{j+1}-q^j) \exp(- C q^{\frac{ 2\varepsilon^2}{9 + 9 \varepsilon} j})
\notag
\\
&<& C' + (q - 1) \sum_{j = T}^{\infty} q^j \exp(- C q^{\frac{ 2\varepsilon^2}{9 + 9 \varepsilon} j})
\notag
\\
&<& \infty.
\notag
\end{eqnarray}
Thus, Theorem \ref{Borel-C} implies that with probability $1$, we have
$|\mathcal{R}_n(\omega)| > \mu_n/2 \gg q^{\frac{ 2\varepsilon^2}{9 + 9 \varepsilon} \deg n}$ for all $n \in \mathbb{F}_q[t] \backslash \{ 0 \}$ with $\deg n$
sufficiently large.
\end{proof}

For each $r \in \mathbb{F}_q[t]$, we define the following families of vectors.
\begin{eqnarray}
\label{family 2}
\mathcal{U}_{r} &=& \{ \overline{x} = (x_1, x_2) : x_1 + x_2 = r, x_1 \not = x_2 \},
\\
\mathcal{U}'_{r} &=& \{ \overline{x} = (x_1, x_2, x_3) : x_1 + x_2 + x_3 = r, x_i \not = x_j \ (i \not = j) \},
\notag
\\
\mathcal{V}_{r} &=& \{ \overline{x} = (x_1, x_2) : x_1 - x_2 = r, x_1 \not = x_2 \},
\notag
\\
\mathcal{V}'_{r} &=& \{ \overline{x} = (x_1, x_2, x_3) : x_1 + x_2 - x_3 = r, x_i \not = x_j \ (i \not = j) \}.
\notag
\end{eqnarray}

\begin{lem}
\label{family bound 2}
We have the following bounds on the expectations.
\newline
\newline
i) $\mathbb{E}(|\mathcal{U}_{r}(\omega)|) \ll q^{-\frac{1}{3} \max \{ \deg r, M \} }$.
\newline
ii) $\mathbb{E}(|\mathcal{U}'_{r}(\omega)|) \ll q^{-\frac{\varepsilon}{6} \max \{ \deg r, M \} }$.
\newline
iii) $\mathbb{E}(|\mathcal{V}_{r}(\omega)|) \ll q^{ - \frac{1}{3} \max \{ \deg r, M \} }$.
\newline
iv) $\mathbb{E}(|\mathcal{V}'_{r}(\omega)|) \ll q^{ - \frac{\varepsilon}{6} \max \{ \deg r, M \} }$.
\end{lem}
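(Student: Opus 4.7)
The plan is to follow the same template as the proof of Lemma \ref{family bound}, which is deferred to Section \ref{calculations 1} and will be mirrored here in Section \ref{calculations 2}. For each of the four families $\mathcal{F}_r \in \{\mathcal{U}_r, \mathcal{U}'_r, \mathcal{V}_r, \mathcal{V}'_r\}$, I would first unravel the definition of expectation and use the independence of the events $\{x \in \omega\}$ to write
$$
\mathbb{E}(|\mathcal{F}_r(\omega)|) \;=\; \sum_{\overline{x} \in \mathcal{F}_r} \prod_{y \in \mathrm{Set}(\overline{x})} \mathbb{P}(y \in \omega)
\;\leq\; \sum_{\overline{x} \in \mathcal{F}_r} \prod_{i} q^{-\gamma \deg x_i},
$$
where the sum is restricted to tuples with distinct coordinates, each of degree strictly greater than $M$, satisfying the linear constraint defining $\mathcal{F}_r$. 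Dropping the congruence condition $x_i \equiv S \pmod{n_0}$ only weakens the bound, which is acceptable.

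Second, I would reduce each weighted sum to one of the standard estimates derived in Section \ref{appendix}. For $\mathcal{U}_r$ and $\mathcal{V}_r$, the relations $x_1 \pm x_2 = r$ leave one free parameter, and comparing degrees gives $\max(\deg x_1, \deg x_2) \geq \deg r$, which combined with $\deg x_i > M$ produces a bound of the shape
$$
\sum q^{-\gamma(\deg x_1 + \deg x_2)} \;\ll\; q^{(1 - 2\gamma)L}, \qquad L := \max(\deg r, M).
$$
For $\mathcal{U}'_r$ and $\mathcal{V}'_r$, the relations $x_1 + x_2 \pm x_3 = r$ leave two free parameters, so, splitting according to whether the maximum degree among $x_1, x_2, x_3$ exceeds $\deg r$ (forcing the top two degrees to coincide because of cancellation) or equals $\deg r$, the Appendix bounds yield
$$
\sum q^{-\gamma(\deg x_1 + \deg x_2 + \deg x_3)} \;\ll\; q^{(2 - 3\gamma)L}.
$$

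Third, I would substitute the chosen value $\gamma = \tfrac{2}{3} + \tfrac{\varepsilon}{9+9\varepsilon}$. Direct computation gives $1 - 2\gamma = -\tfrac{1}{3} - \tfrac{2\varepsilon}{9+9\varepsilon} \leq -\tfrac{1}{3}$, proving parts (i) and (iii). Likewise $2 - 3\gamma = -\tfrac{\varepsilon}{3+3\varepsilon}$, and for $0 < \varepsilon < 1$ one has $\tfrac{\varepsilon}{3+3\varepsilon} > \tfrac{\varepsilon}{6}$, which proves parts (ii) and (iv).

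The main obstacle, as in Lemma \ref{family bound}, is the bookkeeping required to split each sum by degree profile $(\deg x_1, \deg x_2, \dots)$: whenever the largest degree exceeds $\deg r$, the leading term of the linear expression must vanish, forcing at least two of the $x_i$'s to have the same top degree, and one must track this against the constraint $\deg x_i > M$ when $M$ happens to be larger than $\deg r$. All of this is routine but notationally heavy, and is exactly the type of computation that Section \ref{calculations 2} is set up to handle in parallel with Section \ref{calculations 1}.
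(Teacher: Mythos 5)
Your proposal matches the paper's (fully deferred) proof: the paper simply refers to Lemma~\ref{family bound} and \cite[Lemma~6.6]{C}, and the underlying template is exactly what you describe — reduce $\mathbb{E}(|\mathcal{F}_r(\omega)|)$ to a weighted sum over the defining linear relation, apply the degree-balance estimates from Section~\ref{appendix} (Lemma~\ref{basic 1} once for $\mathcal{U}_r,\mathcal{V}_r$; Lemma~\ref{basic 1} followed by Lemma~\ref{basic 3} for $\mathcal{U}'_r,\mathcal{V}'_r$), then substitute $\gamma = \tfrac23 + \tfrac{\varepsilon}{9+9\varepsilon}$. Your exponent arithmetic is correct: $1-2\gamma \le -\tfrac13$ and $2-3\gamma = -\tfrac{\varepsilon}{3+3\varepsilon} < -\tfrac{\varepsilon}{6}$ for $0<\varepsilon<1$, and the hypothesis $\gamma>\tfrac23$ needed for Lemma~\ref{basic 3} (that is, $\phi+\kappa = 3\gamma-1 > 1$) is satisfied.
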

\begin{proof}
Since the details of the proof is similar to that of Lemma \ref{family bound} and \cite[Lemma 6.6]{C}, we omit the proof here.
\end{proof}

\begin{lem}
\label{family bound 2'}
Let $K \in \mathbb{N}$ and $K > 18 / \varepsilon^2$. Then for any of the four families
$\mathcal{F}_r$ in ~(\ref{family 2}), we have
$$
\mathbb{P}(\{ \mathcal{F}_r(\omega) \mbox{ contains a }K\mbox{-d.s.v. for some } r \in \mathbb{F}_q[t] \} ) = o_M(1).
$$
\end{lem}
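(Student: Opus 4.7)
The plan is to apply Proposition \ref{K-dsv 1} to each of the four families $\mathcal{U}_r, \mathcal{U}'_r, \mathcal{V}_r, \mathcal{V}'_r$ in turn, exactly as in the proof of Lemma \ref{family}. For this, I just need to check the hypothesis $K > 1/\delta$ of Proposition \ref{K-dsv 1}, where $\delta$ is the exponent coming from the expectation bound of Lemma \ref{family bound 2}.

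Inspecting the four bounds supplied by Lemma \ref{family bound 2}, the relevant exponents are $\delta = 1/3$ for $\mathcal{U}_r$ and $\mathcal{V}_r$, and $\delta = \varepsilon/6$ for $\mathcal{U}'_r$ and $\mathcal{V}'_r$. Since $\varepsilon > 0$ is sufficiently small (in particular $\varepsilon < 3$), the smaller of the two, $\delta = \varepsilon/6$, is the binding constraint, and the condition $K > 18/\varepsilon^2$ yields
$$
K > \frac{18}{\varepsilon^2} \geq \frac{6}{\varepsilon} = \frac{1}{\varepsilon/6} \geq \frac{1}{\delta}
$$
for each of the four families. Thus the hypothesis of Proposition \ref{K-dsv 1} is satisfied in each case, and applying it individually to $\mathcal{F}_r \in \{\mathcal{U}_r, \mathcal{U}'_r, \mathcal{V}_r, \mathcal{V}'_r\}$ gives
$$
\mathbb{P}(\{ \mathcal{F}_r(\omega) \text{ contains a } K\text{-d.s.v. for some } r \in \mathbb{F}_q[t] \}) = o_M(1)
$$
in every case, which is exactly the conclusion of the lemma.

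There is no substantial obstacle here; the statement is a direct corollary of Proposition \ref{K-dsv 1} together with Lemma \ref{family bound 2}. The apparent overkill in the hypothesis $K > 18/\varepsilon^2$ (versus the $K > 6/\varepsilon$ strictly needed by this lemma) is presumably chosen to line up with a later application in the analogue of Proposition \ref{upper bound T_n}, where the resulting sunflower bound $H!((H^2-H+1)K)^H$ must be dominated by other quantities depending on $\varepsilon$; this does not affect the proof of the lemma itself.
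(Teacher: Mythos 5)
Your proof is correct and follows exactly the paper's argument: deduce from Lemma \ref{family bound 2} that all four families satisfy $\mathbb{E}(|\mathcal{F}_r(\omega)|) \ll q^{-(\varepsilon/6)\max\{\deg r, M\}}$, then invoke Proposition \ref{K-dsv 1} with $\delta = \varepsilon/6$ after noting $K > 18/\varepsilon^2 \geq 6/\varepsilon = 1/\delta$ for small $\varepsilon$. Your closing remark about why the hypothesis uses the stronger threshold $18/\varepsilon^2$ is also on point: it is what Lemma \ref{exp Bn} and the Case~1 ($I=\emptyset$) branch of Proposition \ref{upper bound B_n} actually require.
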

\begin{proof}
For any of the fours choices of $\mathcal{F}_r$,
Lemma \ref{family bound 2} shows that $\mathbb{E}(|\mathcal{F}_r(\omega)|) \ll q^{- \frac{\varepsilon}{6} \max \{ \deg r, M \} }$.
Thus, the result follows by Proposition \ref{K-dsv 1}.
\end{proof}

We prove the following proposition, which is one of the main ingredients to prove Theorem \ref{main 2}.
\begin{prop}
\label{upper bound B_n}
Let $K \in \mathbb{N}$ and $K > 18 / \varepsilon^2$. We have that
$$
\mathbb{P}( \{ |\mathcal{B}_n(\omega)| \leq 7!((7^2 - 7+1) K )^7 \text{ for all } n \in \mathbb{F}_q[t] \} ) \geq 1 - o_M(1).
$$
\end{prop}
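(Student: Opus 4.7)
The plan is to mirror the argument used for Proposition \ref{upper bound T_n}. I first establish the claim that, with probability $1 - o_M(1)$, the random family $\mathcal{B}_n(\omega)$ contains no vectorial sunflower of $K$ petals for any $n \in \mathbb{F}_q[t]$. Granting this claim, since vectors in $\mathcal{B}_n$ have $H = 7$ coordinates, Corollary \ref{cor sun and bound} immediately yields the stated uniform bound $|\mathcal{B}_n(\omega)| \leq 7!((7^2-7+1)K)^7$ on the same event.

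To prove the claim, I would carry out a case analysis on the core $I \subseteq \{1, \ldots, 7\}$ of a putative sunflower. Substituting the common coordinates $\ell_i$ ($i \in I$) into the two defining linear relations of $\mathcal{B}_n$, namely
$$x_1 + x_2 + x_3 + x_4 = n \quad \text{and} \quad x_1 + x_5 = x_6 + x_7,$$
one of two things happens. Either (i) some coordinate outside $I$ is uniquely determined by these relations, hence constant across all $K$ petals, contradicting the disjointness required of a $K$-d.s.v. when $K \geq 2$; or (ii) after projecting onto a suitable subset of the non-core positions, the projected vectors form a $K$-d.s.v. inside one of the four families $\mathcal{U}_r, \mathcal{U}'_r, \mathcal{V}_r, \mathcal{V}'_r$ defined in ~(\ref{family 2}), for some $r$ depending linearly on the fixed $\ell_i$'s. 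For instance, types with $I \supseteq \{1,2,3,4\}$ reduce to $\mathcal{V}'_{\ell_1}$ via $x_6 + x_7 - x_5 = \ell_1$ (or to $\mathcal{U}_r$ or $\mathcal{V}_r$ when more of $\{5,6,7\}$ also lies in $I$); types with $|I \cap \{1,2,3,4\}| \in \{1,2\}$ reduce to $\mathcal{U}'_r$ or $\mathcal{U}_r$ by projecting onto the remaining positions of $\{1,2,3,4\}$ via Eq1; and nontrivial $I \subseteq \{5,6,7\}$ reduces to $\mathcal{V}'_{\ell_5}$ (or to $\mathcal{U}_r$, $\mathcal{V}_r$) via Eq2. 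Representative case-(i) types are $I = \{1,2,3\}$ (forcing $x_4$) and $I = \{5,6,7\}$ (forcing $x_1$). The base case $I = \emptyset$ is handled directly by combining Lemma \ref{exp Bn} with Proposition \ref{K-dsv 1}, using $\delta = \varepsilon^2/18$ and the hypothesis $K > 18/\varepsilon^2$. Case (ii) is closed by Lemma \ref{family bound 2'}, and summing the $o_M(1)$ probabilities over the finitely many possible types $I$ proves the claim.

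The main obstacle is bookkeeping rather than any deep new idea: I must verify that every $I$ falls into (i) or (ii), and in (ii) confirm that the projected sub-vectors genuinely form a $K$-d.s.v. in the named family. The latter uses that Sets of sub-coordinates inherit disjointness from the full Sets, together with the distinctness conditions built into $\mathcal{B}_n$; in particular, the condition $\{x_1, x_5\} \neq \{x_6, x_7\}$ is needed to rule out the degenerate $r = 0$ subcases. For example, when $I \supseteq \{5,6\}$ and $\ell_5 = \ell_6$, this condition prevents $x_1 = x_7$ and so forces $r = \ell_6 - \ell_5$ to be nonzero (or, equivalently, rules out the existence of the sunflower altogether), which is exactly what is required for the projected $K$-d.s.v. to live in $\mathcal{V}_r$.
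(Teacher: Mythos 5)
Your proposal matches the paper's proof in both structure and substance: you establish the same claim (no vectorial sunflowers of $K$ petals with probability $1-o_M(1)$), invoke Corollary \ref{cor sun and bound} with $H=7$ to convert it into the stated numerical bound, handle $I=\emptyset$ via Lemma \ref{exp Bn} together with Proposition \ref{K-dsv 1} using $\delta = \varepsilon^2/18$ and $K > 18/\varepsilon^2$, and close the remaining cases by projecting onto the non-core coordinates of one of the two defining relations, landing in $\mathcal{U}_r, \mathcal{U}'_r, \mathcal{V}_r$, or $\mathcal{V}'_r$ and appealing to Lemma \ref{family bound 2'}. The paper organizes the nonempty cores into the six cases $|I\cap\{1,2,3,4\}|\in\{1,2\}$, $|I\cap\{1,5,6,7\}|\in\{1,2\}$, and $I=\{1,\dots,7\}$, first ruling out $|I\cap\{1,2,3,4\}|=3$ and $|I\cap\{1,5,6,7\}|=3$; your ``forced coordinate'' observation is the same reduction phrased as a direct contradiction with the $K$-d.s.v. disjointness rather than as a restriction on admissible $I$, but the content is identical, and your use of $\{x_1,x_5\}\neq\{x_6,x_7\}$ to exclude the degenerate $r=0$ subcases matches the paper's handling as well.
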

\begin{proof}
We claim the following statement:

\textbf{Claim.} Let $K$ be a positive integer such that $K > 18/ \varepsilon^2$.
Then with probability $1 - o_M(1)$, $\mathcal{B}_n(\omega)$ does not contain vectorial sunflower of $K$ petals for any $n \in \mathbb{F}_q[t]$.

Assuming the claim holds, we can apply Corollary \ref{cor sun and bound} to obtain
$$
\mathbb{P}( \{ |\mathcal{B}_n(\omega)| \leq 7!((7^2 - 7+1) K )^7  \text{ for all } n \in \mathbb{F}_q[t] \} ) \geq 1 - o_M(1).
$$
Thus, we see that proving the above claim is sufficient to obtain our result.
We prove it for distinct possible types $I \subseteq \{ 1, 2, 3, 4, 5, 6, 7 \}$
of vectorial sunflowers in $\mathcal{B}_n(\omega)$. We consider various cases in a similar manner as in Proposition \ref{upper bound T_n}
or \cite[Proposition 5.2]{C}.

By the definition of $\mathcal{B}_n$, it is easy to see that there can not
be a vectorial sunflower of type $I$, where
$| I \cap \{ 1, 2, 3, 4 \} | = 3$ or $| I \cap \{ 1, 5, 6, 7 \} | = 3$.
Suppose $| I \cap \{ 1, 2, 3, 4 \} | = 4$, then we have $| I \cap \{ 1, 5, 6, 7 \} | \geq 1$.
Similarly, if $| I \cap \{ 1, 5, 6, 7 \} | = 4$, then we have $| I \cap \{ 1, 2, 3, 4 \} | \geq 1$.
Therefore, the following six cases cover every possibility:
Case 1. $I = \emptyset$, Case 2. $|I \cap \{ 1, 2, 3, 4 \}| = 1$,
Case 3. $|I \cap \{ 1, 2, 3, 4 \}| = 2$, Case 4. $|I \cap \{ 1, 5, 6, 7 \}| = 1$,
Case 5. $|I \cap \{ 1, 5, 6, 7 \}| = 2$, and Case 6. $I = \{ 1, 2, 3, 4, 5, 6, 7 \}$.

The argument to show that the above claim holds for each of the six cases is similar to the argument
employed in Proposition \ref{upper bound T_n} and \cite[Proposition 5.2]{C}. Thus, we omit verifying the remaining details.
\end{proof}

We remark that in order for our argument to prove Propositions \ref{lower bound R} and \ref{upper bound B_n} to work,
we needed the expectation of $\mathcal{R}_n$ to go to infinity as $\deg n \rightarrow \infty$, while $\Delta(\mathcal{R}_n)$
and the expectation of $\mathcal{B}_n$ to tend to $0$, and also the expectations of $\mathcal{U}_r$,
$\mathcal{U}'_r$, $\mathcal{V}_r$, and $\mathcal{V}'_r$ to tend to $0$ as $\deg r \rightarrow \infty$.
Our value of $\gamma = \frac{2}{3} + \frac{\varepsilon}{9 + 9 \varepsilon}$, similarly as in \cite{C}, was chosen because it satisfies all of these conditions,
and it also simplifies certain calculations.

\begin{proof}[Proof of Theorem \ref{main 2}]
By Proposition \ref{lower bound R}, we have for $\omega \in \Omega$ with probability $1$ that
$$
|\mathcal{R}_n(\omega)| \gg q^{\frac{2 \varepsilon^2}{9 + 9 \varepsilon} \deg n}
$$
for all $n \in \mathbb{F}_q[t] \backslash \{ 0 \}$ with $\deg n$ sufficiently large.
By Proposition \ref{upper bound B_n}, we know there exists $M$ sufficiently large
such that
$$
\mathbb{P}(\{ |\mathcal{B}_n(\omega)| \leq 7!((7^2 - 7+1) K )^7
\text{ for all } n \in \mathbb{F}_q[t] \}) \geq 1/2.
$$
Therefore, we deduce that there exists some $\omega_0 \in \Omega$ such that
$$
|\mathcal{R}_n(\omega_0)| \gg q^{\frac{2 \varepsilon^2}{9 + 9 \varepsilon} \deg n}
$$
and
$$
|\mathcal{B}_n(\omega_0)| \leq 7!((7^2 - 7+1) K )^7
$$
for all $n \in \mathbb{F}_q[t] \backslash \{ 0 \}$ with $\deg n$ sufficiently large.
Thus we obtain our result by the argument given in the paragraph after ~(\ref{ineq 2}).
\end{proof}

\section{Technical Lemmas}
\label{appendix}
In this section, we calculate bounds that were used to compute estimates essential in the proof of our main results.
We list the estimates used in Theorems \ref{main 1} and \ref{main 2} in Sections \ref{calculations 1}
and \ref{calculations 2}, respectively. We consider $q$ to be a fixed
number, and throughout these sections implicit constants in inequalities may depend on $q$ without any further
notice. Let $\mathcal{A}, \mathcal{B}$ be two disjoint subsets of $\mathbb{F}_q[t]$, and $\nu_x \in \mathbb{R} \ ( x \in \mathbb{F}_q[t])$.
In order to avoid clutter in the exposition, we use the following notation for summation
$$
\sum_{x \in \mathcal{A}} + \sum_{x \in \mathcal{B}} \nu_x := \sum_{x \in \mathcal{A}} \nu_x + \sum_{x \in \mathcal{B}} \nu_x.
$$
We also use the notation in a similar manner when we have more than two pairwise disjoint sets.
Recall from Section \ref{prelim} that we have set $\mathbb{P}(0 \in \omega) = 0$.
Thus, we use the convention throughout the remainder of the paper that for any $\mu \in \mathbb{R}$,
we let $q^{\mu \deg 0} = 0$. We also let $\deg 0 := - \infty$. Finally, we note that in this section we do not require any assumption on the characteristic of $\mathbb{F}_q$. Thus, in particular, the results of this section hold even when the characteristic of $\mathbb{F}_q$ is $2$ or $3$.

For $\alpha, \beta \in \mathbb{R}$ and $n \in \mathbb{F}_q[t]$, we define the following quantities,
$$
\sigma_{\alpha, \beta}(n) = \sum_{ \stackrel{x,y \in \mathbb{F}_q[t] \backslash \{ 0 \} }{ x + y = n } } q^{ - \alpha \deg x} q^{ - \beta \deg y}
= \sum_{x \in \mathbb{F}_q[t] \backslash \{ 0 \} } q^{ - \alpha \deg x} q^{ - \beta \deg (n-x)},
$$
and
$$
\sigma_{\alpha, \beta}(n; M) = \sum_{ \deg x> M } q^{ - \alpha \deg x} q^{ - \beta \deg (n-x)}.
$$
For each $r \in \mathbb{N}$, there are exactly $q^{r+1} - q^r$ polynomials in $\mathbb{F}_q[t]$ of degree $r$.
Thus given any $\gamma > 1$ and $R \in \mathbb{N}$, we have
\begin{equation}
\label{sum bound 1}
\sum_{\deg x > R} q^{- \gamma \deg x} = \sum_{r = R+1}^{\infty} \left( q^{r+1} - q^r \right) q^{- \gamma r}
= (q - 1) \sum_{r = R+1}^{\infty} q^{(1 - \gamma) r}
\ll_{\gamma} q^{- (\gamma - 1) R} .
\end{equation}
Similarly for any $\lambda \in \mathbb{R}$, $\lambda > -1$, and $R \in \mathbb{N}$, we have
\begin{equation}
\label{sum bound 2}
\sum_{\deg x < R} q^{\lambda \deg x} = \sum_{r = 0}^{ R - 1} \left( q^{r+1} - q^r \right) q^{\lambda r}
= (q - 1) \sum_{r = 0}^{R-1} q^{(\lambda + 1) r}
= (q-1) \frac{q^{(\lambda + 1)R} - 1}{q^{(\lambda + 1)} - 1} \ll_{\lambda} q^{(\lambda + 1) R}.
\end{equation}

We have the following useful lemmas.
\begin{lem}
\label{basic 1}
Suppose $\alpha, \beta \in \mathbb{R}$ satisfy
$0 < \alpha < 1, 0 < \beta < 1$, and $\alpha + \beta > 1$. Then, for any $n \in \mathbb{F}_q[t] \backslash \{ 0 \}$, we have the following estimates:

i) $\sigma_{\alpha, \beta}(n; M) \ll q^{- (\alpha + \beta -1) \max \{ \deg n, M \} }$.

ii) $\sigma_{\alpha, \beta}(n) \ll q^{- (\alpha + \beta -1) \deg n}$.

Here the implicit constants depend only on $\alpha$, $\beta$ and $q$.
If $n=0$, then we still have i), but we have
$\sigma_{\alpha, \beta}(n) \ll 1$ for ii).
\end{lem}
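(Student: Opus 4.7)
The plan is to prove both bounds by splitting the sum defining $\sigma_{\alpha,\beta}(n)$ (or $\sigma_{\alpha,\beta}(n;M)$) according to the relative sizes of $\deg x$, $\deg n$, and $M$, and then applying the geometric-series estimates ~(\ref{sum bound 1}) and ~(\ref{sum bound 2}) summand-by-summand. The key observation is that the ultrametric property of polynomial degree makes $\deg(n-x)$ easy to control in all but one regime.

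First I would handle statement ii) with $n \neq 0$. I would decompose
\[
\sigma_{\alpha,\beta}(n) = \sum_{\deg x < \deg n} + \sum_{\deg x = \deg n} + \sum_{\deg x > \deg n} q^{-\alpha \deg x} q^{-\beta \deg(n-x)}.
\]
In the first sum, $\deg(n-x) = \deg n$ identically, so I pull out $q^{-\beta \deg n}$ and apply ~(\ref{sum bound 2}) with $\lambda = -\alpha > -1$ to get a bound $O(q^{(1-\alpha)\deg n})$, yielding $O(q^{-(\alpha+\beta-1)\deg n})$. In the third sum, $\deg(n-x) = \deg x$ identically, so it collapses to $\sum_{\deg x > \deg n} q^{-(\alpha+\beta)\deg x}$, and ~(\ref{sum bound 1}) with $\gamma = \alpha+\beta > 1$ gives the desired bound. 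The remaining diagonal sum $\deg x = \deg n$ is the one subtle piece: I would perform the change of variable $y = n - x$ (with the convention $q^{-\beta \deg 0} = 0$ disposing of the $x = n$ term) and split into $\deg y < \deg n$ and $\deg y = \deg n$, each contributing $O(q^{(1-\beta)\deg n})$ by a direct count of polynomials of each degree, so that the overall $\deg x = \deg n$ contribution is $O(q^{-(\alpha+\beta-1)\deg n})$ after multiplication by $q^{-\alpha \deg n}$.

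Next I would turn to i). If $\deg n \geq M$, then trivially $\sigma_{\alpha,\beta}(n;M) \leq \sigma_{\alpha,\beta}(n)$ and the bound follows from ii). If $\deg n < M$, then every $x$ in the summation range automatically satisfies $\deg x > \deg n$, so $\deg(n-x) = \deg x$ and the sum reduces to $\sum_{\deg x > M} q^{-(\alpha+\beta)\deg x}$, which by ~(\ref{sum bound 1}) is $O(q^{-(\alpha+\beta-1)M})$. Combining the two ranges gives the uniform bound $O(q^{-(\alpha+\beta-1)\max\{\deg n, M\}})$. The case $n = 0$ is immediate: $\deg(n-x) = \deg x$ makes $\sigma_{\alpha,\beta}(0)$ a single geometric series $\sum_{x \neq 0} q^{-(\alpha+\beta)\deg x} \ll 1$, while $\sigma_{\alpha,\beta}(0;M)$ is handled by ~(\ref{sum bound 1}) exactly as above.

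The only real obstacle is the diagonal stratum $\deg x = \deg n$, where the identity $\deg(n-x) = \deg x$ fails and one must genuinely count polynomials $y = n-x$ of each possible degree $\leq \deg n$; this is why the hypotheses $\alpha < 1$ and $\beta < 1$ are needed (so that ~(\ref{sum bound 2}) applies with $\lambda = -\alpha$ and $\lambda = -\beta$, respectively). Everything else is a mechanical application of the two geometric bounds, and I do not expect any further complications.
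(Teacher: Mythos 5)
Your argument is correct and follows essentially the same approach as the paper: a three-way split of the sum according to whether $\deg x$ is less than, equal to, or greater than $\deg n$, with the ultrametric identity disposing of the off-diagonal strata and a parametrization of the diagonal stratum (yours via the substitution $y = n - x$, the paper's via writing $x = at^{N_0} + y$ and separating the cancelling leading coefficient) reducing to the geometric bounds ~(\ref{sum bound 1}) and ~(\ref{sum bound 2}). The only cosmetic difference is the order of logic: the paper proves i) directly and obtains ii) by setting $M = -1$, while you prove ii) first and then derive i) by comparing $\sigma_{\alpha,\beta}(n;M) \leq \sigma_{\alpha,\beta}(n)$ when $\deg n \geq M$ and handling $\deg n < M$ separately; both routes are equivalent in substance.
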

\begin{proof}
Since the estimate is trivial when $n=0$, we assume $n \not = 0$.
We only prove $i)$ as $ii)$ follows immediately from $i)$
by letting $M = -1$.
Suppose $\deg n \leq M$.
Since $\alpha + \beta > 1$, it
follows by ~(\ref{sum bound 1}) that
$$
\sigma_{\alpha, \beta}(n; M) = \sum_{\deg x > M} q^{ - \alpha \deg x } q^{- \beta \deg (n-x)}
= \sum_{\deg x > M} q^{- (\alpha + \beta) \deg x }
\ll q^{-(\alpha + \beta -1) M}. 
$$

Suppose $N_0 = \deg n > M$.
Since $ - \alpha > -1$ and $\alpha + \beta > 1$, it follows by ~(\ref{sum bound 1})
and ~(\ref{sum bound 2}) that
\begin{eqnarray}
\label{sig ineq 1}
&&
\\
\sigma_{\alpha, \beta}(n; M) &=&
 \sum_{M < \deg x  < N_0} 
+
\sum_{\deg x = N_0} 
+
\sum_{\deg x > N_0}   q^{ - \alpha \deg x} q^{- \beta \deg (n-x)}
\notag
\\
\notag
&\leq&
q^{- \beta N_0} \sum_{M < \deg x  < N_0} q^{ - \alpha \deg x}
+
\sum_{\deg x = N_0} q^{- \alpha \deg x} q^{- \beta \deg (n-x)}
+
\sum_{\deg x > N_0} q^{ - (\alpha + \beta ) \deg x}
\\
&\ll&
q^{- \beta N_0} q^{(1 - \alpha) N_0}
+
q^{- \alpha N_0}  \sum_{\deg x = N_0} q^{- \beta \deg (n+x)}
+
q^{-(\alpha + \beta -1) N_0}.
\notag
\end{eqnarray}
We now deal with the remaining sum on the right hand side of the inequality displayed above.
Given a degree $N_0$ polynomial $x$, we let
$x = c t^{N_0} + y$, where $c \in \mathbb{F}_q \backslash \{ 0\} $ and $y \in \mathbb{G}_{N_0}$.
If the leading coefficient of $n$ is $c' \in \mathbb{F}_q \backslash \{ 0\}$, then
we see that $\{ n + (- c') t^{N_0} +  y : y \in \mathbb{G}_{N_0} \} = \mathbb{G}_{N_0}$.
Thus we obtain the following bound by ~(\ref{sum bound 2}),
\begin{eqnarray}
\sum_{\deg x = N_0} q^{- \beta \deg (n+x)}
&=&
\sum_{ a \in \mathbb{F}_q \backslash \{ 0, -c'\} } \  \sum_{\deg y < N_0} q^{- \beta \deg (n + at^{N_0} + y )}
+
\sum_{\deg y < N_0} q^{- \beta \deg (n + (-c') t^{N_0} + y )}
\notag
\\
&=&
(q-2) \sum_{\deg y < N_0} q^{- \beta \deg n } + \sum_{\deg z < N_0} q^{- \beta \deg z}
\notag
\\
&\ll&
(q-2) q^{N_0 - \beta N_0 } + q^{(1 - \beta) N_0}.
\notag
\end{eqnarray}
Therefore, it follows from ~(\ref{sig ineq 1}) that when $\deg n > M$, we have
$$
\sigma_{\alpha, \beta}(n; M) \ll q^{- (\alpha + \beta - 1) \deg n}.
$$
\end{proof}

\begin{lem}
\label{basic 3}
Suppose $\phi,\kappa \in \mathbb{R}$ satisfy
$0 < \phi < 1, 0< \kappa < 1$, and $\phi + \kappa > 1$. Let $r \in \mathbb{F}_q[t]$. Then we have that
$$
\sum_{ \deg x > M } q^{- \phi \deg x}
q^{- \kappa \max \{ \deg (r + x), M \} }
\ll
q^{(1  - \phi - \kappa ) \max \{ \deg r, M \}  }.
$$
\end{lem}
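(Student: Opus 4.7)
The plan is to split the sum according to whether $\max\{\deg(r+x),M\}$ is realized by $\deg(r+x)$ or by $M$: write
\[
\sum_{\deg x>M} q^{-\phi\deg x}\,q^{-\kappa\max\{\deg(r+x),M\}} = T_1+T_2,
\]
where $T_1$ collects the contribution from $x$ with $\deg(r+x)>M$ and $T_2$ the contribution from $x$ with $\deg(r+x)\le M$ (using the convention $\deg 0=-\infty$). Each piece will be bounded separately.

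For $T_1$, the observation $\deg(r+x)=\deg(-r-x)$ lets me identify $T_1$ as a partial sum of $\sigma_{\phi,\kappa}(-r;M)$ from the definition preceding Lemma \ref{basic 1}. Since $\phi,\kappa\in(0,1)$ and $\phi+\kappa>1$, Lemma \ref{basic 1}(i) applied with $n=-r$ gives directly
\[
T_1 \le \sigma_{\phi,\kappa}(-r;M) \ll q^{-(\phi+\kappa-1)\max\{\deg r,M\}} = q^{(1-\phi-\kappa)\max\{\deg r,M\}},
\]
which is already the target bound.

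For $T_2$, the key observation is that $\deg x>M$ together with $\deg(r+x)\le M$ forces the leading term of $x$ to cancel against that of $-r$, so $\deg x=\deg r$ and in particular $\deg r>M$. Hence $T_2=0$ whenever $\deg r\le M$, in which case $T_1$ already suffices. When $\deg r>M$, the substitution $z=r+x$ puts the range of summation in bijection with a subset of $\{z\in\mathbb{F}_q[t]:\deg z\le M\}$, a set of cardinality $\le q^{M+1}$; each term contributes $q^{-\phi\deg r}\,q^{-\kappa M}$, giving
\[
T_2 \ll q^{M(1-\kappa)-\phi\deg r}.
\]
Since $1-\kappa>0$ and $M<\deg r$, one has $q^{M(1-\kappa)}\le q^{(1-\kappa)\deg r}$, and therefore $T_2\ll q^{(1-\phi-\kappa)\deg r}=q^{(1-\phi-\kappa)\max\{\deg r,M\}}$, matching the required bound.

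There is no serious obstacle here: the whole argument is a routine reduction to Lemma \ref{basic 1}, and the only mildly delicate point is the bookkeeping for $T_2$, namely the fact that the constraint $\deg(r+x)\le M<\deg x$ pins down $\deg x=\deg r$ and makes $T_2$ degenerate unless $\deg r>M$.
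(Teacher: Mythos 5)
Your proof is correct, and it takes a genuinely different (and cleaner) route than the paper's. The paper argues by cases on $\deg r$ versus $M$, and in the case $\deg r > M$ it splits the sum according to $\deg x < \deg r$, $\deg x = \deg r$, $\deg x > \deg r$, then deals with the middle range by expanding $x = ct^{N_0} + y$ and separating $c = -\mathrm{lead}[r]$ from $c \neq -\mathrm{lead}[r]$ --- in effect re-running the computation that proves Lemma \ref{basic 1}. You instead split by whether the $\max$ is achieved by $\deg(r+x)$ or by $M$: the part $T_1$ with $\deg(r+x) > M$ is literally dominated by $\sigma_{\phi,\kappa}(-r;M)$ (since $\deg(r+x) = \deg(-r-x)$ and all summands are nonnegative), so Lemma \ref{basic 1}(i) gives the target bound in one line; and the part $T_2$ with $\deg(r+x) \le M$ is degenerate --- your observation that $\deg x > M \ge \deg(r+x)$ forces $\deg x = \deg r > M$ is exactly right, the index set is in bijection with $\{z : \deg z \le M\}$ of size $q^{M+1}$, and the resulting bound $q^{(1-\kappa)M - \phi\deg r} \le q^{(1-\phi-\kappa)\deg r}$ follows from $1 - \kappa > 0$ and $M < \deg r$. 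What your approach buys is a genuine reduction to the already-proved Lemma \ref{basic 1}, avoiding the leading-coefficient case analysis; what the paper's approach buys is self-containment (it does not depend on Lemma \ref{basic 1}'s internal structure). Both are correct; yours is more economical.
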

\begin{proof}
We consider the two cases, $\deg r \leq M$ and $\deg r > M$, separately.
Suppose $\deg r \leq M$. Then we have
$ \deg (r + x) = \deg x > M$.  Since
$\phi + \kappa > 1$, we obtain the following bound by ~(\ref{sum bound 1}),
\begin{equation}
\label{first bound for Wr}
\sum_{ \deg x > M } q^{- \phi \deg x}
q^{- \kappa \max \{ \deg (r + x), M \} }  \ll \sum_{ \deg x > M } q^{ ( - \phi - \kappa ) \deg x} \ll q^{(1  - \phi - \kappa )M}.
\end{equation}

Next, suppose $\deg r > M$. We split and simplify the sum as follows,
\begin{eqnarray}
&& \sum_{ \deg x > M } q^{- \phi \deg x} q^{ - \kappa \max \{ \deg (r + x), M \} }
\notag
\\
&\leq& \sum_{\deg x < \deg r} + \sum_{ \deg x = \deg r} + \sum_{ \deg x > \deg r } q^{- \phi \deg x} q^{- \kappa \max \{ \deg (r + x), M \} }
\notag
\\
&=&
q^{- \kappa \deg r } \sum_{\deg x < \deg r} q^{- \phi \deg x}
+ \sum_{\deg x = \deg r}  q^{- \phi \deg x} q^{- \kappa \max \{ \deg (r + x), M \} }
\notag
\\
&& \phantom{000011111222333333444444444445555555555666666666} +\sum_{\deg x > \deg r}  q^{(- \kappa - \phi) \deg  x }.
\notag
\end{eqnarray}
Note $ - \phi > -1$ and $ \kappa + \phi > 1$. Thus by applying ~(\ref{sum bound 2}) and
~(\ref{sum bound 1}) to the first sum and the third sum, respectively, we see that these sums are
bounded by
\begin{eqnarray}
\label{W sum 1}
\ll q^{(1 - \kappa + \phi) \deg r}.
\end{eqnarray}

We now deal with the remaining second sum. Let $N_0 = \deg x = \deg r $.
Write a degree $N_0$ polynomial $x$ as
$x = c t^{N_0} + y$, where $c \in \mathbb{F}_q \backslash \{ 0\} $ and $y \in \mathbb{G}_{N_0}$.
Given $z \in \mathbb{F}_q[t]$ we define $lead[z] \in \mathbb{F}_q \backslash \{ 0\}$ to be
the coefficient of $t^{\deg z}$ in $z$. By separating the cases $c \not = - lead[r]$ and $c = - lead[r]$, we obtain the following bound for the sum in question,
\begin{eqnarray}
\label{W sum 2'}
&&
\sum_{ \deg x = \deg r = N_0 } q^{- \phi \deg x}
q^{- \kappa \max \{ \deg (r + x), M \} }
\\
&=&
q^{- \phi N_0} \sum_{ c \in \mathbb{F}_q[t] \backslash \{ 0\} } \sum_{\deg y < N_0} q^{ - \kappa \max \{  \deg (r + ct^{N_0} + y ) , M \}}
\notag
\\
&=&
(q-2) q^{- \phi N_0} \sum_{\deg y < N_0} q^{ - \kappa N_0 } + q^{- \phi N_0} \sum_{\deg z < N_0} q^{ - \kappa \max \{ \deg z , M \}}
\notag
\\
&=&
(q-2) q^{- \phi N_0} q^{(1  - \kappa) N_0 }
+ q^{- \phi N_0} \sum_{ \deg z \leq M } q^{- \kappa M}
+ q^{- \phi N_0} \sum_{ M < \deg z < N_0}  q^{- \kappa \deg z}.
\notag
\end{eqnarray}
Since $- \kappa > -1$, we can apply ~(\ref{sum bound 2}) to the third sum,
and obtain the following bound for ~(\ref{W sum 2'}),
\begin{eqnarray}
\label{W sum 2}
\ll q^{(1 - \phi - \kappa ) N_0} + q^{- \phi N_0} q^{(1 - \kappa) M} \ll q^{(1 - \phi - \kappa ) N_0}.
\end{eqnarray}
The last inequality follows, because $1 - \kappa > 0$ and $N_0 > M.$
Therefore, 
we obtain that
$$
\sum_{ \deg x > M } q^{- \phi \deg x} q^{- \kappa \max \{ \deg (r + x), M \} }  \ll q^{(1 - \phi - \kappa) \deg r}
$$
when $\deg r > M$.
\end{proof}

We also have the following lemma, which we make use of only in Section \ref{calculations 1}.
\begin{lem}
\label{basic 2}
Given any polynomials $a, b \in \mathbb{F}_q[t] \backslash \{ 0 \}$ and $1/2 < \gamma < 2/3$, we have
$$
\sum_{x \in \mathbb{F}_q[t] \backslash \{ 0 \} } q^{- \gamma \deg x} q^{- \gamma \deg (x+a) } q^{ (1 - 2 \gamma) \deg (x+b) }
\ll
q^{( 1 - 2 \gamma) (\deg a + \deg b) }.
$$
If $a=0$ and $b \in \mathbb{F}_q[t] \backslash \{ 0 \}$, then we have
$$
\sum_{x \in \mathbb{F}_q[t]  \backslash \{ 0 \}  } q^{- \gamma \deg x} q^{- \gamma \deg (x+a) } q^{ (1 - 2 \gamma) \deg (x+b) }
\ll
q^{( 1 - 2 \gamma) \deg b }.
$$
\end{lem}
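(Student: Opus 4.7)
My plan is to bound the sum by a case analysis on $X = \deg x$ relative to $A = \deg a$ and $B = \deg b$. I will partition the summation range into five regions: $X < \min\{A,B\}$, $X = \min\{A,B\}$, $\min\{A,B\} < X < \max\{A,B\}$, $X = \max\{A,B\}$, and $X > \max\{A,B\}$ (the middle region being empty when $A = B$). In each open region, the non-archimedean property $\deg(u+v) = \max\{\deg u, \deg v\}$ when the degrees differ determines both $\deg(x+a)$ and $\deg(x+b)$ exactly in terms of $X$, $A$, and $B$, turning the inner sum into a geometric sum over integers $X$ that I can estimate using \eqref{sum bound 1} and \eqref{sum bound 2}, multiplied by the count $\ll q^X$ of polynomials of a given degree.

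The two boundary regions $X = A$ and $X = B$ require more care, since there one of $\deg(x+a)$ or $\deg(x+b)$ may drop below $X$. To handle them I will write $x = c t^X + y$ with $c \in \mathbb{F}_q \setminus \{0\}$ and $y \in \mathbb{G}_X$, and split on whether $c$ equals the negative of the relevant leading coefficient. For the single bad value of $c$, the variable $y$ varies freely over $\mathbb{G}_X$ and the remaining sum collapses to $\sum_{\deg z < X} q^{\mu \deg z}$ with $\mu \in \{-\gamma,\, 1-2\gamma\}$; both satisfy $\mu > -1$ (using $\gamma < 1$, which follows from $\gamma < 2/3$), so \eqref{sum bound 2} yields the expected bound. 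This is the same leading-coefficient trick already used in the proofs of Lemmas \ref{basic 1} and \ref{basic 3}.

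Once each regional contribution is collected, verifying that it is $\ll q^{(1-2\gamma)(A+B)}$ reduces to a handful of elementary linear inequalities in $A$ and $B$ that need to be checked separately in the cases $A \leq B$ and $A \geq B$. The most delicate step will be in the middle region $\min\{A,B\} < X < \max\{A,B\}$, where the contribution essentially involves $\sum_X q^{(2-3\gamma)X}$; the hypothesis $\gamma < 2/3$ is precisely what makes $2 - 3\gamma > 0$, so this sum is dominated by its top end and the resulting exponent compares correctly with $(1-2\gamma)(A+B)$. The other hypothesis $\gamma > 1/2$ is used twice: it makes $1 - 2\gamma < 0$, so that $q^{(1-2\gamma)\cdot}$ is decreasing in degree and the middle-region sum can be routed through $\min\{A,B\}$, and it makes $2 - 4\gamma < 0$, so that the outermost region $X > \max\{A,B\}$ gives a convergent tail by \eqref{sum bound 1}, bounded by $q^{(2-4\gamma)\max\{A,B\}}$ and hence by $q^{(1-2\gamma)(A+B)}$ via $2\max\{A,B\} \geq A+B$.

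Finally, for the case $a = 0$, the region $X < A$ disappears since $\deg 0 = -\infty$ and the convention $q^{\mu \deg 0} = 0$ removes any boundary contribution at $X = A$. The remaining analysis simplifies: with $\deg(x+a)$ replaced by $\deg x$ in every region, the same three-way split of $X$ versus $B$ yields the sharper bound $\ll q^{(1-2\gamma)\deg b}$, with the factor coming from $\deg a$ now absent, because there is no degree of $a$ left to contribute an extra $q^{(1-2\gamma)A}$.
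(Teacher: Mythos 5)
Your decomposition is essentially the paper's (split on $\deg x$ relative to $\deg a, \deg b$; use the leading-coefficient trick at the boundary degrees), just refined into five regions instead of three, and the arithmetic you sketch for the open regions and the boundary regions with $\deg a \ne \deg b$ all works out. However, there is a genuine gap when $\deg a = \deg b$.

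When $\deg a = \deg b = D$, the two boundary regions coincide, and the leading-coefficient trick as you describe it — ``for the single bad value of $c$, the remaining sum collapses to $\sum_{\deg z < X} q^{\mu \deg z}$ with $\mu \in \{-\gamma,\,1-2\gamma\}$'' — fails precisely in the sub-case $lead[a]=lead[b]$. For the bad value $c = -lead[a]=-lead[b]$, \emph{both} $\deg(x+a)$ and $\deg(x+b)$ drop simultaneously, and after the change of variable $z = y + (a - lead[a]t^D)$ you are left with the genuinely two-factor sum
\[
q^{-\gamma D} \sum_{\deg z < D} q^{-\gamma \deg z}\, q^{(1-2\gamma)\deg(z+g)}, \qquad g = b - a \neq 0,\ \deg g < D,
\]
which does not collapse to a single geometric sum. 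Moreover, the naive bound you would get by discarding one factor (e.g.\ $q^{(1-2\gamma)\deg(z+g)} \le 1$, then $\sum_{\deg z < D} q^{-\gamma \deg z} \ll q^{(1-\gamma)D}$) yields $q^{(1-2\gamma)D}$, which since $1-2\gamma < 0$ is \emph{larger} than the target $q^{(1-2\gamma)\cdot 2D}$ by a factor $q^{(2\gamma - 1)D}$ that grows with $D$; so this sub-case is not a routine cleanup but is where the lemma is actually tight. The paper handles it by the observation that $\deg g \le \deg z$ implies $\deg(z+g) \le \deg z$, hence $q^{-\gamma \deg z} \le q^{-\gamma \deg(z+g)}$, which trades the remaining exponent onto $z+g$ and reduces the sum (after a change of variable) to $\sum_{\deg w < D} q^{(1-3\gamma)\deg w} \ll q^{(2-3\gamma)D}$, using $\gamma < 2/3$; combined with the prefactor $q^{-\gamma D}$ this gives the correct $q^{(2-4\gamma)D}$. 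Your proposal needs an analogous argument in the coincident-boundary, matching-leading-coefficient case to close this gap.
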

\begin{proof}
We consider the following three cases separately: $\deg a < \deg b, \deg a > \deg b$ and $\deg a = \deg b$.
We only present the details of the computation for the cases $\deg a < \deg b$ and $\deg a = \deg b$. The result
for the case $\deg a > \deg b$ can be verified in a similar manner as the case $\deg a < \deg b$.
First, suppose $0 \leq \deg a < \deg b$. We split the sum and simplify it in the following manner,
\begin{eqnarray}
\label{first sum in lem 5}
&&\sum_{x \in \mathbb{F}_q[t]  \backslash \{ 0 \} } q^{- \gamma \deg x} q^{- \gamma \deg (x+a) } q^{ (1 - 2 \gamma) \deg (x+b) }
\\
&=&
\sum_{ \deg x < \deg b}
+ \sum_{ \deg x = \deg b}
+ \sum_{ \deg x > \deg b}
q^{- \gamma \deg x} q^{- \gamma \deg (x+a) } q^{ (1 - 2 \gamma) \deg (x+b) }
\notag
\\
&=&
q^{ (1 - 2 \gamma) \deg b} \sum_{ \deg x < \deg b} q^{- \gamma \deg x - \gamma \deg (x+a) }
+
q^{- 2 \gamma \deg b} \sum_{ \deg x = \deg b}  q^{ (1 - 2 \gamma) \deg (x+b) }
\notag
\\
&&\phantom{12345678901234567890111111111112222222222223333333333}+
 \sum_{ \deg x > \deg b} q^{( 1 - 4 \gamma ) \deg x} 
\notag
\\
\notag
\\
&\leq&
q^{ (1 - 2 \gamma) \deg b} \sum_{x \in \mathbb{F}_q[t]  \backslash \{ 0 \} } q^{- \gamma \deg x - \gamma \deg (x+a) }
+
q^{- 2 \gamma \deg b} \sum_{ \deg z < 1 + \deg b }  q^{ (1 - 2 \gamma) \deg z }
\notag
\\
&&\phantom{12345678901234567890111111111112222222222223333333333}+
 \sum_{ \deg x > \deg b} q^{( 1 - 4 \gamma ) \deg x}. 
\notag
\end{eqnarray}
Note we have $2 \gamma > 1$, $1 - 2 \gamma > -1$ and $ 4 \gamma - 1 > 1$.
Thus, we apply Lemma \ref{basic 1} to the first sum, ~(\ref{sum bound 2}) to the second sum, and
~(\ref{sum bound 1}) to the third sum, in order to estimate the final expression above.
Consequently, we obtain that the above expression is bounded by
\begin{eqnarray}
&\ll&
q^{ (1 - 2 \gamma) \deg b} q^{ (1 - 2 \gamma) \deg a }
+
q^{- 2 \gamma \deg b} q^{ (2 - 2 \gamma) \deg b }
+
q^{ (2 - 4 \gamma) \deg b }
\notag
\\
&\ll&
q^{(1 - 2 \gamma) \deg b} q^{ (1 - 2 \gamma) \deg a }.
\notag
\end{eqnarray}
Our conclusion when $a=0$ follows from a similar analysis as above, except that we have to use the following bound
obtained via ~(\ref{sum bound 1}) instead of Lemma \ref{basic 1} to bound ~(\ref{first sum in lem 5}),
$$
\sum_{x \in \mathbb{F}_q[t]  \backslash \{ 0 \} } q^{- \gamma \deg x - \gamma \deg (x+a) }
\ll
\sum_{x \in \mathbb{F}_q[t]  \backslash \{ 0 \} } q^{- 2 \gamma \deg x} \ll 1.
$$

Next, suppose $\deg a = \deg b$.
We split the sum and simplify it in the following manner,
\begin{eqnarray}
\label{eqn deg a = deg b}
&&\sum_{x \in \mathbb{F}_q[t] } q^{- \gamma \deg x} q^{- \gamma \deg (x+a) } q^{ (1 - 2 \gamma) \deg (x+b) }
\\
&=&
 \sum_{ \deg x < \deg b}
+
\sum_{ \deg x = \deg b}
+
\sum_{ \deg x > \deg b}  q^{- \gamma \deg x} q^{- \gamma \deg (x+a) } q^{ (1 - 2 \gamma) \deg (x+b) }
\notag
\\
&=&
q^{ (1 - 3 \gamma) \deg b} \sum_{ \deg x < \deg b} q^{- \gamma \deg x }
+
q^{- \gamma \deg b} \sum_{ \deg x = \deg b} q^{- \gamma \deg (x+a) } q^{ (1 - 2 \gamma) \deg (x+b) }
\notag
\\
&&\phantom{12345678901234567890111111111112222222222223333333333}+
\sum_{ \deg x > \deg b} q^{( 1 - 4 \gamma ) \deg x}.
\notag
\end{eqnarray}
Note we have $- \gamma > -1$ and $4 \gamma -1 > 1$. Thus, we apply
~(\ref{sum bound 2}) to the first sum, and
~(\ref{sum bound 1}) to the third sum, and obtain that
the first and the third terms of the final expression above are bounded by
$$
\ll q^{(2 - 4 \gamma ) \deg b} = q^{(1 - 2 \gamma ) (\deg a + \deg b) }.
$$
The remaining second sum requires more work to estimate.
It is clear that the set of polynomials of degree $(\deg b)$ can be expressed as
$$
\{ c t^{\deg b} + y : c \in \mathbb{F}_q \backslash \{ 0\}, y \in \mathbb{G}_{\deg b}\}.
$$
Let $c \in \mathbb{F}_q \backslash \{ 0 \}$, and let
$lead[a]$ be the leading coefficient of $a$. Then, it follows that
\begin{eqnarray}
\label{a + c t}
(a + c t^{\deg b}) + \mathbb{G}_{\deg b} =
\begin{cases}
\mathbb{G}_{\deg b}, &\mbox{if } c = - lead[a], \\
(lead[a] + c) t^{\deg b} + \mathbb{G}_{\deg b}, & \mbox{otherwise. }
\end{cases}
\end{eqnarray}
We also have a similar statement if we replace $a$ by $b$.
We consider the second sum in the final expression of ~(\ref{eqn deg a = deg b}) in two separate cases,
$lead[a] \not = lead[b]$ and $lead[a] = lead[b]$.

Suppose $lead[a] \not = lead[b]$.
We utilize ~(\ref{a + c t}) to simplify the sum in question in the following manner,
\begin{eqnarray}
&& \sum_{ \deg x = \deg b} q^{- \gamma \deg (x + a) } q^{ (1 - 2 \gamma) \deg (x+b) }
\notag
\\
&=&
\sum_{c \in \mathbb{F}_q \backslash \{ -lead[a], -lead[b], 0 \}} \
\sum_{ \deg y < \deg b } q^{- \gamma \deg (a + c t^{\deg b} + y)}
q^{ (1 - 2 \gamma) \deg (b + c t^{\deg b} + y) }
\notag
\\
&& \phantom{1111111111222222}
+
\sum_{ \deg y < \deg b } q^{- \gamma \deg (a -lead[a] t^{\deg b} + y)}
q^{ (1 - 2 \gamma) \deg (b -lead[a] t^{\deg b} + y) }
\notag
\\
&& \phantom{1111111111222222222333333}+
\sum_{ \deg y < \deg b } q^{- \gamma \deg (a -lead[b] t^{\deg b} + y)}
q^{ (1 - 2 \gamma) \deg (b -lead[b] t^{\deg b} + y) }
\notag
\\
&=&
(q-3) q^{\deg b} q^{(1 - 3 \gamma) \deg b}
+
q^{ (1 - 2 \gamma) \deg b } \sum_{ \deg y < \deg b } q^{- \gamma \deg (a -lead[a] t^{\deg b} + y)}
\notag
\\
&& \phantom{111222233334445555555566666777778888}+
q^{ -  \gamma \deg b } \sum_{ \deg y < \deg b } q^{(1 -2 \gamma) \deg (b -lead[b] t^{\deg b} + y)}
\notag
\\
&\ll&
q^{(2 - 3 \gamma) \deg b}
+
q^{ (1 - 2 \gamma) \deg b } \sum_{ \deg z < \deg b } q^{- \gamma \deg z}
+ q^{ -  \gamma \deg b } \sum_{ \deg z < \deg b } q^{( 1- 2\gamma) \deg z}.
\notag
\end{eqnarray}
Since $- \gamma > -1$ and $1 - 2 \gamma > -1$, we obtain by ~(\ref{sum bound 2})
that the final expression above is bounded by
$$
\ll q^{(2 - 3 \gamma) \deg b}.
$$

On the other hand, suppose $lead[a] = lead[b]$. We simplify the sum in a similar manner as in the previous case,
\begin{eqnarray}
\label{la=lb}
&& \sum_{ \deg x = \deg b} q^{- \gamma \deg (x + a) } q^{ (1 - 2 \gamma) \deg (x+b) }
\\
&=&
\sum_{c \in \mathbb{F}_q \backslash \{ -lead[b], 0 \}} \
\sum_{ \deg y < \deg b } q^{- \gamma \deg (a + c t^{\deg b} + y)}
q^{ (1 - 2 \gamma) \deg (b + c t^{\deg b} + y) }
\notag
\\
&& \phantom{11111111112222222223333}
+
\sum_{ \deg y < \deg b } q^{- \gamma \deg (a -lead[b] t^{\deg b} + y)}
q^{ (1 - 2 \gamma) \deg (b -lead[b] t^{\deg b} + y) }
\notag
\\
&=&
(q-2) q^{\deg b} q^{(1 - 3 \gamma) \deg b}
+
\sum_{ \deg y < \deg b } q^{- \gamma \deg (a - lead[a] t^{\deg b} + y)}
q^{ (1 - 2 \gamma) \deg (b -lead[b] t^{\deg b} + y) }.
\notag
\end{eqnarray}
Let $g$ be the polynomial,
$$
g = b - lead[b] t^{\deg b} - ( a - lead[a] t^{\deg b}),
$$
where $\deg g < \deg b$.
By the change of variable $x = y + (a - lead[a] t^{\deg b})$,
we see that the final expression in ~(\ref{la=lb}) equals to
\begin{eqnarray}
\label{la = lb 2}
(q-2) q^{ (2 - 3 \gamma) \deg b}
+
\sum_{ \deg x < \deg b } q^{- \gamma \deg x }
q^{ (1 - 2 \gamma) \deg (x + g) }.
\end{eqnarray}

We have that if $\deg g \leq \deg x$, then $\deg (x+g) \leq \deg x$, and equivalently, $- \gamma \deg (x+g) \geq - \gamma \deg x$.
Since $1- 2 \gamma < 0$, we bound the sum in ~(\ref{la = lb 2}) as follows,
\begin{eqnarray}
&&\sum_{ \deg x < \deg b } q^{- \gamma \deg x } q^{ (1 - 2 \gamma) \deg (x + g) }
\notag
\\
&=&
\sum_{\deg x < \deg g} + \sum_{\deg g \leq \deg x < \deg b} q^{- \gamma \deg x } q^{ (1 - 2 \gamma) \deg (x + g) }
\notag
\\
&=&
\sum_{\deg x < \deg g} q^{- \gamma \deg x } q^{ (1 - 2 \gamma) \deg g }
+ \sum_{\deg g \leq \deg x < \deg b} q^{- \gamma \deg x } q^{ (1 - 2 \gamma) \deg (x + g) }
\notag
\\
&\ll&
\sum_{\deg x < \deg g} q^{- \gamma \deg x } q^{ (1 - 2 \gamma) \deg x }
+ \sum_{\deg g \leq \deg x < \deg b} q^{- \gamma \deg (x+g) } q^{ (1 - 2 \gamma) \deg (x + g) }
\notag
\\
&\ll&
\sum_{\deg x < \deg g} q^{ (1 - 3 \gamma) \deg x }
+ \sum_{\deg z < \deg b}  q^{ (1 - 3 \gamma) \deg z }.
\notag
\end{eqnarray}
Since $\gamma < 2/3$, we have $1 - 3 \gamma > -1$. Thus, by ~(\ref{sum bound 2}) we see that
the final expression above is bounded by
$$
\ll q^{(2 - 3 \gamma) \deg g} + q^{(2 - 3 \gamma) \deg b} \ll q^{(2 - 3 \gamma) \deg b}.
$$
Consequently, ~(\ref{la=lb}) is bounded by
$$
\ll q^{(2 - 3 \gamma) \deg b}.
$$
Therefore, in either case we obtain that ~(\ref{eqn deg a = deg b}) is bounded by
$$
\ll q^{(2 - 4 \gamma) \deg b} = q^{(1 - 2 \gamma) (\deg a + \deg b)}
$$
as desired.
\end{proof}

\section{Estimates in Theorem \ref{main 1} }
\label{calculations 1}

Recall in Section \ref{sec proof of main 1} we work in the probability space $\mathcal{S}_M( \gamma ; S \ \text{mod } n_0 )$,
where $\gamma = 7/ 11$ and $S$ is a non-empty subset of $\mathbb{G}_N$ satisfying the conditions of Theorem \ref{thm 2.1 in C}.

\begin{replemma}{E Q_n}
We have that
$$
\mathbb{E}(| \mathcal{Q}_n(\omega) |) \gg q^{\frac{1}{11} \deg n},
$$
for $n \in \mathbb{F}_q[t] \backslash \{ 0 \}$ with $\deg n$ sufficiently large.
\end{replemma}
\begin{proof}
By the definition of $\mathcal{Q}_n(\omega)$, we have
$$
\mathbb{E}(| \mathcal{Q}_n(\omega) |) = \sum_{ \{ x_1,  x_2,  x_3 \} \in Q_n } \mathbb{P}(x_1, x_2, x_3 \in \omega) \geq q^{ - 3 \gamma \deg n} |\mathcal{Q}'_n|,
$$
where
$$
\mathcal{Q}'_n = \{ \{ x_1, x_2, x_3 \} \in \mathcal{Q}_n : x_i \equiv S \ (\text{mod } n_0 ), \deg n = \deg x_i > M \}.
$$
By our choice of $S$, we know there exist distinct $s_1, s_2, s_3$ such that
$n \equiv s_1 + s_2 + s_3 \ (\text{mod } n_0)$. We fix such  $s_1, s_2, s_3$,
and write $x_i = s_i + n_0 y_i$. Let $l \in \mathbb{F}_q[t]$ be the polynomial such that $n - s_1 - s_2 - s_3 = l \ n_0$.
Then we have $|\mathcal{Q}'_n| \geq |\mathcal{Q}^*_n|$, where
$$
\mathcal{Q}^*_n = \{ \{ y_1, y_2, y_3 \} : y_1 + y_2 + y_3 = l, \deg y_i = \deg n - \deg n_0  \}.
$$
Given any element $a \in \mathbb{F}_q$, the number of solutions
$(a_1, a_2, a_3) \in \mathbb{F}_q \times \mathbb{F}_q \times \mathbb{F}_q$ such that $a_1 + a_2 + a_3 = a$
and $a_i \not = 0$ is greater than or equal to $(q-1)^2 - q$. We see this by allowing
$a_1$ and $a_2$ to be any elements in $\mathbb{F}_q\backslash \{ 0\}$,
which then uniquely determines $a_3$. This gives $(q-1)^2$ choices, but we
do not want to include cases when $a_3 = 0$, which only occurs
when $a_1 + a_2 = a$. There are only $q$ combinations of $a_1$ and $a_2$
such that $a_1 + a_2 = a$. Therefore, by considering the addition coordinatewise,
we obtain the following crude bound,
$$
|\mathcal{Q}^*_n| \geq \frac16 \left( (q-1)^2 - q \right)^{\deg y_i}  \geq \frac16 (q(q-3))^{\deg n - \deg n_0} \gg q^{2 \deg n}.
$$
Note the factor of $1/6$ is there to take care of possible over counting of the triplets.
Hence, we obtain our result,
$$
\mathbb{E}(|\mathcal{Q}_n(\omega)|) \geq q^{ - 3 \gamma \deg n} | \mathcal{Q}'_n| \gg q^{ (2 - 3 \gamma) \deg n} = q^{\frac{1}{11} \deg n}.
$$
\end{proof}

\begin{repproposition}{Delta Q_n}
We have that
$$
\Delta( \mathcal{Q}_n) \ll q^{-\frac{2}{11} \deg n},
$$
for $n \in \mathbb{F}_q[t] \backslash \{ 0 \}$ with $\deg n$ sufficiently large.
\end{repproposition}
\begin{proof}
Recall by $\theta \sim \theta'$, we mean that $\theta \cap \theta' \not = \emptyset$
and $\theta \not = \theta'$. Thus if $\theta \sim \theta'$ for $\theta, \theta' \in \mathcal{Q}_n$,
then it follows that $ | \theta \cap \theta' | = 1$, for otherwise $\theta = \theta'$.
Without loss of generality, let the common element be $x_1$.
We bound $\Delta( \mathcal{Q}_n)$ by applying Lemma \ref{basic 1} twice,
\begin{eqnarray}
\Delta(\mathcal{Q}_n)
&=&
\sum_{\stackrel{\theta, \theta' \in \mathcal{Q}_n}{\theta \sim \theta'}} \mathbb{P}(\theta, \theta' \subseteq \omega)
\notag
\\
&\ll&
\sum_{\substack{x_1, x_2,x_3,x'_2,x'_3 \in \mathbb{F}_q[t] \backslash \{ 0 \}\\ x_2+x_3=n-x_1 \\ x'_2+x'_3=n-x_1}}
q^{- \gamma (\deg x_1 + \deg x_2 + \deg x_3 + \deg x'_2 + \deg x'_3)}
\notag
\\
&\ll&
\sum_{ x_1 \in \mathbb{F}_q[t] \backslash \{ 0 \} } q^{- \gamma \deg x_1 }
\left( \sum_{\substack{x_2, x_3 \in \mathbb{F}_q[t] \backslash \{ 0 \}\\ x_2+x_3=n-x_1 }} q^{- \gamma (\deg x_2 + \deg x_3) } \right)^2
\notag
\\
&\ll&
\sum_{ x_1 \in \mathbb{F}_q[t] \backslash \{ 0 \} } q^{- \gamma \deg x_1 }
 q^{ ( 2 - 4 \gamma) \max \{ \deg (n - x_1), 0 \} }
\notag
\\
&\ll&
q^{- \gamma \deg n }
+
\sum_{ x_1 \in \mathbb{F}_q[t] \backslash \{ 0 \} } q^{- \gamma \deg x_1 } q^{(2 - 4 \gamma) \deg (n - x_1) }
\notag
\\
&\ll&
q^{- \gamma \deg n }
+
q^{(3 - 5 \gamma) \deg n}
\notag
\\
&\ll&
q^{(3 - 5 \gamma) \deg n}.
\notag
\end{eqnarray}
Note since $ 0 < 4 \gamma - 2 = 6/11 < 1$ and $5 \gamma - 2 = 13/11 > 1$, the sum after the third last inequality satisfies the
conditions required to apply Lemma \ref{basic 1}.
We also remark that the term $q^{- \gamma \deg n }$ in the third last inequality comes from the case $n - x_1 = 0$.
\end{proof}

\begin{replemma}{exp Tn}
We have that
$$
\mathbb{E}(| \mathcal{T}_n(\omega)|) \ll q^{-\frac{1}{11} \max \{ \deg n, M \} }.
$$
\end{replemma}

\begin{proof}
Recall from Section \ref{sec proof of main 1} that
$$
\mathcal{T}_n = \{ \overline{x} = (x_1, x_2, x_3, x_4, x_5, x_6, x_7,x_8) : \overline{x} \text{ satisfies Cond} (\mathcal{T}_n)  \},
$$
where
\begin{eqnarray}
\text{Cond} (\mathcal{T}_n)
=
\begin{cases}
\{ x_1, x_2, x_3 \} \in \mathcal{Q}_n, \\
x_1 + x_4 = x_5 + x_6 = x_7 + x_8, \ \ \  \{x_1, x_4\} \not =  \{x_5, x_6 \} \not =  \{x_7, x_8 \}, \\
x_1 \equiv x_5 \equiv x_7 \ (\text{mod }n_0), \ x_4 \equiv x_6\equiv x_8 \ (\text{mod }n_0).
\end{cases}
\notag
\end{eqnarray}
Suppose $\overline{x} = (x_1, x_2, x_3, x_4, x_5, x_6, x_7, x_8) \in \mathcal{T}_n$.
Then we know that each element of $\{x_1, x_2, x_3\}$ is
distinct by the definition of $ \mathcal{Q}_n$.
We have that the elements of $\{x_1, x_5, x_7 \}$ are distinct for the following reason.
Without loss of generality, suppose $x_1 = x_5$, then the equation $x_1 + x_4 = x_5 + x_6$ forces $x_4 = x_6$.
Hence, we have $\{x_1, x_4 \} = \{x_5, x_6 \}$, which is a contradiction.
By a similar argument, the elements of $\{x_4, x_6, x_8 \}$ are distinct.
We classify every situation where we have a repeated element amongst $\{ x_i\}_{1 \leq i \leq 8}$ by
considering the following two cases separately, $x_1 \equiv x_4 \ (\text{mod } n_0)$
and  $x_1 \not \equiv x_4 \ (\text{mod } n_0)$.

Case 1 : $x_1 \equiv x_4 \ (\text{mod } n_0)$. If $x_1 \in  \{ x_6, x_8 \}$, then we obtain contradiction
by the following argument.
Without loss of generality, suppose $x_1 = x_6$,
then the equation $x_1 + x_4 = x_5 + x_6$ forces $x_4 = x_5$.
Hence, we have $\{x_1, x_4 \} = \{x_5, x_6 \}$, which is a contradiction.
Thus, it follows that the only possible repetition of $x_1$ is $x_1 = x_4.$
By the definition of $ \mathcal{Q}_n$, we have $x_1 \not \equiv x_2, x_3 \ (\text{mod } n_0)$.
Since $x_1 \equiv x_i \ (\text{mod } n_0)$ for $4 \leq i \leq 8$, we have
$\{x_1, x_4, x_5, x_6, x_7, x_8 \} \cap \{ x_2, x_3 \} = \emptyset$.
We also know that $x_2 \not = x_3$. Therefore, $x_2$ and $x_3$ do not have a repetition.
If $x_4 = x_i$ for $5 \leq i \leq 8$, then the relations
$x_1 + x_4 = x_5 + x_6 = x_7 + x_8$ and $\{x_1, x_4\} \not =  \{x_5, x_6 \} \not =  \{x_7, x_8 \}$
yield contradiction. Thus, $x_4$ has no possible repetition other than $x_1 = x_4$.
We continue in a similar manner and verify that the only remaining possible repetitions are $x_5 = x_6$ and $x_7 = x_8$.
Note the entries of $\overline{x}$ can not have more than one of the three possible repetitions,
because otherwise it violates $\{x_1, x_4\} \not =  \{x_5, x_6 \} \not =  \{x_7, x_8 \}$.
Therefore, the possible subcases stemming from Case 1 are
$\{x_1, x_2, x_3, x_4, x_5, x_6, x_7, x_8 \}$ are distinct, and each of the following,
\begin{eqnarray}
\begin{cases}
\ x_1 = x_4 \text{ and } \{x_1, x_2, x_3, x_5, x_6, x_7, x_8 \} \text{ are distinct},
\notag
\\
\ x_5 = x_6 \text{ and } \{x_1, x_2, x_3, x_4, x_5, x_7, x_8 \} \text{ are distinct},
\notag
\\
\ x_7 = x_8 \text{ and } \{x_1, x_2, x_3, x_4, x_5, x_6, x_7\} \text{ are distinct}.
\notag
\end{cases}
\end{eqnarray}

Case 2 : $x_1 \not \equiv x_4 \ (\text{mod } n_0)$. In this case,
we have $\{x_1, x_5, x_7\} \cap \{x_4, x_6, x_8 \} = \emptyset$, and
consequently,  $\{x_1, x_4, x_5, x_6, x_7, x_8 \}$ are distinct.
We know that $x_1, x_5, x_7 \not \in \{ x_2, x_3 \}$, because they are in
different residue classes modulo $n_0$. Therefore, it follows
that $x_1, x_5, x_7$ do not have any repetitions.
Thus, we deduce that the only possible repetitions are of the form
$x_j \in \{ x_2, x_3 \}$, where $j \in \{ 4, 6, 8 \}$.
Without loss of generality, suppose $x_4 = x_2$. Then since $x_4 = x_2 \not \equiv x_3 \ (\text{mod }n_0)$,
we have that $x_3 \not = x_2, x_4, x_6, x_8$. It follows that
$\{x_1, x_2, x_3, x_5, x_6, x_7, x_8 \}$ are distinct.
We obtain similar conclusions for other cases.
Therefore, the possible subcases stemming from Case 2 are
$\{x_1, x_2, x_3, x_4, x_5, x_6, x_7, x_8 \}$ are distinct, and
each of the following situations, where there exists $j \in \{ 4, 6, 8 \}$ such that $x_j \in \{ x_2, x_3 \}$
and $\{x_1, x_2, x_3, x_4, x_5, x_6, x_7, x_8 \} \backslash \{ x_j \}$ are distinct.

By considering all the subcases stemming from Cases 1 and 2, we can bound $\mathbb{E}(|\mathcal{T}_n(\omega)|)$ by
$$
\mathbb{E}(|\mathcal{T}_n(\omega)|) \ll S_1 + S_2 + S_3 + S_4 + S_5 + S_6,
$$
where
$$
S_1 = \sum_{ \substack{  \deg x_i > M \ (i=1,..., 8)  \\ x_1 + x_2 + x_3 =n \\ x_1 + x_4 = x_5 + x_6 = x_7 + x_8} }
q^{ -\gamma (\deg x_1 + \deg  x_2 + \deg x_3 + \deg x_4 + \deg  x_5 + \deg x_6 + \deg x_7 + \deg x_8)  },
$$
$$
S_2 = \sum_{ \substack{ \deg x_i > M \ (i=1,2,3,5,6,7,8)
\\ x_1 + x_2 + x_3 =n \\ 2 x_1 = x_5 + x_6 =  x_7 + x_8 } }
q^{ -\gamma (\deg x_1 + \deg  x_2 + \deg x_3 + \deg  x_5 + \deg x_6 + \deg x_7 + \deg x_8)  },
$$
$$
S_3 = \sum_{  \substack{ \deg x_i > M \ (i=1,2,3,4,5,7,8)    \\ x_1 + x_2 + x_3 =n  \\ x_1 + x_4 = 2 x_5 = x_7 + x_8 } }
q^{ -\gamma (\deg x_1 + \deg  x_2 + \deg x_3 + \deg x_4 + \deg  x_5 + \deg x_7 + \deg x_8)  },
$$
$$
S_4 = \sum_{  \substack{ \deg x_i > M \ (i=1,2,3,4,5,6,7)    \\ x_1 + x_2 + x_3 =n \\ x_1 + x_4 = x_5 + x_6 = 2 x_7 } }
q^{ -\gamma (\deg x_1 + \deg  x_2 + \deg x_3 + \deg x_4 + \deg  x_5 + \deg x_6 + \deg x_7)  },
$$
$$
S_5 = \sum_{ \substack{ \deg x_i > M \ (i=1,2,3,5,6,7,8) 
\\ x_1 + x_2 + x_3 =n   \\ x_1 + x_2 =  x_5 + x_6 = x_7 + x_8 } }
q^{ -\gamma (\deg x_1 + \deg  x_2 + \deg x_3 + \deg  x_5 + \deg x_6 + \deg x_7 + \deg x_8)  },
$$
and
$$
S_6 = \sum_{ \substack{ \deg x_i > M \ (i=1,2,3,4,5,7,8) 
\\ x_1 + x_2 + x_3 =n   \\ x_1 + x_4 =  x_5 + x_2 = x_7 + x_8 } }
q^{ -\gamma (\deg x_1 + \deg  x_2 + \deg x_3 + \deg  x_4 + \deg x_5 + \deg x_7 + \deg x_8)  }.
$$
We note that $S_1$ corresponds to the subcase when
$\{x_1, x_2, x_3, x_4, x_5, x_6, x_7, x_8 \}$ are distinct,
and $S_2, S_3, S_4$ to the remaining subcases stemming from Case 1.
There are essentially two distinct subcases amongst the remaining subcases
stemming from Case 2. This is because we could relabel $x_2$ and $x_3$ as each other
without affecting the analysis, since they play the same role, and similarly with $x_6$ and $x_8$.
Thus, without loss of generality, it suffices to consider the situations
$x_4 = x_2$ and $x_6 = x_2$, which
correspond to $S_5$ and $S_6$, respectively. 

We can then show that $S_i \ll q^{(-1/11) \max \{ \deg n, M \} }$ for each $1 \leq i \leq 6$ (we can in fact obtain
smaller bounds for $1 < i \leq 6$), from which it follows that
$$
\mathbb{E}(|\mathcal{T}_n(\omega)|)
\ll q^{(-1/11) \max \{ \deg n, M \} }.
$$
In an effort to keep the paper concise, we only give details for the bounds of $S_2$ and $S_6$.
We note that the bound for $S_6$ requires the most calculation of all six.
The bounds for $S_1, S_3, S_4$ and $S_5$ can be achieved in a similar manner as for $S_2$ and $S_6$.

Bound for $S_2$: Since the characteristic of $\mathbb{F}_q$ is not $2$, we have $  \deg  (2 x_1) =  \deg x_1$.
By a repeated application of Lemma \ref{basic 1}, we have
\begin{eqnarray}
\label{S_2 first}
&&
\\
S_2
\notag
&\ll&
\sum_{ \deg x_1 > M } q^{- \gamma \deg x_1} \sum_{\stackrel{ x_2, x_3 \in \mathbb{F}_q[t] \backslash \{ 0 \}}{x_2 + x_3 = n - x_1}}q^{- \gamma (\deg x_2 + \deg x_ 3)}
\left( \sum_{\stackrel{ x_5, x_6 \in \mathbb{F}_q[t] \backslash \{ 0 \} }{x_5 + x_6 = 2 x_1} } q^{- \gamma (\deg x_5 + \deg x_ 6)} \right)^2
\\
&\ll&
\sum_{ \deg x_1 > M } q^{- \gamma \deg x_1 }
q^{  (1 - 2 \gamma) \max\{ \deg (n - x_1), 0 \}  } q^{( 2 - 4 \gamma) \deg (2 x_1)}
\notag
\\
&\ll&
q^{(2 - 5 \gamma ) \max\{ \deg n, M \}} +
\sum_{ \deg x_1 > M } q^{ (2 - 5 \gamma) \deg x_1}
q^{(1 - 2 \gamma) \deg (n - x_1)}.
\notag
\end{eqnarray}
We consider the two cases $\deg n \leq M$ and $\deg n > M$, separately.
Suppose $\deg n \leq M$. Since $7 \gamma - 3 = 16/11 > 1$, we have by
~(\ref{sum bound 1}) that
$$
S_2 \ll q^{(2 - 5 \gamma ) \max\{ \deg n, M \} } + \sum_{ \deg x_1 > M } q^{ (3 - 7 \gamma) \deg x_1}
\ll q^{ (4 - 7 \gamma) M}.
$$

On the other hand, suppose $\deg n > M$. We split and simplify the final sum in ~(\ref{S_2 first}) as follows,
\begin{eqnarray}
\label{S_2}
&&
\sum_{ M < \deg x_1 < \deg n } + \sum_{ \deg x_1 = \deg n }
+ \sum_{ \deg x_1 > \deg n } q^{ (2 - 5 \gamma) \deg x_1}
q^{(1 - 2 \gamma) \deg (n - x_1)}
\\
&\ll&
q^{(1 - 2 \gamma) \deg n } \sum_{ M < \deg x_1 < \deg n } q^{ (2 - 5 \gamma) \deg x_1}
+ q^{ (2 - 5 \gamma) \deg n} \sum_{ \deg x_1 = \deg n }
q^{(1 - 2 \gamma) \deg (n - x_1)}
\notag
\\
&&
\phantom{0000000011111111111111122222222233333333333333333}
+ \sum_{ \deg x_1 > \deg n }
q^{(3 - 7 \gamma) \deg  x_1}
\notag
\\
&\ll&
q^{(1 - 2 \gamma) \deg n } \sum_{ \deg x_1 < \deg n } q^{ (2 - 5 \gamma) \deg x_1}
+ q^{ (2 - 5 \gamma) \deg n} \sum_{ \deg z < 1 + \deg n }
q^{(1 - 2 \gamma) \deg z}
\notag
\\
&&
\phantom{00000000111111111111111222222222333333333333333333}
+ \sum_{ \deg x_1 > \deg n }
q^{(3 - 7 \gamma) \deg  x_1}.
\notag
\end{eqnarray}
Since $2 - 5 \gamma = -13/11 \leq -1$, by a similar calculation
as in ~(\ref{sum bound 2}) we see that
$$
\sum_{ \deg x_1 < \deg n } q^{(2 - 5 \gamma) \deg  x_1} \ll 1.
$$
By applying this estimate to the first sum, ~(\ref{sum bound 2}) to the second sum, and ~(\ref{sum bound 1}) to the third sum in the final expression of ~(\ref{S_2}), we obtain that
$$
S_2 \ll q^{(2 - 5 \gamma) \max\{ \deg n, M \} }+ q^{(1 - 2 \gamma) \deg n } + q^{ (4 - 7 \gamma) \deg n}  \ll q^{ (1 - 2 \gamma) \deg n}
$$
when $\deg n > M$. Therefore, by combining both cases together we obtain that
$$
S_2 \ll q^{(1 - 2 \gamma) \max\{ \deg n, M \} } \ll  q^{ - \frac{1}{11} \max\{ \deg n, M \} }.
$$

Bound for $S_6$: In order to simplify the sum, we make the following substitutions
$x_2 = n - x_3 - x_1$ and $x_5 = 2 x_1 + x_3 + x_4 - n$.
By Lemma \ref{basic 1}, we have
\begin{eqnarray}
\label{baaaaaaaa2}
&&
\\
\notag
S_6
&\ll&
\sum_{x_4 \in \mathbb{F}_q[t] \backslash \{ 0 \}, \deg x_1, \deg x_3 > M } 
q^{- \gamma (\deg x_1 + \deg (n - x_3 - x_1) )}
q^{- \gamma (\deg x_3 + \deg x_4)}
\\
&& \phantom{111112222222333333333333444444}
\cdot
q^{- \gamma \deg (2 x_1 + x_3 + x_4 - n)}
\sum_{\stackrel{ x_7, x_8 \in \mathbb{F}_q[t] \backslash \{ 0 \} } 
{x_7 + x_8 = x_1 + x_4 }} q^{- \gamma (\deg x_7 + \deg x_ 8)}
\notag
\\
&\ll&
\sum_{ x_4 \in \mathbb{F}_q[t] \backslash \{ 0 \} , \deg x_1, \deg x_3 > M } 
q^{- \gamma (\deg x_1 + \deg (n - x_3 - x_1) )}
q^{- \gamma (\deg x_3 + \deg x_4)}
\notag
\\
&& \phantom{11111222222233333333333344444444444}
\cdot
q^{- \gamma \deg (2 x_1 + x_3 + x_4 - n)} q^{( 1 - 2 \gamma) \max\{ \deg (x_1 + x_ 4) , 0 \} }
\notag
\\
&\ll&
\sum_{ \deg x_1, \deg x_3 > M}  
q^{- \gamma (\deg x_1 + \deg (n - x_3 - x_1) )} q^{- \gamma \deg x_3 }
\notag
\\
&& \phantom{1111122222223333333}
\cdot
\sum_{ x_4 \in \mathbb{F}_q[t] \backslash \{ 0 \}}
q^{- \gamma \deg x_4} q^{- \gamma \deg (2 x_1 + x_3 + x_4 - n)} q^{( 1 - 2 \gamma)  \max\{ \deg (x_1 + x_ 4), 0 \}  } .
\notag
\end{eqnarray}

For simplicity, let
$a = 2 x_1 + x_3 - n $. If $a \not =0$, then we simplify the inner sum in the final expression above by Lemma \ref{basic 2},
\begin{eqnarray}
\label{baaaaa}
&&\sum_{ x_4 \in \mathbb{F}_q[t] \backslash \{ 0 \} } 
q^{- \gamma \deg x_4} q^{- \gamma \deg (a + x_4)} q^{( 1 - 2 \gamma) \max\{ \deg (x_1 + x_ 4), 0 \} }
\\
&\ll&
q^{- \gamma \deg x_1} q^{- \gamma \deg (a - x_1)}
+ \sum_{ x_4 \in \mathbb{F}_q[t] \backslash \{ 0 \} } 
q^{- \gamma \deg x_4} q^{- \gamma \deg (a + x_4)} q^{( 1 - 2 \gamma)  \deg (x_1 + x_ 4)  }
\notag
\\
&\ll&
q^{- \gamma \deg x_1} q^{- \gamma \deg (a - x_1)}  +  q^{(1 - 2 \gamma) (\deg a + \deg x_1 )}
\notag
\\
&\ll&
q^{- \gamma \deg x_1}  +  q^{(1 - 2 \gamma) (\deg a + \deg x_1 )}.
\notag
\end{eqnarray}
If $a=0$, then we have
by Lemma \ref{basic 2} that
\begin{eqnarray}
\label{baaaaaaaaaa4}
&&\sum_{ x_4 \in \mathbb{F}_q[t] \backslash \{ 0 \} } 
q^{- \gamma \deg x_4} q^{- \gamma \deg (a + x_4)} q^{( 1 - 2 \gamma) \max\{ \deg (x_1 + x_ 4), 0 \} }
\\
&\ll&
q^{- 2 \gamma \deg x_1}
+ \sum_{ x_4 \in \mathbb{F}_q[t] \backslash \{ 0 \} } 
q^{- \gamma \deg x_4} q^{- \gamma \deg (a + x_4)} q^{( 1 - 2 \gamma)  \deg (x_1 + x_ 4)  }
\notag
\\
&\ll&
q^{- 2 \gamma \deg x_1}
+ q^{ (1 - 2 \gamma) \deg x_1 }
\notag
\\
&\ll&
q^{ (1 - 2 \gamma) \deg x_1 }.
\notag
\end{eqnarray}
By the bounds obtained in ~(\ref{baaaaa}) and ~(\ref{baaaaaaaaaa4}), the change of variable $x_3 = n - x_1 - z$, and Lemma \ref{basic 1}, we obtain from ~(\ref{baaaaaaaa2}) that
\begin{eqnarray}
\label{baa5}
&&
\\
\notag
S_6 &\ll&
\sum_{ x_3 \in \mathbb{F}_q[t] \backslash \{ 0 \}, \deg x_1>M}  
q^{- 2 \gamma \deg x_1}  q^{ - \gamma \deg (n - x_3 - x_1)} q^{- \gamma \deg x_3 }
\\
&& \phantom{11223} + \sum_{x_3 \in \mathbb{F}_q[t] \backslash \{ 0 \}, \deg x_1 > M } q^{- \gamma (\deg x_1 + \deg (n - x_3 - x_1) )} q^{- \gamma \deg x_3 } q^{(1 - 2 \gamma) (\deg a + \deg x_1 )}
\notag
\\
&& \phantom{11223344556677} +
\sum_{ \substack{ \deg x_1, \deg x_3 > M  \\ a = 2 x_1 + x_3 - n  = 0 } }
q^{- \gamma (\deg x_1 + \deg (n - x_3 - x_1) )} q^{- \gamma \deg x_3 } q^{ (1 - 2 \gamma) \deg x_1 }
\notag
\\
&\ll&
\sum_{ \deg x_1>M}
q^{- 2 \gamma \deg x_1}  q^{ (1 - 2 \gamma) \max \{ \deg (n - x_1) , 0  \} }
\notag
\\
&& \phantom{11223} +
\sum_{x_3 \in \mathbb{F}_q[t] \backslash \{ 0 \}, \deg x_1 > M } q^{(1 - 3 \gamma) \deg x_1  - \gamma \deg x_3 - \gamma \deg (n - x_3 - x_1)   + (1 - 2 \gamma) \deg (2 x_1 + x_3 - n ) }
\notag
\\
&& \phantom{1122334455667788999} +
\sum_{ \substack{ \deg x_1, \deg x_3 > M \\   2 x_1 + x_3   = n  } }
q^{- 2 \gamma \deg x_1} q^{- \gamma \deg x_3 } q^{ (1 - 2 \gamma) \deg x_1 }
\notag
\\
&\ll&
q^{- 2 \gamma \max\{ \deg n, M \} }
+
\sum_{ \deg x_1>M}
q^{- 2 \gamma \deg x_1}  q^{ (1 - 2 \gamma) \deg (n - x_1)  }
\notag
\\
&& \phantom{11223} +
\sum_
{\deg x_1 > M } q^{(1 - 3 \gamma) \deg x_1}
\sum_{z \in \mathbb{F}_q[t]  } 
q^{ - \gamma \deg z   - \gamma \deg (n - x_1 - z)   + (1 - 2 \gamma) \deg (x_1 - z) }
\notag
\\
&& \phantom{11223344555667788999000111222} +
\sum_{ \substack{ \deg x_1, \deg x_3 > M \\  2 x_1 + x_3 = n  } }
q^{- \gamma \deg x_3 } q^{ (1 - 4 \gamma) \deg (2 x_1) }.
\notag
\end{eqnarray}
Since $2 \gamma > 9/11$, we bound the first sum in the final expression above by Lemma \ref{basic 1}
as follows,
\begin{eqnarray}
\label{baaaaaaaaa3}
&&
\\
\notag
&&
\sum_{ \deg x_1 > M } q^{- 2 \gamma \deg x_1}
q^{(1 - 2 \gamma) \deg (n - x_1) }
\ll
\sum_{ \deg x_1 > M } q^{- 9/11 \deg x_1}
q^{(1 - 2 \gamma) \deg (n - x_1) }
\ll
q^{-\frac{1}{11} \max\{ \deg n, M \} }.
\end{eqnarray}
Since $1 - 4 \gamma = -17/11 < -5/11$, the third sum can be bounded by Lemma \ref{basic 1} in the following manner,
$$
\sum_{ \substack{ \deg x_1, \deg x_3 > M \\  2 x_1 + x_3 = n  } }
q^{- \gamma \deg x_3 } q^{ (1 - 4 \gamma) \deg (2 x_1) }
\ll
\sum_{ \substack{ \deg x_1, \deg x_3 > M \\  2 x_1 + x_3 = n  } }
q^{- \frac{7}{11} \deg x_3 } q^{ - \frac{5}{11} \deg (2 x_1) }
\ll
q^{- \frac{1}{11} \max \{ \deg n, M \}}.
$$
Therefore, by applying Lemma \ref{basic 2} to the inner sum of the remaining second sum,
we see that the final expression of ~(\ref{baa5}) can be bounded by
\begin{eqnarray}
S_6 &\ll& q^{- \frac{1}{11} \max \{ \deg n, M \}} +
\sum_{ \deg x_1 > M, n = x_1}
q^{(1 - 3 \gamma) \deg x_1}
q^{(1  - 2 \gamma )  \deg ( - x_1 ) }
\notag
\\
&& \phantom{112223334445566777778} + \sum_{ \deg x_1 > M} 
q^{(1 - 3 \gamma) \deg x_1}
q^{(1  - 2 \gamma ) (\deg (x_1 - n) + \deg ( - x_1 ) )}
\notag
\\
&\ll&
 q^{- \frac{1}{11} \max \{ \deg n, M \}} + q^{ (2 - 5 \gamma) \max \{ \deg n, M \}} +
\sum_{ \deg x_1 > M } 
q^{(2 - 5 \gamma) \deg x_1}
q^{(1  - 2 \gamma ) \deg (n - x_1)}.
\notag
\end{eqnarray}
Notice that the sum in the final estimate obtained above is the same as the sum in the estimate for $S_2$
in ~(\ref{S_2 first}). Therefore, we have by the work done to bound $S_2$ that
\begin{eqnarray}
S_6 \ll  q^{- \frac{1}{11} \max \{ \deg n, M \}} +  q^{(1 - 2 \gamma) \max \{ \deg n, M \} } \ll  q^{- \frac{1}{11} \max \{ \deg n, M \}}.
\notag
\end{eqnarray}

\end{proof}

\begin{replemma}{family bound}
We have the following bounds on the expectations.
\newline
\newline
i) $\mathbb{E}(|  \mathcal{U}_{r}(\omega)|) \ll q^{-\frac{3}{11} \max \{ \deg r, M \} }$.
\newline
ii) $\mathbb{E}(| \mathcal{V}_{r}(\omega)|) \ll q^{-\frac{3}{11} \max \{ \deg r, M \} }$.
\newline
iii) $\mathbb{E}(| \mathcal{W}_{r}(\omega)|) \ll q^{ - \frac{2}{11} \max \{ \deg r, M \} }$. 
\end{replemma}
\begin{proof}
By Lemma \ref{basic 1}, we obtain that
$$
\mathbb{E}(|  \mathcal{U}_{r}(\omega)|) \leq \sum_{\stackrel{\deg x,\deg y> M}{x+y=r}} q^{- \gamma \deg x} q^{- \gamma \deg y}
\ll q^{(1 - 2 \gamma) \max \{ \deg r, M \} } \ll q^{-\frac{3}{11} \max \{ \deg r, M \} },
$$
and
$$
\mathbb{E}(| \mathcal{V}_{r}(\omega)|) \leq \sum_{\stackrel{\deg x,\deg y> M}{x-y=r}} q^{- \gamma \deg x} q^{- \gamma \deg y}
\ll q^{(1 - 2 \gamma) \max \{ \deg r, M \} } \ll q^{-\frac{3}{11} \max \{ \deg r, M \} }.
$$
Similarly, we have by Lemma \ref{basic 1} that
\begin{eqnarray}
\label{W sum}
\mathbb{E}(| \mathcal{W}_{r}(\omega)|) &\leq& \sum_{ \stackrel{ \deg x_i > M \ (i = 4,5, 6, 7, 8), } 
{x_5 + x_6 = x_7 + x_8 =  r + x_4 }  } q^{- \gamma \deg x_4}q^{- \gamma \deg x_5}q^{- \gamma \deg x_6}q^{- \gamma \deg x_7}q^{- \gamma \deg x_8}
\\
&\leq&
\sum_{ \deg x_4 > M } q^{- \gamma \deg x_4}
\left(\sum_{\stackrel{\deg x_5, \deg x_6 > M} {x_5 + x_6 = r + x_4}  } q^{- \gamma \deg x_5}q^{- \gamma \deg x_6} \right)^2.
\notag
\\
&\ll&
\sum_{ \deg x_4 > M } q^{- \gamma \deg x_4}
q^{(2 - 4 \gamma) \max \{ \deg (r + x_4), M \} } .
\notag
\end{eqnarray}
With our choice of $\gamma = 7/11$, we have $0 < 4 \gamma - 2 < 1$ and
$5 \gamma -2 > 1$.
Therefore, by Lemma \ref{basic 3},
we obtain that
$$
\mathbb{E}(| \mathcal{W}_{r}(\omega)|) \ll q^{(3 - 5\gamma) \max\{ \deg r, M \} }.
$$
\end{proof}

\section{Estimates in Theorem \ref{main 2}
}
\label{calculations 2}
Recall in Section \ref{sec proof of main 2}, we work in the probability space $S_M(\gamma, S(\text{mod } n_0))$,
where $\gamma = \frac{2}{3} + \frac{\varepsilon}{9 + 9 \varepsilon}$ for some $\varepsilon > 0$ sufficiently small,
and $S$ is a non-empty subset of $\mathbb{G}_N$
satisfying the conditions of Corollary \ref{cor to thm}.
Since the computation in this section is similar to that of Section \ref{calculations 1},
we omit some of the details.

\begin{replemma}{E R_n}
We have that
$$
\mathbb{E}(| \mathcal{R}_n(\omega)|) \gg q^{ \frac{2 \varepsilon^2}{9 + 9 \varepsilon} \deg n},
$$
for $n \in \mathbb{F}_q[t] \backslash \{ 0 \}$ with $\deg n$ sufficiently large.
\end{replemma}
\begin{proof}
By the definition of $ \mathcal{R}_n(\omega)$, we have
$$
\mathbb{E}(| \mathcal{R}_n(\omega)|) = \sum_{ \{ x_1, x_2, x_3, x_4 \} \in \mathcal{R}_n } \mathbb{P}(x_1, x_2, x_3, x_4 \in \omega)
\geq q^{ - (3 + \varepsilon) \gamma \deg n} |R'_n|,
$$
where
\begin{eqnarray}
\mathcal{R}'_n = \{ \{ x_1, x_2, x_3, x_4 \} \in \mathcal{R}_n : x_i \equiv S \ (\text{mod } n_0),
\deg x_i = \deg n  > M \ (1 \leq i \leq 3),
\notag
\\
M < \deg x_4 \leq \varepsilon \deg n \}.
\notag
\end{eqnarray}
By our choice of $S$, we know there exist distinct $s_1, s_2, s_3, s_4$ such that
$n \equiv s_1 + s_2 + s_3 + s_4\ (\text{mod } n_0)$. We fix such  $s_1, s_2, s_3, s_4,$
and write $x_i = s_i + n_0 y_i$. Let $l \in \mathbb{F}_q[t]$ be the polynomial such that
$n - s_1 - s_2 - s_3 - s_4 = l n_0$.
Then we have $|\mathcal{R}'_n| \geq |\mathcal{R}^*_n|$, where
\begin{eqnarray}
\mathcal{R}^*_n = \{ \{ y_1, y_2, y_3, y_4 \} : y_1 + y_2 + y_3 + y_4 = l,
\deg y_i = \deg n - \deg n_0 \ (1 \leq i \leq 3),
\notag
\\
\deg y_4 \leq \varepsilon \deg n - \deg n_0 \}.
\notag
\end{eqnarray}
For each $y_4$ with $\deg y_4 \leq \varepsilon \deg n - \deg n_0$,
we can give a lower bound to the number of polynomials $y_1, y_2, y_3$ of degree $\deg n - \deg n_0$ such that $y_1 +  y_2 + y_3 = l - y_4$
in a similar manner as in Lemma \ref{E Q_n} or \cite[Lemma 6.7]{C}. Therefore, we obtain that
$$
|\mathcal{R}^*_n| \gg (q^2)^{\deg y_i} q^{ \varepsilon (\deg n - \deg n_0)} \gg q^{(2 + \varepsilon) \deg n}.
$$
Thus, we have our result
$$
\mathbb{E}(|\mathcal{R}_n(\omega)|) \geq q^{ - (3 + \varepsilon) \gamma \deg n} |\mathcal{R}'_n|
\gg q^{  \left( - \frac{\varepsilon}{3 + 3 \varepsilon} - \varepsilon \gamma + \varepsilon  \right) \deg n }
\gg q^{  \frac{2 \varepsilon^2 }{9 + 9 \varepsilon}  \deg n }.
$$
\end{proof}

\begin{repproposition}{Delta R_n}
We have that
$$
\Delta( \mathcal{R}_n) \ll q^{\frac{- 3 \varepsilon + 2 \varepsilon^2}{9 + 9 \varepsilon} \deg n},
$$
for $n \in \mathbb{F}_q[t] \backslash \{ 0 \}$ with $\deg n$ sufficiently large.
\end{repproposition}
\begin{proof}
Recall
$$
\Delta( \mathcal{R}_n) = \sum_{ \stackrel{\theta, \theta' \in \mathcal{R}_n}{\theta \sim \theta'}} \mathbb{P}(\theta, \theta' \subseteq \omega),
$$
where by $\theta \sim \theta'$ we mean $\theta \cap \theta' \not = \emptyset$
and $\theta \not = \theta'$. By the definition of $\mathcal{R}_n$, it is clear that if
$\theta \sim \theta'$, then $| \theta \cap \theta' | = 1$ or $2$.
We split $\Delta(\mathcal{R}_n)$ into several sums according to $\theta \cap \theta'$ in order to estimate it. 
We let $\theta = \{ x_1, x_2, x_3, x_4 \}$ and $\theta' = \{ x'_1, x'_2, x'_3, x'_4 \}$. 

Case 1.  $ \theta \cap \theta'  = \{ x_i \}$, where $\deg x_i \leq \varepsilon \deg n$.
Without loss of generality, let $i=1$.
Note $0 < \gamma < 1$ and $2 \gamma > 1$. We have by Lemmas \ref{basic 1} and \ref{basic 3} that
\begin{eqnarray}
L_1
&=&\sum_{\substack{ \deg x_i, \deg x'_j > M \ (1 \leq i \leq 4, 1\leq j \leq 3)
\\ x_1 + x_2 + x_3 + x_4 = n \\
x_1 + x'_2 + x'_3 + x'_4 = n \\ \deg x_1 \leq \varepsilon \deg n }}
q^{- \gamma (\deg x_1 + \deg x_2 + \deg x_3 + \deg x_4+ \deg x'_2 + \deg x'_3 + \deg x'_4)}
\notag
\\
\notag
\\
&\ll&
 q ^{( 4- 6 \gamma) \deg n} q^{(1- \gamma) \varepsilon \deg n }.
\notag
\end{eqnarray}

Case 2.  $ \theta \cap \theta'  = \{ x_j \}$,
where $\deg x_j > \varepsilon \deg n$.
Without loss of generality, let $j = 2$.
We have by Lemma \ref{basic 1} that
\begin{eqnarray}
\label{second delta case 2}
&&
\\
L_2 &=& \sum_{\substack{
\deg x_i, \deg x'_j > M \ (1 \leq i \leq 4, j=1,3,4) \\
\deg x_1, \deg x'_1 \leq \varepsilon \deg n  \\
x_3 + x_4 = n - x_1 - x_2\\ x'_3 + x'_4 = n - x'_1 - x_2 }}
q^{- \gamma (\deg x_1 + \deg x_2 + \deg x_3 + \deg x_4+ \deg x'_1 + \deg x'_3 + \deg x'_4)}
\notag
\\
&\ll&
q^{ ( (3 - 5 \gamma) + (2 - 2 \gamma ) \varepsilon ) \deg n }.
\notag
\end{eqnarray}

Case 3.  $ \theta \cap \theta'  = \{ x_i, x_j \}$, where $\deg x_i \leq \varepsilon \deg n$. Without loss of generality,
let $i = 1$ and $j = 2$.
Note we have $2 \gamma > 1$, $0 < 4 \gamma - 2 < 1$, and $5 \gamma -2 > 1$.
Thus, we obtain by Lemmas \ref{basic 1} and \ref{basic 3} that
\begin{eqnarray}
L_3 &=& \sum_{\substack{ \deg x_i, \deg x'_j > M \ (1 \leq i \leq 4, j = 3,4)
\\ x_1 + x_2 + x_3 + x_4 = n \\
x_1 + x_2 + x'_3 + x'_4 = n \\ \deg x_1 \leq \varepsilon \deg n }}
q^{- \gamma (\deg x_1 + \deg x_2 + \deg x_3 + \deg x_4 + \deg x'_3 + \deg x'_4)}
\notag
\\
&\ll&
q^{( 3 - 5 \gamma)  \deg n + ( 1 - \gamma)  \varepsilon \deg n }.
\notag
\end{eqnarray}

Case 4.  $ \theta \cap \theta'  = \{ x_j, x_k \}$, where $\deg x_j, \deg x_k > \varepsilon \deg n$. Without loss of generality,
let $j=2$ and $k=3$.
We obtain by Lemmas \ref{basic 1} and \ref{basic 3} that
\begin{eqnarray}
\label{second delta case 4}
&&
\\
\notag
L_4 &=&\sum_{\substack{ \deg x_i, \deg x'_j > M \ (1 \leq i \leq 4, j = 1,4)
\\ x_1 + x_2 + x_3 + x_4 = n \\
x'_1 + x_2 + x_3 + x'_4 = n \\ \deg x_1, \deg x'_1 \leq \varepsilon \deg n } }
q^{- \gamma (\deg x_1 + \deg x_2 + \deg x_3 + \deg x_4 + \deg x'_1 + \deg x'_4)}
\\
&\ll&
q^{ (2 - 2 \gamma) \varepsilon \deg n } q^{(1 - 2 \gamma) \deg n }.
\notag
\end{eqnarray}

Combining all the bounds computed for $L_1, L_2, L_3, L_4$, we obtain that
\begin{eqnarray}
\Delta( \mathcal{R}_n) &\ll&
L_1 + L_2 + L_3 + L_4
\notag
\\
&\ll&
q ^{( 4- 6 \gamma) \deg n} q^{(1- \gamma) \varepsilon \deg n }.
\notag
\end{eqnarray}
Note with our choice of $\gamma = \frac{2}{3} + \frac{\varepsilon}{9 + 9 \varepsilon}$, we have
$$
( 4 - 6 \gamma) + (1- \gamma) \varepsilon
=
- \frac{6 \varepsilon}{9 + 9 \varepsilon} + \left( \frac{1}{3} - \frac{ \varepsilon}{9 + 9 \varepsilon} \right) \varepsilon =
\frac{- 3 \varepsilon + 2 \varepsilon ^2 }{9 + 9 \varepsilon}.
$$
\end{proof}

\begin{replemma}{exp Bn}
We have that
$$
\mathbb{E}(| \mathcal{B}_n(\omega)|) \ll q^{ - \frac{ \varepsilon^2}{18} \max \{ \deg n, M \} }.
$$
\end{replemma}

\begin{proof}
Recall from Section \ref{sec proof of main 2} that
$$
\mathcal{B}_n = \{ \overline{x} = (x_1, x_2, x_3, x_4, x_5, x_6, x_7 ) : \overline{x} \text{ satisfies Cond} (\mathcal{B}_n)  \},
$$
where
\begin{eqnarray}
\text{Cond} (\mathcal{B}_n)
=
\begin{cases}
\{ x_1, x_2, x_3, x_4 \} \in \mathcal{R}_n \\
x_1 + x_5 = x_6 + x_7, \ \ \  \{x_1, x_5\} \not =  \{x_6, x_7 \} \\
x_1 \equiv x_6  \ (\text{mod } n_0), \ x_5 \equiv x_7 \ (\text{mod }n_0).
\end{cases}
\notag
\end{eqnarray}
If $\varepsilon \deg n \leq M$, then we have $\mathbb{E}(|\mathcal{B}_n(\omega)|) = 0$.
Therefore, we can bound $\mathbb{E}(| \mathcal{B}_n(\omega)|) \ll q^{- \frac{ \varepsilon^2}{18} M }$
if $\deg n \leq M < M / \varepsilon$, and $\mathbb{E}(|\mathcal{B}_n(\omega)|) \ll q^{- \frac{ \varepsilon^2}{18} \deg n}$
if $M < \deg n \leq M / \varepsilon$. Thus we suppose $\varepsilon \deg n > M$ for the remainder of the proof.

Let $(x_1, x_2, x_3, x_4, x_5, x_6, x_7) \in \mathcal{B}_n$. By arguing in a similar manner as in the
proof of Lemma \ref{exp Tn} or \cite[Lemma 6.8]{C}, we can verify that the following five situations are the only cases we need to consider.
\newline

Case 1: all the $x_i$'s are distinct.

Case 2: $x_6 = x_7$ and $\{ x_1, ..., x_6 \}$ are distinct.

Case 3: $x_5 = x_1$ and $\{ x_1, x_2, x_3, x_4, x_6, x_7\}$ are distinct.

Case 4: $x_5 \in \{ x_2, x_3, x_4\}$ and $\{ x_1, x_2, x_3, x_4, x_6, x_7\}$ are distinct.

Case 5: $x_7 \in \{ x_2, x_3, x_4\}$ and $\{ x_1, x_2, x_3, x_4, x_5, x_6\}$ are distinct.
\newline
\newline
By considering all of the five cases above, we obtain that $\mathbb{E}(|\mathcal{B}_n(\omega)|)$
can be bounded by
$$
\mathbb{E}(|\mathcal{B}_n(\omega)|) \ll S_1 + S_2 + S_3 + S_4 + S_5,
$$
where
$$
S_1 = \sum_{ \substack{ \deg x_i > M \ (1 \leq i \leq 7)
\\ x_1 + x_2 + x_3 + x_4 =n \\ x_1 + x_5 = x_6 + x_7
\\ \min_{1 \leq i \leq 4} \{ \deg x_i \} \leq \varepsilon \deg n  } }
q^{-\gamma (\deg x_1 + \deg x_2 +\deg x_3 +\deg x_4 +\deg x_5 +\deg x_6 +\deg x_7)},
$$
$$
S_2 = \sum_{ \substack{ \deg x_i > M \ (1 \leq i \leq 6)
\\ x_1 + x_2 + x_3 + x_4 =n \\ x_1 + x_5 = 2 x_6
\\ \min_{1 \leq i \leq 4} \{ \deg x_i \} \leq \varepsilon \deg n  } }
q^{-\gamma (\deg x_1 + \deg x_2 +\deg x_3 +\deg x_4 +\deg x_5 +\deg x_6 )},
$$
$$
S_3 = \sum_{ \substack{ \deg x_i > M \ (i = 1,2,3,4,6,7)
\\ x_1 + x_2 + x_3 + x_4 =n \\ 2 x_1 = x_6 + x_7
\\ \min_{1 \leq i \leq 4} \{ \deg x_i \} \leq \varepsilon \deg n  } }
q^{-\gamma (\deg x_1 + \deg x_2 +\deg x_3 +\deg x_4  +\deg x_6 +\deg x_7)},
$$
$$
S_4 = \sum_{ \substack{
\deg x_i > M \ (i=1,2,3,4,6,7)
\\ x_1 + x_2 + x_3 + x_4 =n \\ x_1 + x_2 = x_6 + x_7
\\ \min_{1 \leq i \leq 4} \{ \deg x_i \} \leq \varepsilon \deg n  } }
q^{-\gamma (\deg x_1 + \deg x_2 +\deg x_3 +\deg x_4  +\deg x_6 +\deg x_7)},
$$
and
$$
S_5 = \sum_{ \substack{ \deg x_i > M \ (1 \leq i \leq 6)
\\ x_1 + x_2 + x_3 + x_4 =n \\ x_1 + x_5 = x_6 + x_2
\\ \min_{1 \leq i \leq 4} \{ \deg x_i \} \leq \varepsilon \deg n  } }
q^{-\gamma (\deg x_1 + \deg x_2 +\deg x_3 +\deg x_4 +\deg x_5 +\deg x_6 )}.
$$
Note $S_1$, $S_2$, $S_3$, $S_4$, and $S_5$ correspond to Cases 1, 2, 3, 4, and 5, respectively.

By computing the bounds for each  $S_1, ..., S_5$ in a similar manner as in Lemma \ref{exp Tn} and \cite[Lemma 6.8]{C}, 
we obtain
\begin{eqnarray}
\mathbb{E}(|\mathcal{B}_n(\omega)|) &\ll& S_1 + S_2+ S_3 + S_4 + S_5
\notag
\\
&\ll& q^{- \frac{\varepsilon^2}{18} \deg n }
\notag
\end{eqnarray}
when $\varepsilon \deg n > M.$
\end{proof}

\small


\end{document}